\makeatletter \@addtoreset{equation}{section} \makeatother
\numberwithin{equation}{section}
\newtheorem{theorem}{Theorem}[section]
\newtheorem{lemma}[theorem]{Lemma}
\newtheorem{remark}[theorem]{Remark}
\numberwithin{equation}{section}
\begin{document}

\title[Kirchhoff type elliptic system]
{Existence and multiplicity of solutions to a Kirchhoff type elliptic system with Trudinger-Moser growth}

\author[S. Deng]{Shengbing Deng}
\address{\noindent S. Deng-School of Mathematics and Statistics, Southwest University,
Chongqing 400715, People's Republic of China}\email{shbdeng@swu.edu.cn}

\author[X. Tian]{Xingliang Tian}
\address{\noindent X. Tian-School of Mathematics and Statistics, Southwest University,
Chongqing 400715, People's Republic of China.}\email{xltianswumaths@163.com}

\maketitle

\allowdisplaybreaks

\noindent {\bf Abstract}:
This paper deals with the existence and multiplicity of solutions for a class of Kirchhoff type elliptic systems involving nonlinearities with Trudinger-Moser exponential growth. We first study the existence of solutions for the following system:
\begin{eqnarray*}
    \left\{ \arraycolsep=1.5pt
       \begin{array}{ll}
        -\big(a_1+b_1\|u\|^{2(\theta_1-1)}\big)\Delta u= \lambda H_u(x,u,v)\ \ \ &\ \mbox{in}\ \ \ \Omega,\\[2mm]
        -\big(a_2+b_2\|v\|^{2(\theta_2-1)}\big)\Delta v= \lambda H_v(x,u,v)\ \ \ &\ \mbox{in}\ \ \ \Omega,\\[2mm]
        u=0, v=0\ \ \ \ &\ \mbox{on}\ \ \ \partial\Omega,
        \end{array}
    \right.
    \end{eqnarray*}
    where $\Omega$ is a bounded domain in $\mathbb{R}^2$ with smooth boundary,\ $\|w\|=\big(\int_{\Omega}|\nabla w|^2dx\big)^{1/2}$, $H_u(x,u,v)$ and $H_v(x,u,v)$ behave like $e^{\beta |(u,v)|^2}$ when $|(u,v)|\rightarrow \infty$ for some $\beta>0$, $a_1, a_2>0$, $b_1, b_2> 0$, $\theta_1, \theta_2> 1$ and $\lambda$ is a positive parameter.
    In the later part of the paper, we also discuss a  new multiplicity result for the above system with a positive parameter induced by the nonlocal dependence. The Kirchhoff term and the lack of compactness of the associated energy functional due to the Trudinger-Moser embedding have to be overcome  via some new techniques.

\vspace{3mm} \noindent {\bf Keywords:} Kirchhoff type elliptic systems; existence of solutions; multiplicity of solutions; Trudinger-Moser inequality.

\noindent \emph{\bf 2010 Mathematics Subject Classification:} 35J50, 35J57.
 \maketitle

\section{{\bfseries Introduction}}

    In this paper, we consider a class of systems of the form
    \begin{eqnarray}\label{P}
    \left\{ \arraycolsep=1.5pt
       \begin{array}{ll}
        -\big(a_1+b_1\|u\|^{2(\theta_1-1)}\big)\Delta u= \lambda H_u(x,u,v)\ \ \ &\ \mbox{in}\ \ \ \Omega,\\[2mm]
        -\big(a_2+b_2\|v\|^{2(\theta_2-1)}\big)\Delta v= \lambda H_v(x,u,v)\ \ \ &\ \mbox{in}\ \ \ \Omega,\\[2mm]
        u=0, v=0\ \ \ \ &\ \mbox{on}\ \ \ \partial\Omega,
        \end{array}
    \right.
    \end{eqnarray}
    where $\Omega$ is a bounded domain in $\mathbb{R}^2$ with smooth boundary,\ $\|w\|=\big(\int_{\Omega}|\nabla w|^2dx\big)^{1/2}$, $a_1, a_2>0$, $b_1, b_2> 0$, $\theta_1, \theta_2>1$ and $\lambda$ is a positive parameter. $H_u$ and $H_v$ have the maximal growth which allows treating (\ref{P}) variationally in the Sobolev space $H^1_0(\Omega,\mathbb{R}^2)$. Here $H^1_0(\Omega,\mathbb{R}^2)$ denotes the Sobolev space modeled in $L^2(\Omega,\mathbb{R}^2)$ with the scalar product
    \begin{equation*}\
    \langle W,\Phi\rangle=\int_{\Omega}\nabla u \cdot \nabla \varphi dx+\int_{\Omega}\nabla v \cdot \nabla \psi dx,
    \end{equation*}
    where $W=(u,v)$ and $\Phi=(\varphi,\psi)$, to which corresponds the norm $\|W\|=\langle W,W\rangle^{1/2}$. We shall consider the variational situation in which
    \begin{equation*}\
    \big(H_u(x,W),H_v(x,W)\big)=\nabla H(x,W)
    \end{equation*}
    for a function $H:\Omega \times \mathbb{R}^2 \rightarrow \mathbb{R}$ of class $C^1$, where $\nabla H$ stands for the gradient of $H$ in the variables $W=(u,v)\in \mathbb{R}^2$.

We mention that a great attention has been focused on the study of problems involving exponential growth nonlinearities, which are related to the famous Trudinger-Moser inequality. Let $\Omega$ be a bounded domain in $\mathbb{R}^2$, and denote by $H_0^{1}(\Omega)$ the standard first order Sobolev space given by
\[
H_0^{1}(\Omega)=cl\Big\{u\in C^\infty_0(\Omega)\ :\ \int_\Omega|\nabla u|^2dx<\infty\Big\},
\]
with the inner product $\langle u,v \rangle=\int_{\Omega}\nabla u \cdot \nabla v dx$ and the norm $\|u\| =\langle u,u \rangle^{1/2}$.  This space is a limiting case for the Sobolev embedding theorem, which yields $H_0^{1}(\Omega)\hookrightarrow L^p(\Omega)$ for all $1\leq p<\infty$, but one knows by easy examples that $H_0^{1}(\Omega)\not\subseteq L^\infty(\Omega)$. Hence, one is led to look for a function $g:\mathbb{R}\to\mathbb{R}^+$ with maximal growth such that
\[
\sup\limits_{u\in H_0^{1}(\Omega),\|u\| \leq 1}\int_\Omega g(u)dx<\infty.
\]
It was shown that by Trudinger \cite{Trudinger} and Moser \cite{m} that the maximal growth is of exponential type. More precisely, $\exp(\beta u^{2})\in L^1(\Omega),\ \forall\ u\in H_0^{1}(\Omega),\ \forall\ \beta>0$, and
\begin{align}\label{tmi}
\sup\limits_{u\in H_0^{1}(\Omega), \|u\| \leq 1}\int_\Omega \exp(\beta u^{2})dx< \infty,\quad \mbox{if}\  \beta\leq 4\pi,
\end{align}
where $4\pi$ is the sharp constant in the sense that the supreme in the left is $+\infty$ if  $\beta >4\pi$.
Therefore, from this result they have naturally associated notions of criticality and
subcriticality, namely, they say that a function $g: \Omega\times\mathbb{R}\to\mathbb{R}$ has subcritical growth if
\[
\lim _{|u| \rightarrow \infty} \frac{|g(x, u)|}{\exp \left(\beta u^{2}\right)}=0, \text{uniformly on } x\in\Omega,\  \forall \beta>0,
\]
and $g$ has critical growth on $\Omega$ if there exists $\beta_0>0$ such that
\begin{equation*}
    \lim _{|u| \rightarrow \infty} \frac{|g(x, u)|}{\exp \left(\beta u^{2}\right)}
    =  \begin{cases}
     0,
     & \text{uniformly on }x\in\Omega,\ \forall\beta >\beta_0,\\[3pt]
     \infty,
     & \text{uniformly on }x\in\Omega,\ \forall\beta <\beta_0.
    \end{cases}
    \end{equation*}
We should stress that some results about the existence of solutions for elliptic problems involving exponential nonlinearities has been obtained by Adimurthi, de Figueiredo-do \'{O}-Ruf, de Figueiredo-Miyagaki-Ruf, do \'{O}, Lam-Lu, etc \cite{a,ddr,dmr1,do,lamlu1,lamlu2,lamlu3,lamlu4} and the references therein.
Moreover, de Souza \cite{ds} has studied the existence of solutions for a singular class of elliptic systems involving critical exponential growth in $\mathbb{R}^2$.

On the other hand,  system (\ref{P}) is related to the stationary analogue of the equation
\begin{eqnarray}\label{kirch}
u_{tt}-\left(a+b\int_\Omega|\nabla u|^2dx\right)\triangle u=g(x,u)
\end{eqnarray}
proposed by Kirchhoff \cite{kir} as an extension of the classical D'Alembert's wave equation for free
vibrations of elastic strings. Kirchhoff's model takes into account the changes in length of the
string produced by transverse vibrations. Equation (\ref{kirch}) received much attention only after Lions \cite{lions} proposed an abstract framework to the problem.
Equation (\ref{kirch}) is called nonlocal because of the term  $a+b\int_\Omega|\nabla u|^2dx$ which implies that
the equation in (\ref{kirch}) is no longer a pointwise identity. The presence
of the term $a+b\int_\Omega|\nabla u|^2dx$ provokes some mathematical difficulties which makes the study
of such a class of problems particularly interesting.
We refer to \cite{alves,chen,cy2,chengwu,fiscellaValdinoci,fs,hezou,hezou2,lilishi,nt,pereea1} for more existence results of the Kirchhoff type equations.
In particular, let $\Omega\subset \mathbb{R}^2$ be a bounded domain, Figueiredo and Severo \cite{fs} studied the existence of positive ground state solutions for a nonlocal Kirchhoff problem of the type
    \begin{eqnarray*}
    \left\{ \arraycolsep=1.5pt
       \begin{array}{ll}
        -m\left(\int_\Omega|\nabla u|^2 dx\right)\Delta u=f(x,u),\ \ & \mbox{in}\ \ \Omega, \\[2mm]
        u=0,\ \ & \mbox{on}\ \ \partial\Omega,
        \end{array}
    \right.
    \end{eqnarray*}
    where $m$ is a Kichhoff type function. Naimen and Tarsi  \cite{nt}  studied the multiple solutions for the following problem
    \begin{eqnarray*}
    \left\{ \arraycolsep=1.5pt
       \begin{array}{ll}
        -\big(1+b\int_{\Omega}|\nabla u|^2 dx\big)\Delta u=\lambda f(x,u),\ u\geq 0\ \ & \mbox{in}\ \ \Omega, \\[2mm]
        u=0,\ \ & \mbox{on}\ \ \partial\Omega,
        \end{array}
    \right.
    \end{eqnarray*}
where $b,\ \lambda$ are positive parameters, and the authors adjusted $b$ and $\lambda$ to get the multiple solutions. Arora etc. \cite{AGMS20} studied a system of Kichhoff equations with exponential nonlinearity of Choquard type, and de Albuquerque etc. \cite{ddds21} have studied a class of Kirchhoff systems involving critical growth in the whole $\mathbb{R}^2$. Recently, mainly inspired by \cite{ds,fs,nt}, we have studied system (\ref{P}) on Heisenberg group for some special Kirchhoff functions \cite{dt21}, and see also \cite{dt22,Ti22} for system (\ref{P}) with $\lambda=1$ and abstract Kichhoff type functions.


Motivated by the above results, in the present paper, we will first prove the existence of solutions for a Kirchhoff type elliptic system (\ref{P}), and we will treat both the subcritical case and the critical case, which
we define next. Let $h(x,W)$ be a function in $\overline{\Omega} \times\mathbb{R}^2$. We say that

$(TM)^{sc}$:
    $h$ has subcritical growth at $\infty$ if
    \begin{equation*}
    \lim_{|W|\to\infty}\frac{|h(x,W)|}{\exp(\beta |W|^2)}=0,
     \ \ \text{uniformly on }x\in\Omega,\ \forall\beta >0.
    \end{equation*}

$(TM)^c$:
    $h$ has critical growth at $\infty$ if there exists $\beta_0 >0$ such that
    \begin{equation*}
    \lim_{|W|\to\infty}\frac{|h(x,W)|}{\exp(\beta |W|^2)}
    =  \begin{cases}
     0,
     & \text{uniformly on }x\in\Omega,\ \forall\beta >\beta_0,\\[3pt]
     \infty,
     & \text{uniformly on }x\in\Omega,\ \forall\beta <\beta_0.
    \end{cases}
    \end{equation*}

    Let us introduce the precise assumptions under which our problem is studied. Set
     \begin{align*}
     \theta_{max}= & \max\{\theta_1,\theta_2\},\ \theta_{min}=\min\{\theta_1,\theta_2\}>1,\\
     a_{max}= & \max\{a_1,a_2\}, \ a_{min}=\min\{a_1,a_2\}>0,\\
     b_{max}= & \max\{b_1,b_2\}, \ b_{min}=\min\{b_1,b_2\}>0.
    \end{align*}
     We give some assumptions on  $H$ as follows:
    \begin{itemize}
    \item[($H_1$):]
    $H(x,\cdot,\cdot)\in C^1(\mathbb{R}^2)$, $H(x,W)\geq 0$, $uH_u(x,W)\geq 0$, $vH_v(x,W)\geq 0$ for all $W=(u,v)\in \mathbb{R}^2$, uniformly on $x\in \Omega$.

    \item[($H_2$):]
    there exist constants $S_0,\ M_0>0$ such that

     $0<H(x,W)\leq M_0|\nabla H(x,W)|$, \ for all \ $|W|\geq S_0$ uniformly on $x\in\Omega$.

    \item[($\overline{H_2}$):]
    there exist $\mu\geq 2\theta_{max}$ and $R>0$  such that
    $0<\mu H(x,W)\leq W\cdot\nabla H(x,W)$, for all $(x,|W|)\in \Omega\times [R,+\infty)$.

    \item[($SL$):]
    $\limsup_{|W|\rightarrow 0} \frac {H(x,W)}{|W|^2}=0$ uniformly on $x\in\Omega$; superlinearity at 0.

    \item[($AL$):]
    $\limsup_{|W|\rightarrow 0} \frac {H(x,W)}{|W|^2}=\frac{a_{min}}{2}$ uniformly on $x\in\Omega$; asymptotically linearity at 0.
    \end{itemize}

    \begin{remark}\label{remfc}\rm
    We observe that condition $(H_2)$ implies that for each $\xi>0$, there exists $R_\xi>0$ such that
    \begin{equation}\label{rf3ar}
    0<\xi H(x,W)\leq |W||\nabla H(x,W)|,\ \ \forall(x,|W|)\in \Omega\times [R_\xi,+\infty),
    \end{equation}
    then for some $C_1, C_2>0$, it holds that
    \begin{equation}\label{rf3arb}
    H(x,W)\geq C_1|W|^\xi-C_2,\ \ \forall(x,W)\in \Omega\times \mathbb{R}^2.
    \end{equation}
    Furthermore, condition $(SL)$ or $(AL)$ implies $H(x,0,0)=0$ uniformly on $x\in\Omega$.
    It is worth to mention that condition $(\overline{H_2})$ is slightly stronger than (\ref{rf3ar}).
    \end{remark}

    \vspace{0.25cm}
    We say that $W=(u,v)\in H^1_0(\Omega,\mathbb{R}^2)$ is a weak solution of problem (\ref{P}) if 
    \begin{equation*}\
    \big(a_1+b_1\|u\|^{2(\theta_1-1)}\big)\int_{\Omega} \nabla u \cdot \nabla\varphi dx+\big(a_2+b_2\|v\|^{2(\theta_2-1)}\big)\int_{\Omega} \nabla v \cdot \nabla\psi dx= \lambda \int_{\Omega}\Phi\cdot \nabla H(x,W)dx,
    \end{equation*}
    for all $\Phi=(\varphi,\psi)\in H^1_0(\Omega,\mathbb{R}^2)$. 
    We first give the results about the existence of solutions for both subcritical case and critical case:

    \begin{theorem}\label{thmsesl}
    Suppose that $H_u,\ H_v$ have subcritical growth $(TM)^{sc}$, and $(H_1)$, $(H_2)$, $(\overline{H_2})$, $(SL)$ are satisfied. Then, problem (\ref{P}) has at least one nontrivial solution for each $\lambda>0$.
    \end{theorem}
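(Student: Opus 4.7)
The plan is to apply the Mountain Pass theorem to the $C^1$ energy functional $I_\lambda:E:=H^1_0(\Omega,\mathbb{R}^2)\to\mathbb{R}$ defined by
\begin{equation*}
I_\lambda(W)=\tfrac{a_1}{2}\|u\|^2+\tfrac{b_1}{2\theta_1}\|u\|^{2\theta_1}+\tfrac{a_2}{2}\|v\|^2+\tfrac{b_2}{2\theta_2}\|v\|^{2\theta_2}-\lambda\int_\Omega H(x,W)\,dx,
\end{equation*}
whose critical points are exactly the weak solutions of (\ref{P}). To verify the geometry at the origin I combine $(SL)$ with the subcritical bound $(TM)^{sc}$ to obtain, for any $\varepsilon,\beta>0$ and $q>2$, the pointwise estimate $H(x,W)\le\varepsilon|W|^2+C_{\varepsilon,\beta}|W|^q\exp(\beta|W|^2)$. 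H\"older's inequality and the Trudinger-Moser inequality (\ref{tmi}) on $\{\|W\|\le\rho\}$ with $\beta\rho^2$ strictly below $4\pi$, together with Sobolev embeddings, then give $I_\lambda(W)\ge\alpha>0$ on $\|W\|=\rho$ for $\rho$ and $\varepsilon$ small enough (both chosen in dependence on $\lambda$). For the far point, Remark \ref{remfc} yields $H(x,W)\ge C_1|W|^\mu-C_2$ with $\mu\ge 2\theta_{\max}$, so for any fixed $W_0\neq0$ the negative contribution of order $t^\mu$ dominates the Kirchhoff terms of order $t^{2\theta_i}$ as $t\to\infty$, producing $e=tW_0$ with $\|e\|>\rho$ and $I_\lambda(e)<0$.

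The Mountain Pass theorem then furnishes a Palais-Smale sequence $(W_n)\subset E$ at a level $c\ge\alpha>0$. Its boundedness follows by inspecting $I_\lambda(W_n)-\mu^{-1}\langle I'_\lambda(W_n),W_n\rangle$: because $\mu\ge 2\theta_{\max}>2$, the coefficients of $\|u_n\|^2$, $\|v_n\|^2$, $\|u_n\|^{2\theta_1}$ and $\|v_n\|^{2\theta_2}$ are nonnegative (and strictly positive for the quadratic ones), while $(\overline{H_2})$ makes the nonlinear contribution nonpositive off a compact set of $\mathbb{R}^2$ and bounded inside. Thus $\|W_n\|\le C$, and up to a subsequence $W_n\rightharpoonup W$ in $E$, strongly in every $L^q(\Omega,\mathbb{R}^2)$ and a.e., with $\|u_n\|^{2(\theta_1-1)}\to A_1$ and $\|v_n\|^{2(\theta_2-1)}\to A_2$ for some $A_i\ge 0$.

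To pass to the limit in the Euler-Lagrange relation I choose $\beta>0$ so small that $r\beta\sup_n\|W_n\|^2<4\pi$ for some $r>1$; the Trudinger-Moser inequality (\ref{tmi}) then supplies a uniform $L^r$-bound on $\exp(\beta|W_n|^2)$. Combined with $(TM)^{sc}$ and the a.e.\ convergence, a Vitali-type dominated convergence argument in the spirit of \cite{ddr} produces $\int_\Omega H(x,W_n)\,dx\to\int_\Omega H(x,W)\,dx$ and $\int_\Omega\nabla H(x,W_n)\cdot\Phi\,dx\to\int_\Omega\nabla H(x,W)\cdot\Phi\,dx$ for every $\Phi\in E$. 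To identify the constants $A_i$ with the Kirchhoff coefficients at $W$, I test with $\Phi=W_n-W$: the nonlinear contribution vanishes by the previous convergence together with $W_n-W\to 0$ in every $L^q$, and the strict positivity of $a_i+b_iA_i$ combined with the weak lower semicontinuity of the norm then forces $\|u_n\|\to\|u\|$ and $\|v_n\|\to\|v\|$. Consequently $A_1=\|u\|^{2(\theta_1-1)}$, $A_2=\|v\|^{2(\theta_2-1)}$, $W_n\to W$ strongly in $E$, and $W$ solves (\ref{P}); nontriviality follows from $I_\lambda(W)=c>0$.

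The main obstacle is the coupling between the nonlocal Kirchhoff coefficients $\|u\|^{2(\theta_i-1)}$ and the noncompactness from the Trudinger-Moser critical embedding: the equation satisfied by the weak limit of $(W_n)$ carries only the auxiliary constants $A_i$ and not the desired coefficients $\|u\|^{2(\theta_i-1)}$, so one must genuinely upgrade weak to strong convergence in $H^1_0(\Omega,\mathbb{R}^2)$ before concluding that $W$ solves the original system. This upgrade rests crucially on the uniform exponential integrability of $(W_n)$ provided by the a priori bound $\|W_n\|\le C$ and the Trudinger-Moser inequality, together with the subcritical growth of $\nabla H$, which is what makes the scheme go through in the present quasilinear, critical-type setting.
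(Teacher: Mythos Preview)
Your proposal is correct and follows essentially the same route as the paper: mountain-pass geometry from $(SL)$ and $(TM)^{sc}$ (Lemmas~\ref{lemgc4}--\ref{lemgc2}), boundedness of Palais--Smale sequences via $(\overline{H_2})$ (Lemma~\ref{lem4.1}), and strong $H^1_0$-convergence by testing $J'_\lambda(W_n)$ against $W_n-W$ and using the uniform exponential integrability from Trudinger--Moser at a small enough $\beta$ (Lemma~\ref{lem5.1}). The only slip is in the far-point step: with $\mu=2\theta_{\max}$ the term $t^\mu$ does not strictly dominate $t^{2\theta_{\max}}$, so (as the paper does in Lemma~\ref{lemgc2}) take the exponent supplied by Remark~\ref{remfc} strictly larger than $2\theta_{\max}$.
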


    \begin{theorem}\label{thmcerd}
    Suppose that  $H_u,\ H_v$ have critical growth $(TM)^{c}$, and $(H_1)$, $(H_2), (SL)$ are satisfied. Moreover, if
    \begin{itemize}
    \item[($H_3$):]
    $0<2\theta_{max}H(x,W)\leq W\cdot\nabla H(x,W)$, for all $(x,W)\in \Omega\times\mathbb{R}^2\backslash \{(0,0)\}$,
    \end{itemize}
    then there exists a constant $\lambda_*>0$ such that problem (\ref{P}) admits at least one nontrivial solution with positive energy for all $\lambda\geq \lambda_*$.
    \end{theorem}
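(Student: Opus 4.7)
The plan is to apply the mountain pass theorem to the $C^1$ energy functional
\begin{equation*}
I_\lambda(W)=\sum_{i=1}^{2}\left(\frac{a_i}{2}\|w_i\|^2+\frac{b_i}{2\theta_i}\|w_i\|^{2\theta_i}\right)-\lambda\int_{\Omega}H(x,W)\,dx
\end{equation*}
on $H_0^1(\Omega,\mathbb{R}^2)$, with $W=(w_1,w_2)=(u,v)$, and to restore compactness by forcing the mountain pass level below a critical Trudinger--Moser threshold through the parameter $\lambda$. First I would verify the mountain pass geometry. Local positivity follows from $(SL)$ and $(TM)^c$: given small $\epsilon>0$, $q>2$, and $\beta$ slightly larger than $\beta_0$, the pointwise bound $H(x,W)\leq \epsilon|W|^2+C_{\epsilon,q}|W|^q\exp(\beta|W|^2)$, together with (\ref{tmi}) and H\"older's inequality, yields $I_\lambda(W)\geq \alpha>0$ on a small sphere $\|W\|=\rho$. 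For the descent direction I fix $W_0\in C_c^\infty(\Omega,\mathbb{R}^2)$ with $\|w_{0,i}\|\geq 1$ and $\int_{\Omega}H(x,W_0)\,dx>0$; radial integration of $(H_3)$ gives $H(x,sW_0)\geq s^{2\theta_{max}}H(x,W_0)$ for $s\geq 1$, and since $2\theta_{max}\geq 2\theta_i$ the nonlinear term eventually dominates the Kirchhoff term, so $I_\lambda(tW_0)\to -\infty$ as $t\to\infty$ whenever $\lambda$ is sufficiently large.

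Next I would show that every Palais--Smale sequence $(W_n)$ at level $c$ is bounded. The combination $I_\lambda(W_n)-(2\theta_{max})^{-1}\langle I_\lambda'(W_n),W_n\rangle$ splits into three non-negative pieces: a coercive quadratic term $\sum_{i=1}^{2}(\tfrac{1}{2}-\tfrac{1}{2\theta_{max}})a_i\|w_{n,i}\|^2$ (positive because $\theta_{max}>1$), a Kirchhoff piece with coefficients $(\tfrac{1}{2\theta_i}-\tfrac{1}{2\theta_{max}})b_i\geq 0$ (using $\theta_i\leq\theta_{max}$), and the term $(2\theta_{max})^{-1}\lambda\int_\Omega(W_n\cdot\nabla H(x,W_n)-2\theta_{max}H(x,W_n))\,dx\geq 0$ by $(H_3)$. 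Comparing with the left-hand side $c+o_n(1)+o_n(1)\|W_n\|$, one obtains $\|W_n\|\leq C$; up to a subsequence, $W_n\rightharpoonup W$ weakly in $H_0^1(\Omega,\mathbb{R}^2)$ and a.e.\ in $\Omega$.

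The heart of the argument is exhibiting a compactness threshold $M^*>0$ below which Palais--Smale sequences converge strongly. Using $\langle I_\lambda'(W_n),W_n\rangle=o_n(1)\|W_n\|$ together with the level identity, one bounds the effective energy $(a_i+b_i\|w_{n,i}\|^{2(\theta_i-1)})\|w_{n,i}\|^2$ of each component in terms of $c$. For $M^*$ small and $\beta>\beta_0$ chosen close to $\beta_0$, this keeps the quantity $\beta|W_n|^2$ (rescaled by the Kirchhoff factor) strictly below the sharp constant $4\pi$, so (\ref{tmi}) supplies some $p>1$ and a uniform $L^p$ bound for $\exp(\beta|W_n|^2)$. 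A Vitali/Lions passage to the limit then gives $\int_\Omega H(x,W_n)\,dx\to\int_\Omega H(x,W)\,dx$ and $\int_\Omega \Phi\cdot\nabla H(x,W_n)\,dx\to\int_\Omega \Phi\cdot\nabla H(x,W)\,dx$ for every fixed test $\Phi$; testing with $\Phi=W_n-W$ and exploiting the strict monotonicity of the nonlocal operator $w\mapsto(a_i+b_i\|w\|^{2(\theta_i-1)})(-\Delta w)$ finally yields $\|W_n-W\|\to 0$.

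To finish, I would show $\gamma_\lambda\to 0$ as $\lambda\to\infty$, so that for some $\lambda_*>0$ the inequality $\gamma_\lambda<M^*$ holds whenever $\lambda\geq\lambda_*$. With $W_0$ as above, let $t_\lambda$ realize $\max_{t\geq 0}I_\lambda(tW_0)$. The identity $\frac{d}{dt}I_\lambda(tW_0)|_{t=t_\lambda}=0$ forces $t_\lambda\to 0$ (otherwise the right-hand side, which by $(H_1)$ and $(H_3)$ is $\lambda$ times a strictly positive integral, would blow up), while substitution into $I_\lambda(t_\lambda W_0)$ and $(H_3)$ yield
\begin{equation*}
I_\lambda(t_\lambda W_0)\leq \sum_{i=1}^{2}\left(\frac{1}{2}-\frac{1}{2\theta_{max}}\right)a_i t_\lambda^2\|w_{0,i}\|^2 + \sum_{i=1}^{2}\left(\frac{1}{2\theta_i}-\frac{1}{2\theta_{max}}\right)b_i t_\lambda^{2\theta_i}\|w_{0,i}\|^{2\theta_i}\longrightarrow 0.
\end{equation*}
Applying the mountain pass theorem produces a nontrivial critical point $W$ with $I_\lambda(W)=\gamma_\lambda\geq\alpha>0$. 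The main obstacle is the compactness step: one must track carefully how the two Kirchhoff factors $(a_i+b_i\|w_{n,i}\|^{2(\theta_i-1)})$, $i=1,2$, rescale the sharp Trudinger--Moser threshold separately for each component, and ensure the estimates close jointly even when $\theta_1\neq\theta_2$.
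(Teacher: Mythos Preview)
Your overall strategy---mountain pass geometry, boundedness of Palais--Smale sequences via the combination $I_\lambda-\tfrac{1}{2\theta_{max}}\langle I_\lambda',\cdot\rangle$ and $(H_3)$, compactness below a threshold, and decay $\gamma_\lambda\to 0$---matches the paper's. The substantive difference is in the compactness step. Your route is the direct one: from $(H_3)$ you get
\[
c+o_n(1)\geq\Big(\tfrac12-\tfrac1{2\theta_{max}}\Big)a_{min}\|W_n\|^2,
\]
so for $c$ below $M^*=\big(\tfrac12-\tfrac1{2\theta_{max}}\big)a_{min}\cdot\tfrac{4\pi}{\beta_0}$ you obtain $\limsup_n\|W_n\|^2<4\pi/\beta_0$ and can close via uniform Trudinger--Moser integrability and the test $\Phi=W_n-W$ (this is exactly the paper's Lemma~\ref{ms1}). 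The paper instead proves a more elaborate compactness lemma (Lemma~\ref{lemcpsc}) that allows the larger, Kirchhoff-dependent threshold
\[
\min_{i=1,2}\tfrac12\Big(a_i\tfrac{2\pi}{\beta_0}+\tfrac{b_i}{\theta_i}\big(\tfrac{2\pi}{\beta_0}\big)^{\theta_i}\Big),
\]
by combining a four-claim argument (nontriviality of the weak limit, limiting equation, energy-gap estimate, level identification) with Lions' concentration--compactness refinement of Trudinger--Moser (Lemma~\ref{lemtm5}). For Theorem~\ref{thmcerd} itself your simpler threshold suffices, since $\gamma_\lambda\to 0$; the paper's sharper threshold pays off only in the quantitative bound on $\lambda_*$ described in Remark~\ref{remympl}. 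Two small points: your phrase ``rescaled by the Kirchhoff factor'' is misleading---the Kirchhoff coefficients do not rescale the exponent in Trudinger--Moser, they enter only through the level/norm estimate above; and your descent argument via radial integration of $(H_3)$ gives growth $t^{2\theta_{max}}$, which merely matches the top Kirchhoff term and forces $\lambda$ large, whereas the paper uses $(H_2)$ (via Remark~\ref{remfc}) to get growth $t^\xi$ with $\xi>2\theta_{max}$ for all $\lambda>0$---harmless here since $\lambda$ is large anyway.
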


Next, we consider the multiplicity of solutions for problem (\ref{P}).
Recently,  Fiscella, Pucci and Zhang in \cite{fpz} obtained the existence of solution for a $p$-fractional Hardy-Schr\"{o}dinger-Kirchhoff system in $\mathbb{R}^N$ with critical nonlinearities as follows
\begin{eqnarray*}
    \left\{ \arraycolsep=1.5pt
       \begin{array}{ll}
        M(\|(u,v)\|^p)(\mathcal{L}^s_p u+V(x)|u|^{p-2}u)-\sigma \frac{|u|^{p-2}u}{|x|^{ps}}=\lambda H_u(x,u,v)+\frac{\psi}{p^*_s}|v|^\omega|u|^{\psi-2}u, \\[2mm]
        M(\|(u,v)\|^p)(\mathcal{L}^s_p v+V(x)|v|^{p-2}v)-\sigma \frac{|v|^{p-2}v}{|x|^{ps}}=\lambda H_v(x,u,v)+\frac{\omega}{p^*_s}|u|^\psi|v|^{\omega-2}v,
        \end{array}
    \right.
    \end{eqnarray*}
where $0<s<1<p<\infty,\ sp<N,\ \psi>1$ and $\omega>1$ with $\psi+\omega=p^*_s$ and $p^*_s=\frac{Np}{N-sp}$, and the authors adjusted $\lambda$ (for $\lambda$ large enough) to get the existence of solution.

Motivated by the works mentioned above and also \cite{nt}, the main purpose of the second part of this paper is to investigate the multiplicity of solutions for subcritical case and another type solution for critical case which is different from the Mountain pass solution. As stated in Lemma \ref{lemgc3}, we will define $\Lambda_0(b_{min})$ (see (\ref{deflbm})) depending on $b_{min}$ such that $\Lambda_0(b_{min})\rightarrow \infty$ as $b_{min}\rightarrow \infty$. Then there exists a value $b_0>0$ (see (\ref{defb0})) such that for all $b_{min}> b_0$, it holds that $\Lambda_0(b_{min})>\frac{a_{max}}{a_{min}}\lambda_1$, where
    \begin{equation}\label{defl1}
    \lambda_1:=\inf_{W\in H^1_0(\Omega,\mathbb{R}^2)\setminus\{(0,0)\}} \frac {\|W\|^2}{\int_{\Omega}|W|^2dx}>0.
    \end{equation}

    \begin{theorem}\label{thmscerm}
    Suppose that  $H_u,\ H_v$ have subcritical growth $(TM)^{sc}$, and $(H_1)$, $(H_2)$, $(\overline{H_2})$, $(AL)$ are satisfied. Then when $b_{min}>b_0$, problem (\ref{P}) has at least two nontrivial solutions for each $\frac{a_{max}}{a_{min}}\lambda_1<\lambda<\Lambda_0(b_{min})$.
    \end{theorem}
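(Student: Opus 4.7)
Solutions of (\ref{P}) correspond to critical points of the $C^1$ energy
\begin{equation*}
I_\lambda(W)=\tfrac{a_1}{2}\|u\|^2+\tfrac{b_1}{2\theta_1}\|u\|^{2\theta_1}+\tfrac{a_2}{2}\|v\|^2+\tfrac{b_2}{2\theta_2}\|v\|^{2\theta_2}-\lambda\int_\Omega H(x,W)\,dx,
\end{equation*}
and I would produce two of them via a two-step scheme in the spirit of Naimen--Tarsi \cite{nt}: first a negative-energy local minimum $W_1$ inside a small ball, then a second solution $W_2$ via the Mountain Pass Theorem based at $W_1$, adapted to the two independent Kirchhoff factors $\|u\|^{2(\theta_1-1)}$, $\|v\|^{2(\theta_2-1)}$ and the coupled asymptotically linear behaviour at the origin.

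The key geometric input consists of two complementary estimates. For the descent, $(AL)$ together with $\lambda>\tfrac{a_{max}}{a_{min}}\lambda_1$ supplies a direction of negative energy near the origin: taking $\phi_1$ a first Dirichlet eigenfunction with $\int_\Omega\phi_1^2\,dx=1$ and choosing the ray $W_t=(t\phi_1,0)$ or $(0,t\phi_1)$ so that the relevant coefficient is $a_{max}$, one finds $I_\lambda(W_t)=\tfrac12(a_{max}\lambda_1-\lambda a_{min})t^2+o(t^2)<0$ for all sufficiently small $t>0$. For the barrier, combining $(TM)^{sc}$, $(AL)$ and the Trudinger--Moser inequality (\ref{tmi}) yields, for arbitrary $\varepsilon,\beta>0$ and $q>2\theta_{max}$,
\begin{equation*}
I_\lambda(W)\ge \tfrac{a_{min}}{2}\|W\|^2\bigl(1-\tfrac{\lambda(a_{min}+\varepsilon)}{a_{min}\lambda_1}\bigr)+\tfrac{b_{min}}{C\theta_{max}}\|W\|^{2\theta_{max}}-C\lambda\|W\|^q
\end{equation*}
whenever $\|W\|$ is bounded and $\beta\|W\|^2<4\pi$. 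The definition of $\Lambda_0(b_{min})$ in Lemma \ref{lemgc3} is precisely the largest value of $\lambda$ for which the right-hand side admits a positive maximum over $\|W\|=\rho_0$ for some $\rho_0>0$; hence for $\lambda\in\bigl(\tfrac{a_{max}}{a_{min}}\lambda_1,\Lambda_0(b_{min})\bigr)$ there exist $\rho_0,\alpha_0>0$ with $I_\lambda\ge\alpha_0$ on $\{\|W\|=\rho_0\}$, while the negative-direction point $W_t$ lies strictly inside $B_{\rho_0}$.

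The first solution is then obtained by the standard Ekeland principle on $\bar B_{\rho_0}$: since $c_1:=\inf_{\bar B_{\rho_0}}I_\lambda<0<\alpha_0\le\inf_{\partial B_{\rho_0}}I_\lambda$, a bounded (PS)$_{c_1}$ sequence $\{W_n^{(1)}\}$ exists strictly inside the ball. The subcritical growth $(TM)^{sc}$ and (\ref{tmi}) give uniform integrability of $H(\cdot,W_n^{(1)})$ and $\nabla H(\cdot,W_n^{(1)})$, so passing to the limit produces a weak solution $W_1$ with $I_\lambda(W_1)=c_1<0$; strong convergence (needed to identify the nonlocal factors $\|u_n\|^{2(\theta_1-1)}$, $\|v_n\|^{2(\theta_2-1)}$) is obtained by testing the difference of the equations satisfied by $W_n^{(1)}$ and $W_1$ against $W_n^{(1)}-W_1$ and using the monotonicity of $t\mapsto(a_i+b_i t^{\theta_i-1})t$ on $[0,\infty)$. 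For the second solution, fix any $W_0\ne 0$: the growth bound (\ref{rf3arb}) derived from $(\overline{H_2})$ gives
\begin{equation*}
I_\lambda(tW_0)\le \tfrac{a_{max}}{2}t^2\|W_0\|^2+\tfrac{b_{max}}{2\theta_{min}}t^{2\theta_{max}}\|W_0\|^{2\theta_{max}}-\lambda C_1 t^{\mu}\int_\Omega|W_0|^\mu\,dx+\lambda C_2|\Omega|\to-\infty
\end{equation*}
as $t\to\infty$, since $\mu\ge 2\theta_{max}$ (choosing $W_0$ with $\int_\Omega|W_0|^\mu\,dx$ large enough to absorb the $t^{2\theta_{max}}$ term when $\mu=2\theta_{max}$); pick $t_\infty$ with $I_\lambda(t_\infty W_0)<c_1$. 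Because every continuous path from $W_1$ to $t_\infty W_0$ must cross $\partial B_{\rho_0}$, the Mountain Pass level $c_2\ge\alpha_0>c_1$ is well-defined and delivers a (PS)$_{c_2}$ sequence whose limit $W_2$ satisfies $I_\lambda(W_2)=c_2>c_1=I_\lambda(W_1)$, so $W_2\ne W_1$ and both are nontrivial.

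The main obstacle will be verifying the Palais--Smale condition at the level $c_2$. Boundedness of any (PS) sequence follows from $(\overline{H_2})$ combined with the coercive Kirchhoff terms $\tfrac{b_i}{2\theta_i}\|\cdot\|^{2\theta_i}$, but extracting strong convergence requires two delicate steps: first, identifying the weak limits of the nonlocal factors $\|u_n\|^{2(\theta_1-1)}\to A_1$ and $\|v_n\|^{2(\theta_2-1)}\to A_2$ and proving $A_1=\|u\|^{2(\theta_1-1)}$, $A_2=\|v\|^{2(\theta_2-1)}$ via lower semicontinuity of the norms together with a Brezis--Lieb-type splitting applied to the limit equation; and second, showing $\int_\Omega\nabla H(x,W_n)\cdot(W_n-W)\,dx\to 0$, which in the subcritical regime follows by fixing $\beta\|W_n\|^2$ uniformly below $4\pi$ and combining (\ref{tmi}) with H\"older's inequality to obtain equi-integrability. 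Once these two convergences are secured, $W_n\to W$ strongly in $H_0^1(\Omega,\mathbb{R}^2)$ and the argument is complete.
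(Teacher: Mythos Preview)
Your proposal is correct and follows essentially the same scheme as the paper: Lemma \ref{lemgc3} supplies the local-minimum/barrier geometry (descent via $(AL)$ and $\lambda>\tfrac{a_{max}}{a_{min}}\lambda_1$, barrier via the Kirchhoff term $b_{min}\|W\|^{2\theta_{max}}$), Ekeland's principle on $\bar B_{\rho_1}$ gives the negative-energy solution, the Mountain Pass Theorem gives the positive-energy one, and the subcritical Palais--Smale condition of Lemma \ref{lem5.1} closes both arguments. The only cosmetic differences are that the paper uses the vectorial eigenfunction $\Phi_1$ rather than $(t\phi_1,0)$ for the descent, runs the Mountain Pass from $0$ rather than from $W_1$, and reduces the barrier estimate to the two-term form $\tfrac{b_{min}}{2\theta_{max}(2^{\theta_{max}}-1)}\|W\|^{2\theta_{max}}-\lambda C_*\|W\|^2$, which is how $\Lambda_0(b_{min})$ in (\ref{deflbm}) is actually defined.
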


    \begin{theorem}\label{thmcerm}
    Suppose that  $H_u,\ H_v$ have critical growth $(TM)^{c}$, and $(H_1)$, $(H_2), (AL)$ are satisfied. Then when $b_{min}>b_0$, problem (\ref{P}) has at least one nontrivial solution with negative energy for each $\frac{a_{max}}{a_{min}}\lambda_1<\lambda<\Lambda_0(b_{min})$.
    \end{theorem}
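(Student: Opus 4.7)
The plan is to realize the solution as a local minimizer of the energy functional
\[
I_\lambda(W)=\tfrac{a_1}{2}\|u\|^2+\tfrac{a_2}{2}\|v\|^2+\tfrac{b_1}{2\theta_1}\|u\|^{2\theta_1}+\tfrac{b_2}{2\theta_2}\|v\|^{2\theta_2}-\lambda\int_\Omega H(x,W)\,dx
\]
inside a small ball around the origin and at a strictly negative level. Since $I_\lambda(0,0)=0$, any critical point with $I_\lambda(W_0)<0$ is automatically nontrivial, which is precisely why the solution produced here is of a different nature from the mountain-pass one of Theorem \ref{thmcerd}.

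First I would invoke Lemma \ref{lemgc3}: for $b_{min}>b_0$ and $\lambda<\Lambda_0(b_{min})$ it supplies a radius $\rho=\rho(\lambda,b_{min})>0$ together with $\eta>0$ such that $I_\lambda(W)\geq\eta$ whenever $\|W\|=\rho$, and $\rho^2$ stays strictly below the Trudinger--Moser threshold $4\pi/\beta_0$ coming from $(TM)^c$. To show that $c_\lambda:=\inf_{\overline{B_\rho}}I_\lambda<0$, I would test with $\Phi_t=(t\varphi_1,0)$, where $\varphi_1$ is a first Dirichlet eigenfunction of $-\Delta$ on $\Omega$ and, without loss of generality, $a_1=a_{max}$. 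Hypothesis $(AL)$, read in the standard asymptotically-linear sense, gives $H(x,W)=\tfrac{a_{min}}{2}|W|^2+o(|W|^2)$ as $|W|\to 0$, so
\[
I_\lambda(\Phi_t)=\tfrac{a_{min}}{2}t^2\Bigl(\int_\Omega\varphi_1^2\,dx\Bigr)\Bigl(\tfrac{a_{max}}{a_{min}}\lambda_1-\lambda\Bigr)+o(t^2),\qquad t\to 0^+.
\]
Because $\lambda>\frac{a_{max}}{a_{min}}\lambda_1$, this is strictly negative for every sufficiently small $t>0$; shrinking $t$ further to ensure $\Phi_t\in B_\rho$ then yields $c_\lambda<0<\eta\leq\inf_{\partial B_\rho}I_\lambda$.

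Ekeland's variational principle applied to $I_\lambda$ on the complete set $\overline{B_\rho}$ then produces a sequence $\{W_n\}\subset B_\rho$ (trapped inside the open ball thanks to the strict bound on $\partial B_\rho$) with $I_\lambda(W_n)\to c_\lambda$ and $I'_\lambda(W_n)\to 0$ in $H^{-1}(\Omega,\mathbb{R}^2)$. Up to a subsequence, $W_n\rightharpoonup W_0$ in $H^1_0$, strongly in every $L^p$ and a.e.\ on $\Omega$. The main obstacle, as always in the critical regime, is to justify $\langle I'_\lambda(W_n),\Phi\rangle\to\langle I'_\lambda(W_0),\Phi\rangle$ for every $\Phi\in H^1_0(\Omega,\mathbb{R}^2)$, that is, to pass to the limit in the nonlinear term. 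This is where $b_{min}>b_0$ becomes essential: the definition of $\Lambda_0(b_{min})$ is tuned precisely so that $\beta_0\rho^2<4\pi$, and then (\ref{tmi}) provides a uniform bound $\sup_n\int_\Omega\exp(\beta|W_n|^2)\,dx<\infty$ for some $\beta>\beta_0$. Combined with the critical growth of $H_u,H_v$, Vitali's theorem then delivers equi-integrability of $\{\nabla H(\cdot,W_n)\cdot\Phi\}_n$ and hence the desired convergence, so $W_0$ is a weak solution of (\ref{P}). Finally, $I_\lambda(W_0)\leq c_\lambda<0=I_\lambda(0,0)$ forces $W_0\neq(0,0)$, concluding the proof.
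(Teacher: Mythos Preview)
Your outline follows the paper's route: Lemma~\ref{lemgc3} already furnishes both the positive barrier on the annulus $\rho_1\le\|W\|\le\rho_2$ \emph{and} the strict negativity of $\inf_{\overline{B_{\rho_1}}}J_\lambda$ (so your separate test with $(t\varphi_1,0)$ is redundant, though harmless), Ekeland then yields a Palais--Smale sequence confined to $\overline{B_{\rho_1}}$ with $\rho_1^2<4\pi/\beta_0$, and the Trudinger--Moser bound is what handles the exponential nonlinearity.

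There is, however, a genuine gap peculiar to the Kirchhoff structure. Your Vitali step gives $\int_\Omega\nabla H(x,W_n)\cdot\Phi\to\int_\Omega\nabla H(x,W_0)\cdot\Phi$, but passing to the limit in $\langle I_\lambda'(W_n),\Phi\rangle$ also requires the nonlocal coefficients $a_i+b_i\|u_n\|^{2(\theta_i-1)}$ to converge to $a_i+b_i\|u_0\|^{2(\theta_i-1)}$. Under mere weak convergence they converge (along a subsequence) to $a_i+b_i\sigma_i^{2(\theta_i-1)}$ with $\sigma_1=\lim\|u_n\|\ge\|u_0\|$ and $\sigma_2=\lim\|v_n\|\ge\|v_0\|$, so the weak limit $W_0$ solves only a system with these frozen coefficients, not (\ref{P}), unless the convergence is strong. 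The same defect undermines the assertion $I_\lambda(W_0)\le c_\lambda$: you have not argued that $\int_\Omega H(x,W_n)\to\int_\Omega H(x,W_0)$, and without it weak lower semicontinuity of the norm pieces does not give the inequality. The paper closes both gaps simultaneously via Lemma~\ref{ms1}: since $\limsup_n\|W_n\|^2\le\rho_1^2<4\pi/\beta_0$, one tests $J_\lambda'(W_n)$ against $W_n-W_0$, uses H\"older together with (\ref{tmtr}) to get $\int_\Omega(W_n-W_0)\cdot\nabla H(x,W_n)\to 0$, and reads off $\|u_n\|\to\|u_0\|$, $\|v_n\|\to\|v_0\|$; strong convergence in $H_0^1(\Omega,\mathbb{R}^2)$ then makes $W_0$ a genuine solution of (\ref{P}) with $J_\lambda(W_0)=c_\lambda<0$. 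Replace your Vitali conclusion by this Palais--Smale argument and the proof is complete.
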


    Next, we make some remarks about above theorems.

    \begin{remark}\label{remympl}\rm
    Theorem \ref{thmcerd}, we can quantify the number $\lambda_*$ for some special examples. Such as for $H(x,W)=|W|^{\mu}\exp(\beta_0|W|^2)$ with $\mu\geq 2\theta_{max}$ and $\beta_0>0$. We fix a function $W_0=(u_0,v_0)\in H^1_0(\Omega,\mathbf{R}^2)$ satisfying $\|W_0\|=1$ such that
    \[S_\mu:=\inf_{W\in H^1_0(\Omega,\mathbf{R}^2)\setminus\{(0,0)\}} \frac {\|W\|}{(\int_{\Omega}|W|^\mu dx)^{1/\mu}}=\left(\int_{\Omega}|W_0|^\mu dx\right)^{-1/\mu}.\]
    Therefore the mountain pass level $c_{M,\lambda}$ satisfies
    \[c_{M,\lambda}\leq \max_{t\geq 0}\left[\frac{a_{max}}{2}t^2+\frac{b_{max}}{2\theta_{min}}\left(t^{2\theta_1}\|u_0\|^{2\theta_1}+t^{2\theta_2}\|v_0\|^{2\theta_2}\right)-\lambda t^\mu S^{-\mu}_\mu\right].\]
    It is enough to take
    \[\lambda_*=\max\left\{\frac{2a_{max}}{\mu S^{-\mu}_\mu}\left(\frac{b_{max}}{a_{max}\theta_{min}}\right)^{\frac{\mu-2}{2\theta_{max-2}}},
    \frac{2a_{max}}{\mu S^{-\mu}_\mu},\Gamma_*\right\}>0\]
    where
    \[\Gamma_*=\left\{\frac{\min\bigg\{\frac{1}{2}\Big(a_1\frac{2\pi}{\beta_0}+\frac{b_1}{\theta_1}\big(\frac{2\pi}{\beta_0}\big)^{\theta_1}\Big),
    \frac{1}{2}\Big(a_2\frac{2\pi}{\beta_0}+\frac{b_2}{\theta_2}\big(\frac{2\pi}{\beta_0}\big)^{\theta_2}\Big)\bigg\}}{a_{max}\left(1-\frac{2}{\mu}\right)
    \left[\frac{2a_{max}}{\mu S^{-\mu}_\mu}\right]^{\frac{2}{\mu-2}}}\right\}^{\frac{2}{\mu-2}},\]
    such that for each $\lambda>\lambda_*$, it holds that
    \[c_{M,\lambda}<\min\bigg\{\frac{1}{2}\Big(a_1\frac{2\pi}{\beta_0}+\frac{b_1}{\theta_1}\big(\frac{2\pi}{\beta_0}\big)^{\theta_1}\Big),
    \frac{1}{2}\Big(a_2\frac{2\pi}{\beta_0}+\frac{b_2}{\theta_2}\big(\frac{2\pi}{\beta_0}\big)^{\theta_2}\Big)\bigg\}.\]
    \end{remark}

\begin{remark}\label{remcer}\rm
    In Theorem \ref{thmcerm}, the reason why we couldn't get another solution with positive energy as in Theorem \ref{thmscerm}, is the difficulty of keeping the mountain pass value less than some proper level. In fact, by the standard assumption
    \begin{itemize}
     \item[($H_4$):]
    there exists $\eta>0$ such that
    $$
    \liminf_{|W|\rightarrow \infty}\frac {W\cdot\nabla H_u(x,W)}{\exp(\beta_0 |W|^2)}\geq \eta>\max\left\{\frac {4(a_1+b_1\big(\frac{4\pi}{\beta_0}\big)^{\theta_1-1})}{\beta_0 d^2 e },\frac {4(a_2+b_2\big(\frac{4\pi}{\beta_0}\big)^{\theta_2-1})}{\beta_0 d^2 e }\right\},
    $$
    \end{itemize}
    then by using some proper functions (see \cite{ddr,dt22}), we can prove the mountain pass level $c_M$ satisfying
    \begin{align*}
    \begin{split}
    c_{M}<\min\bigg\{\frac{1}{2}\Big(a_1\frac{4\pi}{\beta_0}+\frac{b_1}{\theta_1}\big(\frac{4\pi}{\beta_0}\big)^{\theta_1}\Big),
    \frac{1}{2}\Big(a_2\frac{4\pi}{\beta_0}+\frac{b_2}{\theta_2}\big(\frac{4\pi}{\beta_0}\big)^{\theta_2}\Big)\bigg\}.
    \end{split}
    \end{align*}
    However, it is not enough to make sure that functional satisfies Palais-Smale condition by Lions' concentration-compactness lemma, see (\ref{mpllp}) in Lemma \ref{lemcpsc}.
    \end{remark}

    \begin{remark}\label{remext}\rm
    There are many functions satisfying conditions in each theorem:
    \begin{itemize}
    \item[($i$)]
    in Theorem \ref{thmsesl}: $H(x,W)=|W|^{\mu}[\exp(\alpha_0|W|)-1]$ with $\mu\geq 2$ and $\alpha_0>0$  which satisfies $(\overline{H_2})$ but does not satisfy $({H_3})$ for $2\leq \mu<2\theta_{max}$;
    \item[($ii$)]
     in Theorem \ref{thmcerd}: $H(x,W)=\frac{|W|^{\mu}\exp(\beta_0|W|^2)}{2^{\alpha_0|W|^2}}$ with $\mu\geq 2\theta_{max}$ and $0\leq \alpha_0<\frac{\beta_0}{\log 2}$;
    \item[($iii$)]
    in Theorem \ref{thmscerm}: $H(x,W)=\frac{a_{min}}{2}|W|^2 \exp(\alpha_0|W|)$ with $\alpha_0>0$ which satisfies $(\overline{H_2})$ but does not satisfy $({H_3})$;
    \item[($iv$)]
    in Theorem \ref{thmcerm}: $H(x,W)=\frac{a_{min}}{2}|W|^2\exp(\beta_0|W|^2)$ with $\beta_0>0$ which does not satisfy $({H_3})$.
    \end{itemize}
    Moreover, it is worth noting that if we add conditions in $(H_1)$ that $H_u(x,u,v)\equiv 0$ and $H_v(x,u,v)\equiv 0$ for $u\leq 0$ or $v\leq 0$, uniformly on $x\in\Omega$, then by Maximum principle, we can proof that the solutions we obtain in above theorems are entire positive, i.e. $u>0$ and $v>0$ in $\Omega$, and the example functions only need to be defined as $\overline{H}(x,W)=H(x,W)\mbox{sign}u\ \mbox{sign} v$, where $\mbox{sign}t=0$ for $t\leq 0$ and $\mbox{sign}t=1$ for $t>0$.
    \end{remark}



    This paper is organized as follows: Section \ref{sec preliminaries} contains some technical results. In Section \ref{vf}, we present the variational setting in which our problem will be treated. Section \ref{ps} is devoted to showing some properties of the Palais-Samle sequences. Then, we split Section \ref{main} into two subsections for the subcritical case and critical case, and we complete the proofs of Theorems \ref{thmsesl}-\ref{thmcerd}. And then, we will study the multiple solutions of problem (\ref{P}). In section \ref{vfh6}, we construct an important variational framework and complete the proofs of Theorems \ref{thmcerd}-\ref{thmscerm}. Hereafter, $C,C_0,C_1,C_2...$ will denote positive (possibly different) constants.

\section{{\bfseries Some preliminary results}}\label{sec preliminaries}

    We first introduce some famous inequalities as follows, and inspired by those inequalities, we conclude some similar forms of inequalities.

    \begin{lemma}\label{lemtm1}
    \cite{m} Let $\Omega$ be a bounded domain in $\mathbb{R}^2$ and $u\in H^1_0(\Omega)$. Then for every $\beta >0$,
    \begin{align*}\
    \int_{\Omega} \exp(\beta |u|^2)dx<\infty.
    \end{align*}
    Moreover, there exists a constant $C(\Omega)>0$ such that
    $$
    \sup_{u\in H^1_0(\Omega):\|u\| \leq 1}\int_{\Omega}\exp(\beta |u|^2)dx\leq C(\Omega),
    \ \ \forall\beta \leq 4\pi .
    $$
   Furthermore,\  $4\pi$ is the best constant, that is, the supreme in the left is $\infty$ if \ $\beta >4\pi$.
    \end{lemma}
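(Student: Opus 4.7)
The plan is to follow Moser's classical strategy: reduce the two-dimensional exponential estimate to a one-dimensional integral inequality via symmetric rearrangement and a logarithmic substitution, then invoke a sharp 1D lemma, and finally construct an explicit extremal sequence to show optimality of $4\pi$.

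First I would symmetrize. Let $u^{*}\in H^1_0(B_R)$ be the Schwarz decreasing rearrangement of $|u|$ on the ball $B_R$ with $|B_R|=|\Omega|$. By P\'olya--Szeg\"o, $\|u^{*}\|\le\|u\|$, and by Cavalieri's principle, $\int_{\Omega}\exp(\beta u^2)\,dx=\int_{B_R}\exp(\beta(u^{*})^2)\,dx$; hence it suffices to treat nonnegative radial nonincreasing $u=u(r)$ on $B_R$ with $u(R)=0$. With the substitution $r=Re^{-t/2}$, $t\in[0,\infty)$, and $\psi(t):=\sqrt{4\pi}\,u(Re^{-t/2})$, a direct computation using $dt=-2\,dr/r$ gives
\[
\|u\|^2=\int_0^{\infty}|\psi'(t)|^2\,dt,\qquad \int_{B_R}\exp(\beta u^2)\,dx=\pi R^2\int_0^{\infty}\exp\!\Bigl(\tfrac{\beta}{4\pi}\psi(t)^2-t\Bigr)dt,
\]
with $\psi(0)=\sqrt{4\pi}\,u(R)=0$, and $\|u\|\le 1$ translating into $\int_0^{\infty}|\psi'(t)|^2\,dt\le 1$.

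The heart of the proof---and what I expect to be the main obstacle---is the classical Moser one-dimensional lemma: for every absolutely continuous $\psi:[0,\infty)\to\mathbb{R}$ with $\psi(0)=0$ and $\int_0^{\infty}|\psi'|^2\,dt\le 1$, one has $\int_0^{\infty}\exp(\psi(t)^2-t)\,dt\le C_0$ for a universal constant $C_0$. The argument uses the Cauchy--Schwarz estimate $|\psi(t)|\le\sqrt{t}$, so that the integrand is at most $1$ on the ``good set'' $\{\psi(t)^2\le t\}$; on the complementary ``bad set'' a careful convex-analytic estimate (writing $\psi(t)^2-t$ as a quadratic deficit around the level at which $\psi(t)^2=t$ is first exceeded, together with absolute continuity and $\int|\psi'|^2\le 1$) yields an exponentially decaying majorant with finite integral. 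Combining with the reduction above and taking $\beta=4\pi\gamma$ with $\gamma\le 1$, we obtain the uniform bound $\sup_{\|u\|\le 1}\int_{\Omega}\exp(\beta u^2)\,dx\le \pi R^2 C_0$; the finiteness of $\int_{\Omega}\exp(\beta u^2)\,dx$ for arbitrary fixed $u$ and $\beta>0$ then follows from this bound by a standard truncation argument (writing $u=u_N+(u-u_N)$ with $u_N$ an $L^\infty$-truncation of $u$, so that $\|u-u_N\|$ is small enough to absorb $\beta$).

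For the sharpness, I would exhibit the classical Moser sequence
\[
M_k(x)=\frac{1}{\sqrt{2\pi}}\begin{cases}\sqrt{\log k}, & |x|\le R/k,\\ \log(R/|x|)/\sqrt{\log k}, & R/k<|x|\le R,\\ 0, & |x|>R,\end{cases}
\]
verify $\|M_k\|^2=\tfrac{1}{\log k}\int_{R/k}^R\tfrac{dr}{r}=1$ directly, and observe that $M_k\equiv\sqrt{\log k/(2\pi)}$ on $B_{R/k}$, so that
\[
\int_{B_R}\exp(\beta M_k^2)\,dx\ge\frac{\pi R^2}{k^2}\exp\!\Bigl(\tfrac{\beta}{2\pi}\log k\Bigr)=\pi R^2\,k^{\beta/(2\pi)-2}\longrightarrow\infty\quad\text{as }k\to\infty
\]
whenever $\beta>4\pi$, confirming that $4\pi$ is the sharp constant.
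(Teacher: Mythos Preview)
The paper does not give its own proof of this lemma: it is stated with a citation to Moser's paper and used as a black box throughout. So there is no ``paper's proof'' to compare against.

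That said, your outline is a faithful reproduction of Moser's original argument and is correct in all its computable parts (the symmetrization step, the logarithmic change of variables $r=Re^{-t/2}$ with $\psi=\sqrt{4\pi}\,u$, the norm and integral identities, and the sharpness computation with the Moser sequence all check out). The only place you are deliberately vague is the one-dimensional lemma --- the uniform bound $\int_0^\infty e^{\psi(t)^2-t}\,dt\le C_0$ for $\psi(0)=0$, $\int_0^\infty|\psi'|^2\le 1$ --- and you are right to flag it as the crux: the Cauchy--Schwarz bound $|\psi(t)|\le\sqrt{t}$ gives only $e^{\psi^2-t}\le 1$, which is not integrable on $[0,\infty)$, so the actual work lies in Moser's sharper estimate exploiting the \emph{deficit} in Cauchy--Schwarz. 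Your phrase ``careful convex-analytic estimate'' is a placeholder for that argument, not a proof of it; if you intend this to be a self-contained proof you will need to fill that step in (e.g.\ via Moser's original level-set decomposition or the later Carleson--Chang / Adams-style argument).

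For the purposes of this paper, though, simply citing Moser suffices, and that is exactly what the authors do.
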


    \begin{lemma}\label{lemtm3}
    \cite{l} Let $\{u_n\}$ be a sequence of functions in $H^1_0(\Omega)$ with $\|\nabla u_n\|_2=1$ such that $u_n\rightharpoonup u\neq0$ weakly in  $H^1_0(\Omega)$. Then for any  $0<p<4\pi /(1-\|\nabla u\|^2_2)$ we have
    $$
    \sup_n \int_{\Omega}\exp(p|u_n|^2)dx<\infty.
    $$
    \end{lemma}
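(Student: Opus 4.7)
The plan is to adapt P.-L.~Lions' original argument, which rests on a weak-convergence decomposition combined with a suitable H\"older splitting of the exponential. First I would set $w_n:=u_n-u$, so that $w_n\rightharpoonup 0$ in $H^1_0(\Omega)$. Expanding
\[
1=\|\nabla u_n\|_2^2=\|\nabla u\|_2^2+\|\nabla w_n\|_2^2+2\int_\Omega\nabla u\cdot\nabla w_n\,dx
\]
and using the weak convergence $\nabla w_n\rightharpoonup 0$ in $L^2$, one obtains
\[
\|\nabla w_n\|_2^2\;\longrightarrow\;\alpha:=1-\|\nabla u\|_2^2\in(0,1).
\]

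Next I would exploit the elementary bound $(a+b)^2\leq (1+\varepsilon)a^2+(1+1/\varepsilon)b^2$ with $\varepsilon>0$ to write
\[
p|u_n|^2\leq p(1+\varepsilon)|w_n|^2+p(1+1/\varepsilon)|u|^2.
\]
Applying this inside the exponential and invoking H\"older's inequality with conjugate exponents $q>1$ and $q'$ yields
\[
\int_\Omega e^{p|u_n|^2}dx\leq\left(\int_\Omega e^{qp(1+\varepsilon)|w_n|^2}dx\right)^{1/q}
\left(\int_\Omega e^{q'p(1+1/\varepsilon)|u|^2}dx\right)^{1/q'}.
\]
The second factor is finite because $u\in H^1_0(\Omega)$ is \emph{fixed}, so the first assertion of Lemma \ref{lemtm1} applies for any exponent. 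For the first factor I would rescale: writing $|w_n|^2=\|\nabla w_n\|_2^2\bigl|w_n/\|\nabla w_n\|_2\bigr|^2$, it suffices to ensure
\[
qp(1+\varepsilon)\,\|\nabla w_n\|_2^2\leq 4\pi
\]
for $n$ large, after which Moser's inequality (second part of Lemma \ref{lemtm1}) gives a uniform bound on the unit-norm sequence $w_n/\|\nabla w_n\|_2$.

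The core point is a parameter-selection argument: since the hypothesis $p<4\pi/(1-\|\nabla u\|_2^2)$ reads $p\alpha<4\pi$, and $q(1+\varepsilon)\to 1$ as $(q,\varepsilon)\to(1^+,0^+)$, I can fix $\varepsilon>0$ small and $q>1$ close to $1$ so that $qp(1+\varepsilon)\alpha<4\pi$; because $\|\nabla w_n\|_2^2\to\alpha$, the required inequality holds for all large $n$, and the finitely many initial terms are handled trivially since each individual integral is finite by Trudinger's theorem. This is the only delicate step; the rest is bookkeeping. Combining the two uniform bounds completes the proof.
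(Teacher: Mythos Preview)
Your proposal is correct and follows essentially the same route as the paper: the paper does not reprove Lemma~\ref{lemtm3} itself (it is cited from \cite{l}), but its proof of the vector-valued analogue, Lemma~\ref{lemtm5}, proceeds exactly as you outline---decompose $u_n=u+w_n$, use weak convergence to get $\|\nabla w_n\|_2^2\to 1-\|\nabla u\|_2^2$, apply the Young-type inequality $|u_n|^2\le(1+\varepsilon)|w_n|^2+(1+1/\varepsilon)|u|^2$, split via H\"older, bound the $u$-factor by the qualitative Trudinger statement and the $w_n$-factor by Moser's sharp inequality after choosing $q>1$ and $\varepsilon>0$ so that $qp(1+\varepsilon)\|\nabla w_n\|_2^2\le 4\pi$ for large $n$. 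The only cosmetic difference is that the paper writes $\varepsilon^2$ in place of your $\varepsilon$.
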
 \ \

    Now, by the above results, we begin to establish the Trudinger-Moser type inequalities in $H^1_0(\Omega,\mathbb{R}^2)$:
    \begin{lemma}\label{lemtm4}
    Let $\Omega$ be a bounded domain in $\mathbb{R}^2$ and $(u,v)\in H^1_0(\Omega,\mathbb{R}^2)$. Then for every $\beta >0$,
    \begin{align}\label{tmt}
    \int_{\Omega} \exp(\beta |(u,v)|^2)dx<\infty.
    \end{align}
    Moreover, it holds that
    \begin{equation}\label{tmtr}
    \sup_{(u,v)\in H^1_0(\Omega,\mathbb{R}^2), \|(u,v)\|\leq 1}\int_{\Omega}\exp\left(\beta|(u,v)|^2\right) dx \leq C(\Omega),\ \ \forall \beta\leq 4\pi,
    \end{equation}
    where $C(\Omega)$ is given as in Lemma \ref{lemtm1}.
    \end{lemma}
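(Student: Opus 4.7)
The plan is to reduce the vector-valued Trudinger–Moser inequality (\ref{tmt})–(\ref{tmtr}) to the scalar version stated in Lemma \ref{lemtm1} by exploiting the algebraic identity $|(u,v)|^2 = u^2 + v^2$, which turns the exponential into a product $\exp(\beta|(u,v)|^2) = \exp(\beta u^2)\,\exp(\beta v^2)$, and then handling the product with Hölder's inequality.

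For the qualitative part (\ref{tmt}), I would simply apply Cauchy–Schwarz:
\begin{equation*}
\int_\Omega \exp(\beta|(u,v)|^2)\,dx \le \Bigl(\int_\Omega \exp(2\beta u^2)\,dx\Bigr)^{1/2}\Bigl(\int_\Omega \exp(2\beta v^2)\,dx\Bigr)^{1/2},
\end{equation*}
and since $u,v\in H^1_0(\Omega)$, the first assertion of Lemma \ref{lemtm1} shows both factors are finite for every $\beta>0$. No size restriction on $\|u\|,\|v\|$ is needed here.

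For the sharp estimate (\ref{tmtr}), the key idea is to choose conjugate Hölder exponents that respect the constraint $\|u\|^2+\|v\|^2\le 1$. Writing $t=\|u\|$, $s=\|v\|$ with $t^2+s^2\le 1$, and assuming first that $ts>0$, I would apply Hölder's inequality with the tailored exponents
\begin{equation*}
\frac{1}{p}=\frac{t^2}{t^2+s^2},\qquad \frac{1}{q}=\frac{s^2}{t^2+s^2},
\end{equation*}
so $1/p+1/q=1$ and $pt^2=qs^2=t^2+s^2\le 1$. Then
\begin{equation*}
\int_\Omega \exp\bigl(\beta(u^2+v^2)\bigr)\,dx \le \Bigl(\int_\Omega \exp\bigl(p\beta\,t^2\,(u/t)^2\bigr)\,dx\Bigr)^{1/p}\Bigl(\int_\Omega \exp\bigl(q\beta\,s^2\,(v/s)^2\bigr)\,dx\Bigr)^{1/q},
\end{equation*}
and since $u/t$ and $v/s$ are unit vectors in $H^1_0(\Omega)$ while $p\beta t^2,\,q\beta s^2\le \beta\le 4\pi$, the scalar bound in Lemma \ref{lemtm1} produces $C(\Omega)^{1/p}\cdot C(\Omega)^{1/q}=C(\Omega)$, as desired. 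The degenerate cases $t=0$ or $s=0$ reduce directly to a single application of Lemma \ref{lemtm1} to the remaining component.

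I do not expect a genuine obstacle here; the only subtle step is picking Hölder exponents whose reciprocals exactly match the normalized masses $t^2/(t^2+s^2)$ and $s^2/(t^2+s^2)$, which is what makes the sharp threshold $4\pi$ pass unchanged from the scalar to the vector setting. Verifying that $p,q\ge 1$ and that $t^2+s^2\le 1$ yields $p\beta t^2\le 4\pi$ is a one-line algebraic check.
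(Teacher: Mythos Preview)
Your proof is correct and follows essentially the same route as the paper: both use Cauchy--Schwarz for the qualitative part (\ref{tmt}), and for (\ref{tmtr}) both choose conjugate exponents $1/p=\|u\|^2$, $1/q=\|v\|^2$ (after normalizing to $\|(u,v)\|=1$) so that the scalar Trudinger--Moser bound applies to each factor with the sharp constant. The only cosmetic difference is that the paper applies Young's inequality $ab\le a^p/p+b^q/q$ to obtain the additive bound $\|u\|^2 C(\Omega)+\|v\|^2 C(\Omega)=C(\Omega)$, whereas you use H\"older to get the multiplicative bound $C(\Omega)^{1/p}C(\Omega)^{1/q}=C(\Omega)$; both yield the same constant.
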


    \begin{proof}
    For each $W=(u,v)\in H^1_0(\Omega,\mathbb{R}^2)$, we have that $|W|^2=|u|^2+|v|^2$. Then by using H\"{o}lder's inequality and Lemma \ref{lemtm1},
    \begin{align*}
    \begin{split}
    \int_{\Omega} \exp(\beta |W|^2)dx
    = & \int_{\Omega} \exp(\beta |u|^2) \exp(\beta |v|^2)dx \\
    \leq & \left(\int_{\Omega}\exp(2\beta |u|^2)dx\right )^{1/2} \left(\int_{\Omega} \exp(2\beta |v|^2)dx\right)^{1/2}<\infty.
    \end{split}
    \end{align*}
    Thus (\ref{tmt}) holds.

    Then we begin to prove (\ref{tmtr}). It is sufficiently enough to prove that
    \begin{equation}\label{aie}
    \sup_{(u,v)\in H^1_0(\Omega,\mathbb{R}^2), \|(u,v)\|=1}\int_{\Omega}\exp\left(4\pi|(u,v)|^2\right) d x \leq C(\Omega).
    \end{equation}
    Indeed, for each $(u,v)\in H^1_0(\Omega,\mathbb{R}^2)$ with $\|(u,v)\|=1$, then $\|(u,v)\|^2=\|u\|^2+\|v\|^2=1$. If $\|u\|=0$ or $\|v\|=0$, then from (\ref{lemtm1}), we know (\ref{aie}) holds also. Otherwise, $\|u\|\neq0$ and $\|v\|\neq0$, then we take $\frac{1}{p}=\|u\|^2$, $\frac{1}{q}=\|v\|^2$ and by the Young's inequality, we have
    \begin{equation*}
    \begin{split}
    \int_{\Omega}\exp\left(4\pi|(u,v)|^2\right) dx
    = & \int_{\Omega}\exp\left(4\pi u^2\right) \exp\left(4\pi v^2\right) dx  \\
    \leq & \int_{\Omega}\frac{\exp\left(4\pi pu^2\right)}{p}dx
    +\int_{\Omega} \frac{\exp\left(4\pi qv^2\right)}{q} dx  \\
    = & \|u\|^2\int_{\Omega}\exp\left(4\pi\left(\frac{u}{\|u\|}\right)^2\right)dx
    +\|v\|^2\int_{\Omega}\exp\left(4\pi\left(\frac{v}{\|v\|}\right)^2\right)dx  \\
    \leq & \|u\|^2C(\Omega)+\|v\|^2C(\Omega)=C(\Omega),
    \end{split}
    \end{equation*}
    thus (\ref{aie}) holds and the proof is complete.
    \end{proof}

    \begin{lemma}\label{lemtm5}
    Let $\{W_n\}$ be a sequence of functions in $H^1_0(\Omega,\mathbb{R}^2)$ with $\| W_n\|=1$ such that $W_n\rightharpoonup W_0\neq(0,0)$ weakly in  $H^1_0(\Omega,\mathbb{R}^2)$. Then for any $0<p<4\pi /(1-\| W_0\|^2)$, we have
    $$
    \sup_n \int_{\Omega}\exp(p|W_n|^2)dx<\infty.
    $$
    \end{lemma}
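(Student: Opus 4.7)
The plan is to adapt the scalar-to-vector strategy used in the proof of Lemma \ref{lemtm4}: split $|W_n|^2=u_n^2+v_n^2$, use H\"{o}lder's inequality to decouple the two components, and apply Lions' Lemma \ref{lemtm3} (or, where it degenerates, the plain Trudinger--Moser Lemma \ref{lemtm1}) to each scalar factor after normalising by its own $H_0^1$-norm.

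Writing $W_n=(u_n,v_n)$ and $W_0=(u_0,v_0)$, I would first pass to a subsequence so that $\|u_n\|^2\to\alpha^2$ and $\|v_n\|^2\to\beta^2$ with $\alpha^2+\beta^2=1$. Weak lower semicontinuity gives $A:=\alpha^2-\|u_0\|^2\ge 0$ and $B:=\beta^2-\|v_0\|^2\ge 0$, and the crucial algebraic identity is
\begin{equation*}
A+B=1-\|W_0\|^2 .
\end{equation*}
In the generic case $A,B>0$, I would choose H\"{o}lder exponents $r=(A+B)/A$ and $s=(A+B)/B$, so that $1/r+1/s=1$ and
\begin{equation*}
\int_{\Omega}\exp(p|W_n|^2)\,dx\;\le\;\Bigl(\int_{\Omega}\exp(pru_n^2)\,dx\Bigr)^{1/r}\Bigl(\int_{\Omega}\exp(psv_n^2)\,dx\Bigr)^{1/s}.
\end{equation*}
Setting $\widetilde{u}_n=u_n/\|u_n\|$ gives $\|\widetilde{u}_n\|=1$ and $\widetilde{u}_n\rightharpoonup u_0/\alpha$; moreover $pr\|u_n\|^2\to p(A+B)\alpha^2/A$, and the hypothesis $p(A+B)<4\pi$ keeps this limit strictly below the Lions threshold $4\pi\alpha^2/A$ (when $u_0\ne 0$) or strictly below $4\pi$ (when $u_0=0$). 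Hence Lemma \ref{lemtm3} (respectively Lemma \ref{lemtm1}) bounds the first factor uniformly in $n$, and symmetry handles the second.

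The degenerate cases reduce to strong convergence. If $A=0$, then $\|u_n\|\to\|u_0\|$ together with $u_n\rightharpoonup u_0$ forces $u_n\to u_0$ strongly in $H_0^1(\Omega)$; writing $u_n=u_0+w_n$ with $w_n\to 0$ and using $u_n^2\le 2u_0^2+2w_n^2$, a further H\"{o}lder split and Lemma \ref{lemtm1} applied to $w_n/\|w_n\|$ (valid once $\|w_n\|$ is small) yield $\sup_n\int_{\Omega}\exp(qu_n^2)\,dx<\infty$ for every $q>0$. Any $1<r<\infty$ then works for the first H\"{o}lder factor, while $s$ is chosen close enough to $1$ that $psB<4\pi$, and the second factor is controlled as before. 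The case $B=0$ is symmetric, and $A=B=0$ forces $\|W_0\|=1$ and $W_n\to W_0$ strongly, so the conclusion is immediate.

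I expect the main obstacle to be the exponent bookkeeping linking the two scalar thresholds: one must choose $r,s$ so that $pr\alpha^2<4\pi\alpha^2/A$ and $ps\beta^2<4\pi\beta^2/B$ hold simultaneously while $1/r+1/s=1$. The identity $A+B=1-\|W_0\|^2$ is precisely what couples the hypothesis $p<4\pi/(1-\|W_0\|^2)$ to both single-component conditions and makes the choice $r=(A+B)/A$, $s=(A+B)/B$ optimal. A secondary subtlety is that Lions' lemma requires a non-zero weak limit, so one has to fall back on Trudinger--Moser when $u_0=0$ or $v_0=0$; and the degenerate cases where one of the components converges strongly in $H_0^1$ must be treated separately, as sketched above.
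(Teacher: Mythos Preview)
Your argument is correct, but the paper follows a different and more streamlined route. Instead of splitting $|W_n|^2=u_n^2+v_n^2$ componentwise and invoking the \emph{scalar} Lions lemma (Lemma~\ref{lemtm3}) on each factor, the paper works entirely at the vector level: from $W_n\rightharpoonup W_0$ and $\|W_n\|=1$ it observes $\|W_n-W_0\|^2\to 1-\|W_0\|^2<4\pi/p$, then uses the pointwise inequality $p|W_n|^2\le p(1+\epsilon^2)|W_n-W_0|^2+p(1+1/\epsilon^2)|W_0|^2$ together with H\"older and the \emph{vector} Trudinger--Moser bound (Lemma~\ref{lemtm4}) applied to $W_n-W_0$. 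This is essentially the standard Lions proof rerun in $H_0^1(\Omega,\mathbb{R}^2)$, and it sidesteps all of your case analysis: there is no need to track the separate limits $\alpha^2,\beta^2$ of $\|u_n\|^2,\|v_n\|^2$, no need to worry about whether $u_0$ or $v_0$ vanishes, and no degenerate cases $A=0$ or $B=0$ to treat. Your approach, by contrast, makes explicit how the vector statement reduces to the scalar Lemmas~\ref{lemtm1} and~\ref{lemtm3}; the price is the exponent bookkeeping and the several sub-cases you correctly identify. One minor point: since you pass to a subsequence at the outset, you should close with a sub-subsequence argument to recover the bound for the full sequence.
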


    \begin{proof}
    Since $W_n\rightharpoonup W_0$ and $\|W_n\|=1$, we conclude that
    \begin{align*}
    \|W_n-W_0\|^2=1-2\langle W_n,W_0\rangle+\|W_0\|^2\rightarrow 1-\|W_0\|^2<\frac {4\pi}{p}
    \end{align*}
    Thus, for large $n$ we have
    \begin{align*}
    p\|W_n-W_0\|^2<4\pi.
    \end{align*}
    Now choosing $q>1$ close to 1 and $\epsilon>0$ satisfying
    \begin{align*}
    qp(1+\epsilon^2)\|W_n-W_0\|^2\leq 4\pi.
    \end{align*}
    Then Lemma \ref{lemtm4} indicates that
    \begin{align}\label{tmlh}
    \int_{\Omega}\exp\left(qp(1+\epsilon^2)|W_n-W_0|^2\right)dx\leq C(\Omega).
    \end{align}
    Moreover, since
    \begin{align*}
    p|W_n|^2 \leq p (1+ \epsilon ^2)|W_n-W_0|^2 + p(1+1/\epsilon ^2)|W_0|^2
    \end{align*}
    which can be proved by  Young's inequality, it follows that
    \begin{align*}
    \exp(p|W_n|^2)\leq \exp(p (1+ \epsilon ^2)|W_n-W_0|^2) \exp(p(1+1/\epsilon ^2)|W_0|^2).
    \end{align*}
    Consequently, by  H\"{o}lder's inequality and (\ref{tmlh}), it holds that
    \begin{align*}
    \int_{\Omega}\exp(p|W_n|^2)dx
    \leq & \left (\int_{\Omega}\exp(qp(1+\epsilon^2)|W_n-W_0|^2)dx\right)^{\frac{1}{q}}\left(\int_{\Omega}\exp(rp(1+1/\epsilon^2)|W_0|^2)dx\right)^{\frac{1}{r}}\\
    \leq & C\left (\int_{\Omega}\exp(rp(1+1/\epsilon^2)|W_0|^2)dx\right)^{\frac{1}{r}},
    \end{align*}
    for large $n$, where $r=\frac{q}{q-1}$. By Lemma \ref{lemtm4}, we know the second term in the last inequality is bounded, thus the proof is complete.
    \end{proof}

    \begin{lemma}\label{lemiiw}
    If $W=(u,v)\in H^1_0(\Omega,\mathbb{R}^2),\beta>0,q>0$ and $\|W\|\leq N$ with $\beta N^2<2\pi$, there exists $C=C(\beta,N,q)>0$ such that
    \begin{align}\label{asd1}
    \int_{\Omega} |W|^q \exp(\beta |W|^2)dx\leq C\|W\|^q.
    \end{align}
    \end{lemma}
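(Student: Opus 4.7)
My plan is to prove the inequality by a straightforward Hölder splitting, using the key fact that the condition $\beta N^{2}<2\pi$ gives exactly enough room for the exponent $2$ to be admissible on the exponential side. The bound is trivial when $W=0$, so we may assume $W\neq(0,0)$ and set $\widetilde W=W/\|W\|$, so that $\|\widetilde W\|=1$ and $|W|^{2}=\|W\|^{2}|\widetilde W|^{2}\leq N^{2}|\widetilde W|^{2}$.

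First I apply Hölder's inequality with conjugate exponents $2,2$:
\begin{equation*}
\int_{\Omega}|W|^{q}\exp(\beta|W|^{2})\,dx
\leq\Bigl(\int_{\Omega}|W|^{2q}\,dx\Bigr)^{1/2}
\Bigl(\int_{\Omega}\exp(2\beta|W|^{2})\,dx\Bigr)^{1/2}.
\end{equation*}
For the exponential factor, using $|W|^{2}\leq N^{2}|\widetilde W|^{2}$ and the hypothesis $2\beta N^{2}<4\pi$, Lemma \ref{lemtm4} applied to $\widetilde W$ (with $\|\widetilde W\|=1$) yields
\begin{equation*}
\int_{\Omega}\exp(2\beta|W|^{2})\,dx
\leq\int_{\Omega}\exp\bigl(2\beta N^{2}|\widetilde W|^{2}\bigr)\,dx
\leq C(\Omega),
\end{equation*}
where the constant depends only on $\beta,N$ and $\Omega$.

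For the polynomial factor, the continuous embedding $H^{1}_{0}(\Omega,\mathbb{R}^{2})\hookrightarrow L^{2q}(\Omega,\mathbb{R}^{2})$ (valid for every $q>0$ in dimension two) gives $\int_{\Omega}|W|^{2q}\,dx\leq C_{q}\|W\|^{2q}$. Multiplying the two estimates produces
\begin{equation*}
\int_{\Omega}|W|^{q}\exp(\beta|W|^{2})\,dx\leq C(\beta,N,q)\,\|W\|^{q},
\end{equation*}
which is the desired inequality (\ref{asd1}). There is really no serious obstacle here: the whole argument reduces to noticing that the threshold $\beta N^{2}<2\pi$ is chosen precisely so that the Hölder exponent $2$ on the exponential keeps the Moser coefficient $2\beta N^{2}$ strictly below the sharp constant $4\pi$ of Lemma \ref{lemtm4}.
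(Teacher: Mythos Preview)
Your proof is correct and follows essentially the same approach as the paper: split via H\"older into a polynomial factor handled by Sobolev embedding and an exponential factor controlled by the Trudinger--Moser inequality. The only cosmetic differences are that the paper takes a general H\"older exponent $r>1$ close to $1$ (with conjugate $s=r/(r-1)$ chosen so that $sq\geq 1$) and then splits $\exp(r\beta|W|^{2})$ into its $u$- and $v$-parts via Young's inequality before invoking the scalar Lemma~\ref{lemtm1}, whereas you fix $r=2$ and apply the vector inequality Lemma~\ref{lemtm4} directly; your choice is the cleaner one since $\beta N^{2}<2\pi$ is tailored exactly to make $2\beta N^{2}<4\pi$ work.
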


    \begin{proof}
    We consider $r> 1$ close to 1 such that $r \beta N^2\leq2 \pi$ and $sq\geq 1$, where $s=\frac {r}{r-1}$. Using H\"{o}lder's inequality and Young's inequality, we have
    \begin{align*}
    \begin{split}
    \int_{\Omega} |W|^q \exp(\beta |W|^2)dx
    \leq & \left (\int_{\Omega}\exp(r\beta |W|^2)dx\right)^{\frac{1}{r}}\|W\|^q_{qs} \\
    \leq & \left (\int_{\Omega}\exp(2r\beta N^2 (\frac{u}{\|W\|})^2)dx+\int_{\Omega}\exp(2r\beta N^2 (\frac{v}{\|W\|})^2)dx\right)^{\frac{1}{r}}\|W\|^q_{qs}
    \end{split}
    \end{align*}
    Finally, using the continuous embedding $H^1_0(\Omega,\mathbb{R}^2)\hookrightarrow L^{sq}(\Omega,\mathbb{R}^2)$ and Lemma \ref{lemtm1}, we conclude that
    \begin{align*}
    \int_{\Omega} |W|^q \exp(\beta |W|^2)dx\leq C\|W\|^q.
    \end{align*}
    \end{proof}

    Finally, in order to deal with the dual Kirchhoff functions, in Section \ref{vfh6} we need the following algebraic inequality:
    \begin{lemma}\label{lemiit}
    If $x,y\geq 0,\ t\geq 1$, then
    \begin{align}\label{iitk}
    (x+y)^t\leq (2^t-1)(x^t+y^t) \leq(2^t-1)(x+y)^t.
    \end{align}
    \end{lemma}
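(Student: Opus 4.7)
The inequality splits cleanly into two parts, and my plan is to handle each with a short, self-contained argument based on convexity and scaling.

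For the right-hand inequality $(2^t-1)(x^t+y^t) \le (2^t-1)(x+y)^t$, the factor $2^t-1$ is positive for $t \ge 1$, so it suffices to prove $x^t+y^t \le (x+y)^t$. I would dispose of the trivial case $x+y=0$, and otherwise normalize by setting $u=x/(x+y)$, $v=y/(x+y)$, so $u,v \in [0,1]$ and $u+v=1$. Since $t \ge 1$ we have $u^t \le u$ and $v^t \le v$, hence $u^t+v^t \le 1$, which after rescaling gives exactly $x^t+y^t \le (x+y)^t$.

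For the left-hand inequality $(x+y)^t \le (2^t-1)(x^t+y^t)$, I would invoke the convexity of $z \mapsto z^t$ on $[0,\infty)$ (valid for $t \ge 1$) applied to the midpoint $(x+y)/2$:
\begin{equation*}
\left(\frac{x+y}{2}\right)^t \le \frac{x^t + y^t}{2},
\end{equation*}
which rearranges to $(x+y)^t \le 2^{t-1}(x^t+y^t)$. The proof then reduces to the numerical inequality $2^{t-1} \le 2^t - 1$ for all $t \ge 1$. This last inequality follows from the observation that both sides agree at $t=1$ (each equals $1$) and that the derivative of the difference $f(t) = (2^t-1) - 2^{t-1} = 2^{t-1} - 1$ is nonnegative on $[1,\infty)$, so $f$ is nondecreasing with $f(1)=0$.

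The argument is elementary and I do not foresee an obstacle; the only thing to watch is the reduction step in the right inequality (where one must be slightly careful to treat $x+y=0$ separately) and making sure $2^{t-1} \le 2^t - 1$ is justified rather than asserted, since that is exactly where the stated constant $2^t-1$ enters and is weaker than the sharp constant $2^{t-1}$ that convexity actually gives.
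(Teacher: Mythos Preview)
Your proof is correct and takes a genuinely different route from the paper's. For the right inequality the paper simply asserts $x^t+y^t\le (x+y)^t$ as obvious, while you justify it by normalizing and using $u^t\le u$ on $[0,1]$; both are fine. The real difference is in the left inequality: the paper fixes $y$, sets $g(x)=(x^t+y^t)-\frac{(x+y)^t}{2^t-1}$, and checks via a derivative computation that $g(y)\ge 0$ and $g'(x)\ge 0$ for $x\ge y$ (using $(x+y)^{t-1}\le 2^{t-1}x^{t-1}$). Your argument instead goes through midpoint convexity of $z\mapsto z^t$ to obtain the sharper bound $(x+y)^t\le 2^{t-1}(x^t+y^t)$, and then reduces everything to the scalar inequality $2^{t-1}\le 2^t-1$. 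Your approach is more conceptual and isolates precisely why the constant $2^t-1$ is not sharp; the paper's approach is a direct calculus verification that avoids invoking convexity but is slightly more computational.
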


    \begin{proof}
    It is obvious that $x^t+y^t \leq(x+y)^t$. If $0\leq y\leq x$, then we fix $y$ and define $g(x)=(x^t+y^t)-\frac{(x+y)^t}{2^t-1}$. It holds that
    \begin{align*}
    \begin{split}
    g(y)= & 2y^t-\frac{2^ty^t}{2^t-1}=\frac{2^t-2}{2^t-1}y^t\geq 0,\\
    g'(x)= & tx^{t-1}-\frac{t(x+y)^{t-1}}{2^t-1}\geq \frac{2^{t-1}-1}{2^t-1}(t x^{t-1})\geq 0,
    \end{split}
    \end{align*}
    thus $g(x)\geq 0$ for all $x\geq y$, i.e. $(x+y)^t\leq (2^t-1)(x^t+y^t)$. On the other hand, if $x\leq y$, the result can be obtained by the same way.
     The proof is complete.
    \end{proof}

\section{{\bfseries The variational framework}}\label{vf}
    We now consider the functional $J_\lambda$ given by
    \begin{equation*}
    J_\lambda(W)=\frac {1}{2}\Big(a_1\|u\|^2+\frac{b_1}{\theta_1}\|u\|^{2\theta_1}\Big)+\frac {1}{2}\Big(a_2\|v\|^2+\frac{b_2}{\theta_2}\|v\|^{2\theta_2}\Big)-\lambda\int_{\Omega}H(x,W)dx,
    \end{equation*}
    where $W=(u,v)\in H^1_0(\Omega,\mathbb{R}^2)$.

    \begin{lemma}\label{ic1}
    Under our assumptions, we have that $J_\lambda$ is well defined and $C^1$ on $H^1_0(\Omega,\mathbb{R}^2)$ for all $\lambda>0$. Moreover,
    \begin{equation*}
    \begin{split}
    \langle J_\lambda'(W),\Phi\rangle=&\big(a_1+b_1\|u\|^{2(\theta_1-1)}\big)\int_{\Omega} \nabla u \cdot \nabla\varphi dx+\big(a_2+b_2\|v\|^{2(\theta_2-1)}\big)\int_{\Omega} \nabla v \cdot \nabla\psi dx \\
    &-\lambda\int_{\Omega}\Phi \cdot \nabla H(x,W)dx,
    \end{split}
    \end{equation*}
    where $\Phi=(\varphi,\psi)\in H^1_0(\Omega,\mathbb{R}^2)$.
    \end{lemma}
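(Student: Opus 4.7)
The plan is to split $J_\lambda = J_K - \lambda I$, where the Kirchhoff part
$$J_K(W) = \frac{1}{2}\Big(a_1\|u\|^2 + \frac{b_1}{\theta_1}\|u\|^{2\theta_1}\Big) + \frac{1}{2}\Big(a_2\|v\|^2 + \frac{b_2}{\theta_2}\|v\|^{2\theta_2}\Big)$$
is a smooth composition of the Hilbert norms $\|u\|,\|v\|$ and is therefore trivially of class $C^1$ on $H^1_0(\Omega,\mathbb{R}^2)$ with derivative giving exactly the two Kirchhoff terms in the claimed formula. All the essential work concerns the nonlinear functional $I(W) = \int_\Omega H(x,W)\,dx$, and I would verify in turn: (a) well-definedness, (b) existence of the G\^ateaux derivative $\langle I'(W),\Phi\rangle = \int_\Omega \Phi \cdot \nabla H(x,W)\,dx$, and (c) continuity of $I'$.

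For (a), I would first use $(SL)$ or $(AL)$ together with Remark \ref{remfc} to conclude $H(x,0,0)=0$, so that $H(x,W) = \int_0^1 W \cdot \nabla H(x,tW)\,dt$. The growth hypothesis $(TM)^{sc}$ or $(TM)^{c}$ on $H_u,H_v$ then gives, for every $\beta>0$ (respectively $\beta > \beta_0$), a constant $C_\beta$ with $|\nabla H(x,W)| \leq C_\beta(|W| + \exp(\beta|W|^2))$, and hence $|H(x,W)| \leq C_\beta |W|(|W| + \exp(\beta|W|^2))$. For each fixed $W \in H^1_0(\Omega,\mathbb{R}^2)$, Lemma \ref{lemtm4} gives $\exp(\beta|W|^2) \in L^1(\Omega)$, and H\"older's inequality combined with the Sobolev embedding yields $I(W) < \infty$.

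For (b), fix $W,\Phi$ and $t \in (-1,1)\setminus\{0\}$, and apply the mean value theorem componentwise to write $t^{-1}(H(x,W+t\Phi) - H(x,W)) = \Phi \cdot \nabla H(x, W + \sigma_t(x) t\Phi)$ for some $\sigma_t(x)\in (0,1)$. Using $|W+\sigma_t t\Phi|^2 \leq 2|W|^2 + 2|\Phi|^2$ and the exponential growth bound on $\nabla H$, the integrand is dominated by a function of the form $C|\Phi|(1 + |W| + |\Phi| + \exp(2\beta|W|^2)\exp(2\beta|\Phi|^2))$, whose integrability follows from Lemma \ref{lemtm4} together with another application of H\"older's inequality. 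Dominated convergence then produces the claimed formula for $\langle I'(W),\Phi\rangle$, and linearity and continuity in $\Phi$ are immediate from the same estimates.

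For (c), suppose $W_n \to W$ in $H^1_0(\Omega,\mathbb{R}^2)$. Passing to a subsequence, $W_n \to W$ pointwise a.e. and is dominated in $L^p$ for every $p \geq 1$. To upgrade this to convergence of $\int \Phi \cdot \nabla H(x,W_n)\,dx$ uniformly in $\Phi$ on the unit ball of $H^1_0(\Omega,\mathbb{R}^2)$, I would invoke Vitali's convergence theorem: equi-integrability of $\{|\nabla H(x,W_n)||\Phi|\}$ follows by choosing the growth parameter $\beta$ (respectively $\beta$ close to $\beta_0$) and H\"older exponents $r>1$ so that $\int_\Omega \exp(r\beta|W_n|^2)\,dx$ remains uniformly bounded, which is precisely what Lemma \ref{lemiiw} delivers once $\{W_n\}$ is truncated into a ball small enough that $r\beta\|W_n\|^2 < 2\pi$, and otherwise handled by the scaling in Lemma \ref{lemtm5}. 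The main obstacle is exactly this last step: the exponential nonlinearity rules out a single pointwise $L^1$-majorant valid along the whole sequence, so one must exploit the Trudinger-Moser inequality together with uniform integrability instead of classical dominated convergence.
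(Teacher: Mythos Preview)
Your overall strategy is sound and, in fact, considerably more detailed than the paper's. The paper disposes of well-definedness exactly as you do in (a) (growth bound on $\nabla H$ from $(TM)^{sc}$/$(TM)^c$, behaviour near $0$ from $(SL)$/$(AL)$, then Lemma~\ref{lemiiw} and Sobolev embedding), and for the $C^1$ part it simply writes ``standard arguments together with Lemma~\ref{lem3.1}'' and cites Berestycki--Lions and Chabrowski. The key difference is in your step (c): the paper does \emph{not} go through Vitali/uniform integrability. Instead it uses Lemma~\ref{lem3.1}, the converse dominated convergence theorem in $H^1_0$: if $W_n\to W$ strongly, a subsequence is pointwise dominated by some $g\in H^1_0(\Omega,\mathbb{R}^2)$, and then $|\nabla H(x,W_n)|\le C\exp(\beta|g|^2)\in L^1(\Omega)$ by Lemma~\ref{lemtm4}, so ordinary dominated convergence suffices. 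This is shorter and avoids the delicate bookkeeping with exponents.

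Your route via equi-integrability also works, but the way you describe it is slightly garbled. You cannot ``truncate $\{W_n\}$ into a ball small enough that $r\beta\|W_n\|^2<2\pi$'': the sequence converges to a fixed $W$ whose norm may be large, so Lemma~\ref{lemiiw} is not the relevant tool here. Nor does Lemma~\ref{lemtm5} apply as stated, since it concerns normalized sequences with a nonzero weak limit. What actually gives the uniform bound is the Young-inequality splitting used \emph{inside} the proof of Lemma~\ref{lemtm5}: write $|W_n|^2\le (1+\epsilon^2)|W_n-W|^2+(1+\epsilon^{-2})|W|^2$, use that $\|W_n-W\|\to 0$ so Trudinger--Moser (Lemma~\ref{lemtm4}) controls the first factor uniformly in $n$, and note the second factor is a fixed $L^1$ function. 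If you rewrite step (c) along these lines, your argument goes through; alternatively, invoking Lemma~\ref{lem3.1} as the paper does is the cleanest fix.
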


 \begin{proof}
 We have that $H_u$ and $H_v$ are continuous and have subcritical (or critical) growth $(TM)^{sc}$ (or $(TM)^{c}$). Thus, giving $\beta>0$ (or $\beta>\beta_0$), there exists $C>0$ such that $|H_u(x,W)|,\ |H_v(x,W)|\leq Ce^{\beta |W|^2}$ for all $(x,W)\in\Omega \times \mathbb{R}^2$. Then,
     \begin{equation} \label{3.1}
    |\nabla H(x,W)|\leq |H_u(x,W)|+|H_v(x,W)|\leq C_1 \exp(\beta |W|^2),
     \ \ \text{for all }(x,W)\in\Omega \times \mathbb{R}^2.
    \end{equation}
By $(SL)$ (or $(AL)$), given $\epsilon>0$, there exists $\delta>0$ such that
    \begin{equation}\label{3.2}
    |H(x,W)|\leq \frac { a_{min}+\epsilon}{2} |W|^2
     \ \ \text{always that }|W|<\delta.
    \end{equation}
Thus, using (\ref{3.1}), (\ref{3.2}) and $(H_3)$, we have
    \begin{align*}
    \int_{\Omega}|H(x,W)|dx \leq \frac { a_{min}+\epsilon}{2}\int_{\Omega}|W|^2 dx + C_1\int_{\Omega}|W|\exp(\beta |W|^2)dx.
    \end{align*}
Considering the continuous imbedding $H^1_0(\Omega,\mathbb{R}^2)\hookrightarrow L^s(\Omega,\mathbb{R}^2)$ for $s\geq 1$ and using Lemma \ref{lemiiw}, it follows that $H(x,W)\in L^1(\Omega)$ which implies that $J_\lambda$ is well defined. Using standard arguments together with Lemma \ref{lem3.1} underlying (see \cite[Theorem A.VI]{bl} and \cite{c}), we can see that $J_\lambda\in C^1\big(H^1_0(\Omega,\mathbb{R}^2),\mathbb R\big)$ with
    \begin{equation*}
    \begin{split}
    \langle J_\lambda'(W),\Phi\rangle=&\big(a_1+b_1\|u\|^{2(\theta_1-1)}\big)\int_{\Omega} \nabla u \cdot \nabla\varphi dx+\big(a_2+b_2\|v\|^{2(\theta_2-1)}\big)\int_{\Omega} \nabla v \cdot \nabla\psi dx \\
    &-\lambda\int_{\Omega}\Phi \cdot \nabla H(x,W)dx,
    \end{split}
    \end{equation*}
    where $\Phi=(\varphi,\psi)\in H^1_0(\Omega,\mathbb{R}^2)$.
    \end{proof}

    From Lemma \ref{ic1}, we have that the critical points of the functional $J_\lambda$ are precisely the weak solutions of problem (\ref{P}).
    In the next two lemmas we check that the functional $J_\lambda$ satisfies the geometric conditions of the Mountain-pass theorem.

    \begin{lemma}\label{lemgc4}
    Suppose that $(H_2),\ (SL)$ hold and the functions $H_u,\ H_v$ have subcritical (or critical) growth $(TM)^{sc}$ (or $(TM)^{c}$). Then, for each $\lambda>0$, there exist positive number $\rho_0$ and $\tau_0$ such that
    \begin{align*}
    J_\lambda(W)\geq \tau_0,\ \forall W\in H^1_0(\Omega,\mathbb{R}^2)\ \ \mbox{with} \ \ \|W\|=\rho_0.
    \end{align*}
    \end{lemma}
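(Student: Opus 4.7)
The plan is to derive a pointwise upper bound of the form
$$H(x,W)\le \epsilon|W|^2+C|W|^q\exp(\beta|W|^2)$$
for some $q>2$, integrate it via the Trudinger--Moser calculus of Lemma \ref{lemiiw}, and then exploit the fact that the quadratic piece $\tfrac{a_{\min}}{2}\|W\|^2$ of $J_\lambda$ dominates near the origin. The nonnegative Kirchhoff terms $\tfrac{b_i}{2\theta_i}\|\cdot\|^{2\theta_i}$ will simply be discarded in the lower bound.

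First I would fix $\epsilon>0$ and invoke $(SL)$ to find $\delta>0$ such that $H(x,W)\le\epsilon|W|^2$ whenever $|W|\le\delta$. For $|W|\ge\delta$, I would use $(H_2)$ together with the growth of $H_u,H_v$: since $H\le M_0|\nabla H|$ in that range and $|\nabla H(x,W)|\le C\exp(\beta|W|^2)$ (with $\beta>0$ arbitrary in the subcritical case and with any chosen $\beta>\beta_0$ in the critical case), I absorb the factor from the range $\delta\le|W|\le S_0$ into a constant and obtain, uniformly on $\Omega\times\mathbb{R}^2$,
\begin{equation*}
H(x,W)\le \epsilon|W|^2+C|W|^q\exp(\beta|W|^2),
\end{equation*}
for any fixed $q>2$ (the factor $|W|^q/\delta^q\ge 1$ when $|W|\ge\delta$).

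Next I would integrate. Choose $\rho_0>0$ small enough that $\beta\rho_0^2<2\pi$. For every $W\in H^1_0(\Omega,\mathbb{R}^2)$ with $\|W\|\le\rho_0$, Lemma \ref{lemiiw} yields
$$\int_\Omega|W|^q\exp(\beta|W|^2)\,dx\le C\|W\|^q,$$
while (\ref{defl1}) gives $\int_\Omega|W|^2\,dx\le\lambda_1^{-1}\|W\|^2$. Combining and inserting into $J_\lambda$, and using $\tfrac{a_1}{2}\|u\|^2+\tfrac{a_2}{2}\|v\|^2\ge\tfrac{a_{\min}}{2}\|W\|^2$ together with the nonnegativity of the Kirchhoff terms,
\begin{equation*}
J_\lambda(W)\ge\left(\frac{a_{\min}}{2}-\frac{\lambda\epsilon}{\lambda_1}\right)\|W\|^2-\lambda C\|W\|^q.
\end{equation*}

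Finally I would fix $\epsilon=\tfrac{a_{\min}\lambda_1}{4\lambda}$ so that the quadratic coefficient equals $\tfrac{a_{\min}}{4}>0$, and then possibly shrink $\rho_0$ further so that $\lambda C\rho_0^{q-2}\le\tfrac{a_{\min}}{8}$ (while preserving $\beta\rho_0^2<2\pi$). For $\|W\|=\rho_0$ this gives $J_\lambda(W)\ge\tau_0:=\tfrac{a_{\min}}{8}\rho_0^2>0$, as required. The only mild subtlety is making the choice of $\beta$ uniform: in the subcritical case any $\beta$ works and can be chosen so that $\beta\rho_0^2<2\pi$; in the critical case $\beta$ must exceed $\beta_0$, so one chooses $\beta$ slightly above $\beta_0$ and then takes $\rho_0^2<2\pi/\beta$. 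No other step presents a real obstacle — this is the standard mountain-pass geometry at the origin, with the exponential nonlinearity controlled exactly by Lemma \ref{lemiiw}.
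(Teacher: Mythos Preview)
Your proof is correct and follows essentially the same route as the paper: derive the pointwise bound $H(x,W)\le \epsilon|W|^2+C|W|^q\exp(\beta|W|^2)$ from $(SL)$, $(H_2)$ and the growth hypothesis, control the exponential term via Lemma~\ref{lemiiw} on a small ball, and conclude from the resulting inequality $J_\lambda(W)\ge\big(\tfrac{a_{\min}}{2}-\tfrac{\lambda\epsilon}{\lambda_1}\big)\|W\|^2-\lambda C\|W\|^q$. The only difference is that you make the choices of $\epsilon$ and $\rho_0$ explicit, whereas the paper leaves them implicit.
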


    \begin{proof}
    By $(SL)$, given $0<\epsilon<\frac{\lambda_1 a_{min}} {2\lambda}$ there exists $\delta>0$ such that
    \begin{align*}
    H(x,W)\leq \epsilon|W|^2,
    \end{align*}
    always that $|W|<\delta$. On the other hand, for $\beta>0$ (subcritical case) or $\beta>\beta_0$ (critical case), we have that there exists $C_1>0$ such that $|H_u(x,W)|,\ |H_v(x,W)|\leq C_1|W|^{q-1} \exp(\beta |W|^2)$ for all $|W|\geq \delta$ with $q>2$. Thus, by $(H_2)$, we have
    \begin{align}\label{gc4.1}
    H(x,W) \leq \epsilon|W|^2+C_2|W|^q \exp(\beta |W|^2),\ \forall (x,W)\in \Omega\times \mathbb{R}^2.
    \end{align}
    Thus,
    \begin{align*}
    \begin{split}
    J_\lambda(W)=&\frac {1}{2}\Big(a_1\|u\|^2+\frac{b_1}{\theta_1}\|u\|^{2\theta_1}\Big)+\frac {1}{2}\Big(a_2\|v\|^2+\frac{b_2}{\theta_2}\|v\|^{2\theta_2}\Big)-\lambda\int_{\Omega}H(x,W)dx \\
     \geq &\frac{1}{2}a_{min}\|W\|^2-\lambda \epsilon \int_{\Omega}|W|^2dx-\lambda C_2\int_{\Omega}|W|^q \exp(\beta |W|^2)dx \\
     \geq &\Big(\frac{a_{min}}{2}-\frac {\lambda \epsilon}{\lambda_1}\Big)\|W\|^2-\lambda C_2\int_{\Omega}|W|^q \exp(\beta |W|^2)dx.
    \end{split}
    \end{align*}
    By Lemma \ref{lemiiw}, there exists $N>0$ such that $\beta N^2<2\pi$ and we take $\|W\|\leq N$, there exists $C>0$ such that $\int_{\Omega}|W|^q \exp(\beta |W|^2)dx\leq C\|W\|^q$. Therefore,
    \begin{align*}
    J_\lambda(W)\geq \Big(\frac{a_{min}}{2}-\frac {\lambda \epsilon}{\lambda_1}\Big)\|W\|^2-\lambda C_3\|W\|^q,
    \end{align*}
    Since $q>2$, there exists $\kappa_1>0$ such that $\Big(\frac{a_{min}}{2}-\frac {\lambda \epsilon}{\lambda_1}\Big)\kappa^2_1-\lambda C_3\kappa^q_1>0$.
    Consequently, taking $\rho_0=\min\{N,\kappa_1\}>0$, we can get that $J_\lambda(W)\geq\tau_0$ whenever $\|W\|=\rho_0$, where $\tau_0=\Big(\frac{a_{min}}{2}-\frac {\lambda \epsilon}{\lambda_1}\Big)\rho^2_0-\lambda C_3\rho^q_0>0$. Thus, the proof is finished.
    \end{proof}

    \begin{lemma}\label{lemgc2}
    Suppose that $(H_1)$, $(H_2)$ hold and $H_u,\ H_v$ have subcritical (or critical) growth $(TM)^{sc}$ (or $(TM)^{c}$). Then there exists $E_0\in H^1_0(\Omega,\mathbb{R}^2)$ with $\|E_0\|>\rho_0$ such that $J_\lambda(E_0)<0$.
    \end{lemma}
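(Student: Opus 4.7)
The plan is the standard one: fix a convenient test function and exhibit the required $E_0$ as a large dilation of it, using the polynomial-type lower bound on $H$ furnished by $(H_2)$ to dominate the Kirchhoff contributions.

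First I would pick any $W_1=(u_1,v_1)\in H^1_0(\Omega,\mathbb{R}^2)$ with $u_1\not\equiv 0$ and $v_1\not\equiv 0$ (for instance smooth compactly supported positive functions, which also makes $(H_1)$ give $H(x,tW_1)\ge 0$ for $t>0$). Next I would invoke Remark \ref{remfc}: since $(H_2)$ holds, the pointwise estimate (\ref{rf3arb}) is available for \emph{every} $\xi>0$; I would choose and fix some $\xi>2\theta_{\max}$, obtaining constants $C_1,C_2>0$ with
\[
H(x,W)\ \ge\ C_1|W|^{\xi}-C_2\qquad \text{for all }(x,W)\in\Omega\times\mathbb{R}^2.
\]

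For $t>0$ this yields
\begin{align*}
J_\lambda(tW_1)
&\le \tfrac12\bigl(a_1 t^{2}\|u_1\|^{2}+\tfrac{b_1}{\theta_1}t^{2\theta_1}\|u_1\|^{2\theta_1}\bigr)
+\tfrac12\bigl(a_2 t^{2}\|v_1\|^{2}+\tfrac{b_2}{\theta_2}t^{2\theta_2}\|v_1\|^{2\theta_2}\bigr)\\
&\qquad-\lambda C_1 t^{\xi}\!\int_\Omega|W_1|^{\xi}\,dx+\lambda C_2|\Omega|.
\end{align*}
Because $\xi>2\theta_{\max}\ge \max\{2\theta_1,2\theta_2,2\}$ and $\int_\Omega|W_1|^{\xi}dx>0$, the negative $t^{\xi}$ term strictly dominates every positive term as $t\to\infty$, so $J_\lambda(tW_1)\to-\infty$. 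Consequently one may choose $t_0>0$ so large that $\|t_0 W_1\|=t_0\|W_1\|>\rho_0$ and $J_\lambda(t_0 W_1)<0$, and set $E_0:=t_0 W_1$.

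The only delicate point—and the main reason this simple argument works—is that we must beat the $t^{2\theta_{\max}}$ growth coming from the Kirchhoff terms. This is precisely what $(H_2)$ (via Remark \ref{remfc}) gives us for free: the freedom to choose an arbitrarily large exponent $\xi$ in the lower bound for $H$. Note that we never need the subcritical/critical upper bound on $H_u,H_v$ here; those hypotheses, listed in the statement, are just the standing assumptions on the nonlinearity and play no role in producing the downward direction.
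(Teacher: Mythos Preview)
Your argument is correct and is essentially the same as the paper's: both fix a nonzero test function, invoke the lower bound $H(x,W)\ge C_1|W|^{\xi}-C_2$ from Remark~\ref{remfc} with $\xi>2\theta_{\max}$, and conclude $J_\lambda(tW)\to-\infty$ as $t\to\infty$. Your observation that the growth hypotheses $(TM)^{sc}$/$(TM)^{c}$ play no role here is also accurate.
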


    \begin{proof}
    (\ref{rf3arb}) indicates that $H(x,W)\geq C_1|W|^{\xi}-C_2$ for all $(x,W)\in \Omega\times \mathbb{R}^2$, and here we choose $\xi>2\theta_{max}$.
    Thus,
    \begin{align*}
    \begin{split}
    J_\lambda(tW)=&\frac {1}{2}\Big(a_1\|u\|^2 t^2+\frac{b_1}{\theta_1}\|u\|^{2\theta_1} t^{2\theta_1}\Big)+\frac {1}{2}\Big(a_2\|v\|^2 t^2+\frac{b_2}{\theta_2}\|v\|^{2\theta_2}t^{2\theta_2}\Big)-\lambda\int_{\Omega}H(x,W)dx \\
    \leq &\frac {1}{2}\Big(a_1\|u\|^2+a_2\|v\|^2\Big) t^2+\frac{b_1}{2\theta_1}\|u\|^{2\theta_1}t^{2\theta_1}+\frac{b_2}{2\theta_2}\|v\|^{2\theta_2} t^{2\theta_2}\\
    &-C_1t^{\xi} \int_{\Omega}|W|^{\xi} dx+C_2|\Omega|, \\
    \end{split}
    \end{align*}
    for all $t>0$, $W\in H^1_0(\Omega,\mathbb{R}^2)\backslash \{(0,0)\}$, and $|\Omega|$ is the Lebesgue measure of $\Omega$. This yields that $J_\lambda(tW)\rightarrow -\infty$ as $t\rightarrow+\infty$. Setting $E_0=tW$ with $t$ large enough such that $J_\lambda(E_0)<0$ with $\|E_0\|>\rho_0$.
    \end{proof}

\section{{\bfseries On Palais-Smale sequences}}\label{ps}

    In order to prove that a Palais-Smale sequence converges to a weak solution of problem (\ref{P}), we need to establish the following lemma:

    \begin{lemma}\label{lem4.1}
    Suppose $(H_1)$, $(\overline{H_2})$ hold and $H_u,\ H_v$ have subcritical (or critical) growth $(TM)^{sc}$ (or $(TM)^{c}$). For each $\lambda>0$, and let $\{W_n\}\ in\ H^1_0(\Omega,\mathbb{R}^2)$ be the Palais-Smale sequence for functional $J_\lambda$ at finite level. Then there exists $C>0$ such that
    \begin{align}\label{psbdd}
    \|W_n\|\leq C,\ \ \ \ \int_{\Omega}u_n H_u(x,W_n)dx\leq C, \ \ \ \ \int_{\Omega}v_n H_v(x,W_n)dx\leq C.
    \end{align}
    \end{lemma}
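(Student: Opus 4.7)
The plan is to exploit the Ambrosetti--Rabinowitz type condition $(\overline{H_2})$ in the standard way, by taking a suitable linear combination of the energy bound and the derivative bound along the sequence itself.

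First I would write down what the Palais--Smale property gives us: there is a constant $C_0>0$ and a sequence $\epsilon_n\to 0$ with
\begin{equation*}
|J_\lambda(W_n)|\leq C_0,\qquad |\langle J_\lambda'(W_n),W_n\rangle|\leq \epsilon_n\|W_n\|.
\end{equation*}
Then I would form the combination $\mu J_\lambda(W_n)-\langle J_\lambda'(W_n),W_n\rangle$, which, after cancellations, equals
\begin{equation*}
a_1\Bigl(\tfrac{\mu}{2}-1\Bigr)\|u_n\|^2+b_1\Bigl(\tfrac{\mu}{2\theta_1}-1\Bigr)\|u_n\|^{2\theta_1}+a_2\Bigl(\tfrac{\mu}{2}-1\Bigr)\|v_n\|^2+b_2\Bigl(\tfrac{\mu}{2\theta_2}-1\Bigr)\|v_n\|^{2\theta_2}+\lambda\int_\Omega\bigl[W_n\cdot\nabla H(x,W_n)-\mu H(x,W_n)\bigr]dx.
\end{equation*}
Because $\mu\geq 2\theta_{max}>2$, all the coefficients in front of the norm terms are nonnegative, and the coefficient $(\mu/2-1)$ in front of $\|u_n\|^2$ and $\|v_n\|^2$ is strictly positive.

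Next I would handle the integral remainder. On the set $\{|W_n|\geq R\}$ condition $(\overline{H_2})$ makes the integrand nonnegative, so it can simply be dropped; on the complementary set $\{|W_n|<R\}$ continuity of $H$ and $\nabla H$ gives a pointwise bound depending only on $R$, and hence an $L^1$-bound of the form $C_R|\Omega|$. Combining, the integral is bounded below by $-C_1$. Substituting back, I obtain
\begin{equation*}
a_{min}\Bigl(\tfrac{\mu}{2}-1\Bigr)\|W_n\|^2\leq \mu C_0+\lambda C_1+\epsilon_n\|W_n\|,
\end{equation*}
and since the leading coefficient is strictly positive, a standard Young/absorption argument yields $\|W_n\|\leq C$, which is the first estimate in \eqref{psbdd}.

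For the remaining two bounds, I would revisit $\langle J_\lambda'(W_n),W_n\rangle=o_n(1)$, which together with the boundedness of $\|W_n\|$ (and hence of the Kirchhoff pieces) gives
\begin{equation*}
\lambda\int_\Omega W_n\cdot\nabla H(x,W_n)\,dx=\bigl(a_1\|u_n\|^2+b_1\|u_n\|^{2\theta_1}\bigr)+\bigl(a_2\|v_n\|^2+b_2\|v_n\|^{2\theta_2}\bigr)+o_n(1)\leq C_2.
\end{equation*}
Since $W_n\cdot\nabla H(x,W_n)=u_nH_u(x,W_n)+v_nH_v(x,W_n)$ and both summands are pointwise nonnegative by $(H_1)$, each integral is separately controlled by $C_2/\lambda$, yielding the last two inequalities. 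The main (and only slightly delicate) obstacle is the treatment of the small-$|W_n|$ region where $(\overline{H_2})$ does not apply; but since $R$ is a fixed constant and $\Omega$ is bounded, the contribution there is uniformly bounded, so the absorption argument still closes.
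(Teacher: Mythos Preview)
Your proof is correct and follows essentially the same approach as the paper: forming the combination $J_\lambda(W_n)-\tfrac{1}{\mu}\langle J_\lambda'(W_n),W_n\rangle$ (you scale by $\mu$, which is immaterial), using $(\overline{H_2})$ on $\{|W_n|\ge R\}$ and a uniform $L^\infty$ bound on $\{|W_n|<R\}$, and then invoking $(H_1)$ to split the integral $\int W_n\cdot\nabla H$ into its two nonnegative pieces. The only cosmetic difference is that the paper records the small-$|W_n|$ contribution as a single constant $C_\mu|\Omega|$ rather than spelling out the set decomposition.
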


    \begin{proof}
    Let $\{W_n\} \subset H^1_0(\Omega,\mathbb{R}^2)$ be a sequence such that $J_\lambda(W_n)\rightarrow c$ and $J_\lambda'(W_n)\rightarrow 0$ with $|c|<\infty$, we can take this as follows:
    \begin{equation}\label{4.1}
    \begin{split}
    J_\lambda(W_n)=&\frac {1}{2}\Big(a_1\|u_n\|^2+\frac{b_1}{\theta_1}\|u_n\|^{2\theta_1}\Big)+\frac {1}{2}\Big(a_2\|v_n\|^2+\frac{b_2}{\theta_2}\|v_n\|^{2\theta_2}\Big)-\lambda\int_{\Omega}H(x,W_n)dx \\
    =&c+\delta_n,\ \
    \end{split}
    \end{equation}
    where $\delta_n\rightarrow 0$ as $n\rightarrow \infty$, and
    \begin{equation}\label{4.2}
    \begin{split}
    \langle J_\lambda'(W_n),W_n\rangle=&\big(a_1+b_1\|u_n\|^{2(\theta_1-1)}\big)\|u_n\|^2+\big(a_2+b_2\|v_n\|^{2(\theta_2-1)}\big)\|v_n\|^2 \\
    &-\lambda\int_{\Omega}W_n\cdot \nabla H(x,W_n)dx=o(\|W_n\|).
    \end{split}
    \end{equation}
    Thus for $n$ large enough, by $(\overline{H_2})$ we have
    \begin{align*}
    \begin{split}
    C_1+\|W_n\|\geq &J_\lambda(W_n)-\frac {1}{\mu} \langle J_\lambda'(W_n),W_n\rangle \\
    =&\frac {1}{2}\Big(a_1\|u_n\|^2+\frac{b_1}{\theta_1}\|u_n\|^{2\theta_1}\Big)+\frac {1}{2}\Big(a_2\|v_n\|^2+\frac{b_2}{\theta_2}\|v_n\|^{2\theta_2}\Big) \\
    &-\frac{1}{\mu}\big(a_1+b_1\|u_n\|^{2(\theta_1-1)}\big)\|u_n\|^2-\frac{1}{\mu}\big(a_2+b_2\|v_n\|^{2(\theta_2-1)}\big)\|v_n\|^2  \\
    &+\int_{\Omega}\left (\frac{1}{\mu} W_n\cdot \nabla H(x,W_n) -H(x,W_n)\right )dx \\
    \geq &\Big(\frac{1}{2}-\frac{1}{\mu}\Big)\Big(a_1\|u_n\|^2+a_2\|v_n\|^2\Big)-C_{\mu}|\Omega|  \\
    \geq&\frac {\mu-2}{2\mu}a_{min}\|W_n\|^2-C_{\mu}|\Omega|,
    \end{split}
    \end{align*}
    for some $C_1>0$, where $C_\mu=\sup\{|H(x,W)-\frac{1}{\mu} W\cdot \nabla H(x,W)|: (x,|W|)\in \overline{\Omega}\times [0,R]\}$. Then $\|W_n\|$ is bounded since $\mu\geq 2\theta_{max}>2$ and $a_{min}>0$. From (\ref{4.1}) and (\ref{4.2}), it can be concluded directly that there exists a constant $C>0$ such that (\ref{psbdd}) holds since $(H_1)$ holds. 
    \end{proof}

    \begin{lemma}\label{lem4.2}
    Suppose $\Omega$ is a bounded domain in $\mathbb{R}^2$. Let $\{u_n\}$ be in $L^1(\Omega)$ such that $u_n\rightarrow u$ in $L^1(\Omega)$ and $f(x,s)$ be a continuous function. Then $f(x,u_n)\rightarrow f(x,u)$ in  $L^1(\Omega)$ provided that $f(x,u_n)\in L^1(\Omega)$ for all $n$ and $\int_{\Omega}|f(x,u_n)u_n|dx\leq C$.
    \end{lemma}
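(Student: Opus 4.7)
The plan is to combine a truncation argument with standard a.e.\ convergence tools, using the weighted bound $\int_\Omega |u_n f(x,u_n)|dx\le C$ to control regions where $|u_n|$ is large. First, since $u_n\to u$ in $L^1(\Omega)$, a standard subsequence extraction yields $u_{n_k}\to u$ a.e.\ in $\Omega$. By the continuity of $f$, this gives $f(x,u_{n_k}(x))\to f(x,u(x))$ a.e.\ in $\Omega$. A subsequence-of-subsequence argument will allow us to upgrade the final conclusion from a subsequence back to the full sequence, so I work with the subsequence throughout.

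The decisive observation is a truncation estimate: for every measurable $E\subset\Omega$ and every $M>0$,
\begin{equation*}
\int_E |f(x,u_n)|\,dx
= \int_{E\cap\{|u_n|\le M\}} |f(x,u_n)|\,dx + \int_{E\cap\{|u_n|>M\}} |f(x,u_n)|\,dx.
\end{equation*}
On the second piece I use $|f(x,u_n)|\le M^{-1}|u_n f(x,u_n)|$, so the hypothesis bounds it by $C/M$. On the first piece, continuity of $f$ on the compact set $\overline{\Omega}\times[-M,M]$ gives $|f(x,s)|\le K_M$, so it is at most $K_M |E|$. In particular $\{f(\cdot,u_n)\}$ is uniformly integrable, and Fatou's lemma applied after this splitting (with $E=\Omega$) shows $f(\cdot,u)\in L^1(\Omega)$.

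For the $L^1$ convergence, fix $\varepsilon>0$. Choose $M$ so large that $C/M<\varepsilon/4$ and, via absolute continuity of the integral of $f(\cdot,u)$, so that $\int_{\{|u|>M\}}|f(x,u)|\,dx<\varepsilon/4$. Writing
\begin{equation*}
\int_\Omega |f(x,u_n)-f(x,u)|\,dx
\le \int_{\{|u_n|>M\}}|f(x,u_n)|\,dx + \int_{\{|u|>M\}}|f(x,u)|\,dx + \int_{A_M}|f(x,u_n)-f(x,u)|\,dx,
\end{equation*}
where $A_M=\{|u_n|\le M\}\cap\{|u|\le M\}$, the first two terms are each less than $\varepsilon/4$. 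On $A_M$, the integrand is uniformly bounded by $2K_M$, converges to zero a.e., and the measure of $\Omega$ is finite, so Egorov's theorem (or dominated convergence) handles the third term for $n$ large.

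The only delicate point is the treatment of the "tails" $\{|u_n|>M\}$, which is exactly where the weighted hypothesis $\int_\Omega |u_n f(x,u_n)|\,dx\le C$ is indispensable; without it one could not uniformly control $\int |f(x,u_n)|\,dx$, since $f(x,u_n)$ itself need not be uniformly integrable a priori. Once this tail estimate is in hand, the rest is a routine bounded-convergence argument, and the uniqueness of the limit promotes the subsequence result to the full sequence.
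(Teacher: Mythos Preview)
Your argument is essentially the classical one from de~Figueiredo--Miyagaki--Ruf \cite{dmr1}, which is precisely what the paper invokes (the paper does not supply its own proof of this lemma, it just cites that reference). The key idea---using the weighted bound $\int_\Omega|u_nf(x,u_n)|\,dx\le C$ to get $\int_{\{|u_n|>M\}}|f(x,u_n)|\,dx\le C/M$, hence uniform integrability of $\{f(\cdot,u_n)\}$, combined with a.e.\ convergence along a subsequence---is exactly the right one, and your subsequence-of-subsequence remark correctly promotes the conclusion to the full sequence.

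One small imprecision: the displayed three-term splitting of $\int_\Omega|f(x,u_n)-f(x,u)|\,dx$ is not literally an inequality as written, because on the complement of $A_M$ you need both $|f(x,u_n)|$ and $|f(x,u)|$, so the cross terms $\int_{\{|u_n|>M\}}|f(x,u)|$ and $\int_{\{|u|>M\}}|f(x,u_n)|$ are missing. These are easily handled (the first by absolute continuity of $\int|f(\cdot,u)|$ together with $|\{|u_n|>M\}|\le \|u_n\|_1/M$ bounded uniformly in $n$; the second by splitting again into $\{|u_n|>M\}$ and $\{|u_n|\le M\}$), but the cleanest fix is simply to stop once you have established uniform integrability and a.e.\ convergence on a set of finite measure and invoke Vitali's convergence theorem directly. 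You already have all the ingredients for that.
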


    \begin{proof}
    This proof can be found, for instant, in \cite{dmr1}.
    \end{proof}

    Then we will use the following result which is a converse of the Lebesgue dominated convergence theorem in the space $H^1_0(\Omega)$.

    \begin{lemma}\label{lem3.1}
    Let $\{u_n\}$ be a sequence of functions in $H^1_0(\Omega)$ strongly convergent. Then there exist a subsequence $\{u_{n_k}\}$ of $\{u_n\}$ and
    $g\in H^1_0(\Omega)$ such that $|u_{n_k}(x)|\leq g(x)$ almost everywhere in $\Omega$.
    \end{lemma}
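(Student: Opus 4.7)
The plan is to mimic the classical converse of the Lebesgue dominated convergence theorem in $L^p$, with the extra care that the dominating function must itself live in $H^1_0(\Omega)$. Denote by $u$ the strong $H^1_0$-limit of $\{u_n\}$. The first step is to extract a fast-convergent subsequence $\{u_{n_k}\}$ with
\[
\|u_{n_k}-u\|\leq 2^{-k},\qquad k\geq 1,
\]
which is possible by a diagonal choice from the strong convergence.

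The next step is to define the candidate dominating function through the series
\[
g := |u| + \sum_{k=1}^{\infty}|u_{n_k}-u|,
\]
and to verify that it belongs to $H^1_0(\Omega)$. The key structural ingredient is the Stampacchia-type result that $w\in H^1_0(\Omega)$ implies $|w|\in H^1_0(\Omega)$ with $\|\,|w|\,\|\leq \|w\|$ (since $\nabla|w|=(\mathrm{sgn}\,w)\nabla w$ a.e.). This places each partial sum $g_N:=|u|+\sum_{k=1}^N|u_{n_k}-u|$ in $H^1_0(\Omega)$, and the triangle inequality together with the $2^{-k}$-estimate gives
\[
\|g_M-g_N\|\leq \sum_{k=N+1}^{M}\|u_{n_k}-u\|\leq \sum_{k=N+1}^{M}2^{-k}\longrightarrow 0
\]
as $N\to\infty$ with $M\geq N$. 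Hence $\{g_N\}$ is Cauchy in $H^1_0(\Omega)$ and, by completeness, converges strongly to some $g\in H^1_0(\Omega)$.

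Finally, I would upgrade this convergence to the pointwise estimate. Since each summand is non-negative, $\{g_N(x)\}$ is monotonically non-decreasing for a.e. $x$; the $L^2$-convergence $g_N\to g$ (a consequence of Poincar\'e's inequality applied to the $H^1_0$-convergence) yields, along a further subsequence, $g_N(x)\to g(x)$ a.e., and combined with monotonicity this forces the whole sequence $g_N(x)\nearrow g(x)$ a.e. In particular $g$ is finite a.e. For every $k$ and a.e.\ $x\in\Omega$ the triangle inequality then gives
\[
|u_{n_k}(x)|\leq |u(x)|+|u_{n_k}(x)-u(x)|\leq g_k(x)\leq g(x),
\]
which is the desired bound. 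The only point requiring real care is the norm control $\|\,|w|\,\|\leq \|w\|$ for $w\in H^1_0(\Omega)$; once this Stampacchia-type fact is invoked, the proof reduces to the standard absolutely-convergent-series argument in a Banach space, so I do not expect any serious obstacle beyond bookkeeping.
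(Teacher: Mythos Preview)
Your argument is correct and is precisely the standard construction: extract a geometrically fast subsequence, sum the moduli of the differences, and use the chain rule for $|w|$ to keep the partial sums in $H^1_0(\Omega)$. The paper does not actually supply a proof of this lemma; it merely refers the reader to \cite{dms}, where the same argument you outline is carried out. So your proposal matches the intended proof, with the minor remark that one in fact has the equality $\big\|\,|w|\,\big\|=\|w\|$ (since $|\nabla|w||=|\nabla w|$ a.e.), not just an inequality.
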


    \begin{proof}
    See the proof in \cite{dms}.
    \end{proof}

    \begin{remark}\label{rem4.3}\rm
    Under the assumptions in Lemma \ref{lem4.1}, by Lemma \ref{lem4.2}, we can get that
    \begin{equation}\label{4.3}
    H_u(x,W_n)\rightarrow H_u(x,W)\ \ \mbox{and}\ \ H_v(x,W_n)\rightarrow H_v(x,W)\ \ \mbox{in}\ \ L^1(\Omega),
    \end{equation}
    where $W$ is the weak limit of Palais-Smale sequence $\{W_n\}$.
    \end{remark}

    \begin{lemma}\label{lem4.4}
    Suppose that $(H_1)-(H_2)$, $(\overline{H_2})$ hold and $H_u,\ H_v$ have subcritical (or critical) growth $(TM)^{sc}$ (or $(TM)^{c}$). If $\{W_n\} \subset H^1_0(\Omega,\mathbb{R}^2)$ is a Palais-Smale sequence for $J_\lambda$ and $W$ is its weak limit then, up to a subsequence,
    \begin{equation}\label{4.5}
    H(x,W_n)\rightarrow H(x,W)\ \ in \ \ L^1(\Omega).
    \end{equation}
    \end{lemma}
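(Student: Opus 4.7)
The plan is to combine the Ambrosetti--Rabinowitz-type control from hypothesis $(H_2)$ on $\{|W_n|\ge S_0\}$ with continuity of $H$ on the bounded set $\{|W|\le S_0\}$, obtaining a domination of $H(x,W_n)$ by a sequence of $L^1$ functions that converges in $L^1$. Then I apply the generalized Lebesgue dominated convergence theorem (the Vitali-type variant: if $f_n\to f$ a.e.\ and $|f_n|\le g_n$ with $g_n\to g$ in $L^1$, then $f_n\to f$ in $L^1$), using the $L^1$ convergence of $H_u(x,W_n)$ and $H_v(x,W_n)$ supplied by Remark \ref{rem4.3}.

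More concretely, the steps are as follows. First, by Lemma \ref{lem4.1}, $\{W_n\}$ is bounded in $H^1_0(\Omega,\mathbb{R}^2)$, so the Rellich--Kondrachov compact embedding $H^1_0(\Omega,\mathbb{R}^2)\hookrightarrow L^p(\Omega,\mathbb{R}^2)$ (valid for every $p\ge 1$ in dimension two) together with a diagonal subsequence argument gives $W_n\to W$ strongly in $L^p$ and, up to a further subsequence, $W_n(x)\to W(x)$ for a.e.\ $x\in\Omega$. Since $H(x,\cdot)$ is continuous, it follows that $H(x,W_n(x))\to H(x,W(x))$ a.e.\ in $\Omega$.

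Next I construct the dominating sequence. By $(H_2)$ applied on the set $A_n:=\{x\in\Omega : |W_n(x)|\ge S_0\}$, and using $|\nabla H(x,W_n)|\le |H_u(x,W_n)|+|H_v(x,W_n)|$, we obtain
\begin{equation*}
|H(x,W_n(x))|\le M_0\bigl(|H_u(x,W_n(x))|+|H_v(x,W_n(x))|\bigr),\qquad x\in A_n.
\end{equation*}
On the complement $\Omega\setminus A_n$ the continuity of $H$ on the compact set $\overline{\Omega}\times\overline{B_{S_0}(0)}$ gives $|H(x,W_n(x))|\le C_0$ for some constant $C_0>0$ independent of $n$. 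Setting
\begin{equation*}
g_n(x):=M_0\bigl(|H_u(x,W_n)|+|H_v(x,W_n)|\bigr)+C_0,\qquad g(x):=M_0\bigl(|H_u(x,W)|+|H_v(x,W)|\bigr)+C_0,
\end{equation*}
we have $|H(x,W_n)|\le g_n$ a.e.\ By Remark \ref{rem4.3}, $H_u(x,W_n)\to H_u(x,W)$ and $H_v(x,W_n)\to H_v(x,W)$ in $L^1(\Omega)$; since $L^1$ convergence passes to absolute values (reverse triangle inequality), $g_n\to g$ in $L^1(\Omega)$ and in particular $g\in L^1(\Omega)$.

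Finally I invoke the generalized Lebesgue dominated convergence theorem applied to $f_n=H(x,W_n)$, $f=H(x,W)$, with dominating sequence $g_n\to g$ in $L^1$ and $f_n\to f$ a.e., to conclude $H(x,W_n)\to H(x,W)$ in $L^1(\Omega)$. The essential observation — and the only delicate point — is the uniform $L^1$ domination encoded by $(H_2)$: without the structural estimate $H\le M_0|\nabla H|$ at infinity we could not convert the $L^1$ control of $H_u,H_v$ furnished by Remark \ref{rem4.3} (which in turn rests on the Palais--Smale bounds in Lemma \ref{lem4.1}) into a corresponding $L^1$ control of $H$ itself. Once this domination is in place the rest is a standard convergence argument, and the original sequence converges (not just a subsequence) because the limit is uniquely determined.
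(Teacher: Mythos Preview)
Your proof is correct and follows essentially the same strategy as the paper's: both use hypothesis $(H_2)$ to dominate $H(x,W_n)$ by $C_0+M_0|\nabla H(x,W_n)|$, then convert the $L^1$-convergence of $H_u(x,W_n)$ and $H_v(x,W_n)$ from Remark~\ref{rem4.3} into $L^1$-convergence of $H(x,W_n)$ via a dominated-convergence argument. The only cosmetic difference is that the paper invokes Lemma~\ref{lem3.1} (the ``reverse'' dominated convergence theorem) to extract a \emph{fixed} $L^1$ dominator $F_1+F_2$ from the convergent sequences $H_u(x,W_n),H_v(x,W_n)$, whereas you keep the moving dominators $g_n$ and appeal to the generalized DCT with $g_n\to g$ in $L^1$; these are interchangeable devices.
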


    \begin{proof}
    From Remark \ref{rem4.3}, we have
    \begin{align*}
    H_u(x,W_n)\rightarrow H_u(x,W)\ \ \mbox{and}\ \ H_v(x,W_n)\rightarrow H_v(x,W)\ \ \mbox{in}\ \ L^1(\Omega).
    \end{align*}
    Thus, by Lemma \ref{lem3.1}, there exist $F_1,\ F_2\in L^1(\Omega)$ such that $|H_u(x,W_n)|\leq F_1(x),\ |H_v(x,W_n)|\leq F_2(x)$ almost everywhere in $\Omega$. From $(H_2)$ we can conclude that
    \begin{align*}
    \begin{split}
    |H(x,W_n)|&\leq \sup_{(x,|W_n|)\in\Omega \times [0,S_0]} |H(x,W_n(x))|+M_0|\nabla H(x,W_n)| \\
    &\leq \sup_{(x,|W_n|)\in\Omega \times [0,S_0]} |H(x,W_n(x))|+M_0(F_1+F_2)  \ \ \mbox{a.e.} \ \ \mbox{in}\ \ \Omega.
    \end{split}
    \end{align*}
    Thus, by the generalized Lebesgue dominated convergence theorem
    \begin{equation*}
    H(x,W_n)\rightarrow H(x,W)\ \ \mbox{in}\ \ L^1(\Omega).
    \end{equation*}
    \end{proof}

\section{{\bfseries Existence results}}\label{main}
    According to Lemmas \ref{lemgc4} and \ref{lemgc2}, let
    \begin{equation}\label{5.1}
    c_{M,\lambda}=\inf_{\gamma\in\Upsilon}\max_{t\in [0,1]}J_\lambda(\gamma(t))>0,
    \end{equation}
    be the minimax level for $J_\lambda$, where $\Upsilon=\{\gamma\in C\big([0,1],H^1_0(\Omega,\mathbb{R}^2)\big):\gamma(0)=0,\gamma(1)=E_0\}$, and $E_0$ is given in Lemma \ref{lemgc2}. Therefore, by using the Mountain-pass theorem, there exists a sequence $\{W_n\} \subset H^1_0(\Omega,\mathbb{R}^2)$ satisfying
    \begin{equation}\label{5.2}
    J_\lambda(W_n)\rightarrow c_{M,\lambda}\ \ \mbox{and}\ \ J_\lambda'(W_n)\rightarrow 0.
    \end{equation}

\subsection{\bfseries Subcritical case: Proof of Theorem \ref{thmsesl}}

    \begin{lemma}\label{lem5.1}
    Suppose that $H_u,\ H_v$ have subcritical growth $(TM)^{sc}$, then functional $J_\lambda$ satisfies the Palais-Smale condition at any finite level.
    \end{lemma}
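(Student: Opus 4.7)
The plan is to take a Palais--Smale sequence $\{W_n\}\subset H^1_0(\Omega,\mathbb{R}^2)$ for $J_\lambda$ at a finite level $c$ and extract a strongly convergent subsequence. By Lemma \ref{lem4.1}, $\{W_n\}$ is bounded, so up to a subsequence there exist $W=(u,v)\in H^1_0(\Omega,\mathbb{R}^2)$ and $t_1,t_2\geq 0$ such that $W_n\rightharpoonup W$ weakly in $H^1_0(\Omega,\mathbb{R}^2)$, $W_n\to W$ strongly in every $L^p(\Omega,\mathbb{R}^2)$ and a.e.\ in $\Omega$, and $\|u_n\|\to t_1$, $\|v_n\|\to t_2$; weak lower semicontinuity forces $t_1\geq \|u\|$ and $t_2\geq \|v\|$.

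The first task is to pass to the limit in $\langle J_\lambda'(W_n),\Phi\rangle=o(1)$. The crucial point is that $\{\nabla H(x,W_n)\}$ is uniformly bounded in $L^q(\Omega,\mathbb{R}^2)$ for some $q>1$: by $(TM)^{sc}$, for each $\beta>0$ one has $|\nabla H(x,W_n)|\leq C_\beta\exp(\beta|W_n|^2)$, and choosing $\beta$ so small that $q\beta\sup_n\|W_n\|^2\leq 4\pi$ yields the bound via Lemma \ref{lemtm4}. Combined with the pointwise convergence $\nabla H(x,W_n)\to\nabla H(x,W)$, this gives the weak convergence $\nabla H(x,W_n)\rightharpoonup\nabla H(x,W)$ in $L^q(\Omega,\mathbb{R}^2)$. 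Passing to the limit therefore produces the \emph{modified} equation
\[
\bigl(a_1+b_1 t_1^{2(\theta_1-1)}\bigr)\int_\Omega\nabla u\cdot\nabla\varphi\,dx+\bigl(a_2+b_2 t_2^{2(\theta_2-1)}\bigr)\int_\Omega\nabla v\cdot\nabla\psi\,dx=\lambda\int_\Omega\Phi\cdot\nabla H(x,W)\,dx
\]
for every $\Phi=(\varphi,\psi)\in H^1_0(\Omega,\mathbb{R}^2)$.

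The last step upgrades weak convergence to strong convergence via an energy identity. Setting $\Phi=W$ above yields
\[
\bigl(a_1+b_1 t_1^{2(\theta_1-1)}\bigr)\|u\|^2+\bigl(a_2+b_2 t_2^{2(\theta_2-1)}\bigr)\|v\|^2=\lambda\int_\Omega W\cdot\nabla H(x,W)\,dx.
\]
On the other hand, $\langle J_\lambda'(W_n),W_n\rangle\to 0$ together with the convergence $\int_\Omega W_n\cdot\nabla H(x,W_n)\,dx\to\int_\Omega W\cdot\nabla H(x,W)\,dx$ (which follows from the strong $L^{q'}$-convergence of $W_n$ paired with the weak $L^q$-convergence of $\nabla H(x,W_n)$, by splitting $W_n\cdot\nabla H(x,W_n)-W\cdot\nabla H(x,W)=(W_n-W)\cdot\nabla H(x,W_n)+W\cdot(\nabla H(x,W_n)-\nabla H(x,W))$) gives
\[
\bigl(a_1+b_1 t_1^{2(\theta_1-1)}\bigr)t_1^2+\bigl(a_2+b_2 t_2^{2(\theta_2-1)}\bigr)t_2^2=\lambda\int_\Omega W\cdot\nabla H(x,W)\,dx.
\]
Subtracting and using $a_i,b_i>0$, $t_1\geq\|u\|$, $t_2\geq\|v\|$, each nonnegative summand in the resulting identity must vanish, forcing $t_1=\|u\|$ and $t_2=\|v\|$. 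Combined with the weak convergence, this gives $W_n\to W$ strongly in $H^1_0(\Omega,\mathbb{R}^2)$.

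The main obstacle is the nonlocal Kirchhoff structure: a priori the coefficients $\|u_n\|^{2(\theta_1-1)}$ and $\|v_n\|^{2(\theta_2-1)}$ converge only to $t_1^{2(\theta_1-1)}$ and $t_2^{2(\theta_2-1)}$, which may strictly exceed $\|u\|^{2(\theta_1-1)}$ and $\|v\|^{2(\theta_2-1)}$, so the weak limit $W$ solves only the modified equation. The identification $t_i=\|\cdot\|$ must be extracted \emph{a posteriori} from comparison of the two energy identities. The subcritical assumption makes the exponential term painless because $\beta$ can be taken arbitrarily small; this is exactly why compactness holds at every finite level in this regime, in contrast with the critical case.
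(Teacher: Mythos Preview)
Your argument is correct, but it takes a somewhat more elaborate route than the paper's own proof. The paper tests $J_\lambda'(W_n)$ directly against $W_n-W$: from $(TM)^{sc}$ one has $|\nabla H(x,W_n)|\le C\exp(\beta|W_n|^2)$ for any $\beta>0$, so choosing $2\beta\sup_n\|W_n\|^2\le 4\pi$ and using Lemma~\ref{lemtm4} together with $\|W_n-W\|_2\to 0$ gives $\int_\Omega(W_n-W)\cdot\nabla H(x,W_n)\,dx\to 0$. Hence
\[
\bigl(a_1+b_1\|u_n\|^{2(\theta_1-1)}\bigr)\langle u_n,u_n-u\rangle+\bigl(a_2+b_2\|v_n\|^{2(\theta_2-1)}\bigr)\langle v_n,v_n-v\rangle\to 0,
\]
and since both summands have nonnegative $\liminf$ (by weak lower semicontinuity) with coefficients bounded below by $a_i>0$, each must vanish, forcing $\|u_n\|\to\|u\|$ and $\|v_n\|\to\|v\|$.

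Your approach instead splits this single computation into two pieces: you first pass to the limit in $\langle J_\lambda'(W_n),\Phi\rangle$ to obtain the auxiliary equation with frozen coefficients $t_1,t_2$, and then compare the energy identity for $W$ (testing the auxiliary equation with $\Phi=W$) against the limit of $\langle J_\lambda'(W_n),W_n\rangle$. Subtracting these two identities is algebraically the same as the paper's single test against $W_n-W$. What you gain is that the auxiliary equation is made explicit---this is precisely the device the paper reserves for the critical case (cf.\ Claims~2--4 in Lemma~\ref{lemcpsc})---so your proof would transfer to that setting with fewer changes. What the paper's version gains is brevity: there is no need to establish the weak $L^q$-convergence $\nabla H(x,W_n)\rightharpoonup\nabla H(x,W)$ or to write down the modified equation at all.
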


    \begin{proof}
    Let $\{W_n\} \subset H^1_0(\Omega,\mathbb{R}^2)$ be a sequence such that $J_\lambda(W_n)\rightarrow c$ and $J_\lambda'(W_n)\rightarrow 0$ with $|c|<\infty$. From Lemma \ref{lem4.1}, we have $\{W_n\}$ is bounded and there exists $C_0>0$ such that $\|W_n\|\leq C_0$, so we can get a subsequence still denoted by $\{W_n\}$, for some $W=(u,v)\in H^1_0(\Omega,\mathbb{R}^2)$
    \begin{align*}
    \begin{split}
    &W_n\rightharpoonup W\ \ \mbox{in} \ \ H^1_0(\Omega,\mathbb{R}^2), \\
    &W_n \rightarrow W\ \ \mbox{in}\ \ L^s(\Omega,\mathbb{R}^2),\ \ \mbox{for all}\ \ s\geq 1.
    \end{split}
    \end{align*}
    Since
    \begin{equation}\label{5.3}
    \begin{split}
    \langle J_\lambda'(W_n),W_n-W\rangle=&\big(a_1+b_1\|u_n\|^{2(\theta_1-1)}\big)\langle u_n,u_n-u\rangle+\big(a_2+b_2\|v_n\|^{2(\theta_2-1)}\big)\langle v_n,v_n-v\rangle \\
    &-\lambda\int_{\Omega}(W_n-W)\cdot \nabla H(x,W_n)dx.
    \end{split}
    \end{equation}
    From $J_\lambda'(W_n)\rightarrow 0$ in $\big(H^1_0(\Omega,\mathbb{R}^2)\big)_*$, we have $\langle J_\lambda'(W_n),W_n-W\rangle \rightarrow 0$.
    Meanwhile, by subscritical growth condition $(TM)^{sc}$ and the H\"{o}lder's inequality, it follows that
    \begin{align*}
    \begin{split}
    \int_{\Omega}|W_n-W|\cdot|\nabla H(x,W_n)|dx &\leq C\int_{\Omega}|W_n-W|\exp(\beta |W_n|^2)dx \\
    &\leq C\|W_n-W\|_2\left(\int_{\Omega}\exp(2\beta |W_n|^2)dx\right)^\frac {1}{2},
    \end{split}
    \end{align*}
    for all $\beta>0$, $r>1$, and $s= \frac {r}{r-1}$. Choosing $\beta=\frac{2\pi}{C^2_0}$, we have $2\beta \|W_n\|^2\leq 4\pi$, then by Lemma \ref{lemtm4}  and the embedding $H^1_0(\Omega,\mathbb{R}^2)\hookrightarrow L^s(\Omega,\mathbb{R}^2)$ is compact for $s\geq 2$, the second term in the last inequality converges to zero.
    Thus, in (\ref{5.3}), it must be that
    \begin{align*}
    \big(a_1+b_1\|u_n\|^{2(\theta_1-1)}\big)\langle u_n,u_n-u\rangle+\big(a_2+b_2\|v_n\|^{2(\theta_2-1)}\big)\langle v_n,v_n-v\rangle \rightarrow 0
    \end{align*}
    as $n\rightarrow\infty$. Since $a_1,\ a_2>0$, $b_1,\ b_2> 0$, and $W_n\rightharpoonup W$ in $H^1_0(\Omega,\mathbb{R}^2)$, it must be that
    \begin{align*}
    \langle u_n,u_n-u\rangle\rightarrow 0\ \ \mbox{and}\ \ \langle v_n,v_n-v\rangle \rightarrow 0,
    \end{align*}
    by the semicontinuity of norm, which means $\|u_n\|^2\rightarrow \|u\|^2$ and $\|v_n\|^2\rightarrow \|v\|^2$. Then by Br\'{e}zis-Lieb Lemma \cite{bl2}, we have $u_n\rightarrow u$ and $v_n\rightarrow v$ in $H^1_0(\Omega)$, i.e. $W_n\rightarrow W$ in $H^1_0(\Omega,\mathbb{R}^2)$. This proof is complete.
    \end{proof}

\noindent{\bfseries Proof of Theorem \ref{thmsesl}:} By (\ref{5.2}) and Lemma \ref{lem5.1}, using Minimax principle, there exists a critical point $W_0$ for $J_\lambda$ at level $c_{M,\lambda}$.
    We affirm that $W_0\neq (0,0)$. In fact, suppose by contradiction that $ W_0\equiv (0,0)$. Because $J_\lambda$ is continuous, $J_\lambda(W_n)\rightarrow c_{M,\lambda}$ and $W_n\rightarrow W_0$ in $H^1_0(\Omega,\mathbb{R}^2)$, we can know that $0<c_{M,\lambda}=\lim_{n\rightarrow \infty}J_\lambda(W_n)=J_\lambda(W_0)=J_\lambda((0,0))=0$, what is absurd. Thus, the proof of Theorem \ref{thmsesl} is complete.
    \qed

\subsection{\bfseries Critical case: Proof of Theorem \ref{thmcerd}}

    \begin{lemma}\label{ms1}
    Suppose that  $H_u,\ H_v$ have critical growth $(TM)^{c}$, and $(H_1)$, $(H_2)$ are satisfied. If $\{W_n\}\subset H^1_0(\Omega,\mathbb{R}^2)$ is a bounded Palais-Smale sequence for $J_\lambda$ at any finite level with
    \begin{align}\label{msVx}
    \limsup_{n\rightarrow\infty}\|W_n\|^2<\frac{4\pi}{\beta_0},
    \end{align}
    then $\{W_n\}$ possesses a strongly subsequence, i.e. the functional $J_\lambda$ satisfies the Palais-Smale condition.
    \end{lemma}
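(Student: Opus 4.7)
The plan is to follow the same strategy as in the subcritical case (Lemma \ref{lem5.1}), with the only genuine new ingredient being the way we force the exponential integrals to stay bounded. Since $\{W_n\}$ is bounded, by reflexivity and the Rellich--Kondrachov compactness theorem we may pass to a subsequence so that $W_n\rightharpoonup W$ in $H^1_0(\Omega,\mathbb{R}^2)$, $W_n\to W$ in $L^s(\Omega,\mathbb{R}^2)$ for every $s\geq 1$, and $W_n\to W$ a.e. in $\Omega$. The goal is to show that the Kirchhoff-weighted testing identity
\begin{equation*}
\bigl(a_1+b_1\|u_n\|^{2(\theta_1-1)}\bigr)\langle u_n,u_n-u\rangle+\bigl(a_2+b_2\|v_n\|^{2(\theta_2-1)}\bigr)\langle v_n,v_n-v\rangle=\langle J_\lambda'(W_n),W_n-W\rangle+\lambda\!\int_\Omega (W_n-W)\cdot\nabla H(x,W_n)\,dx
\end{equation*}
has right-hand side going to zero, so that, after using Br\'{e}zis--Lieb and the positivity of the Kirchhoff coefficients, one obtains strong convergence in $H^1_0(\Omega,\mathbb{R}^2)$.

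The first term on the right vanishes because $J_\lambda'(W_n)\to 0$ in the dual and $\{W_n-W\}$ is bounded. For the second term I would exploit the critical-growth bound $|\nabla H(x,W)|\leq C\exp(\beta|W|^2)$, valid for any $\beta>\beta_0$ (see \eqref{3.1}). The hypothesis $\limsup_n\|W_n\|^2<4\pi/\beta_0$ allows us to fix $\beta>\beta_0$ and $q>1$ both sufficiently close to $\beta_0$ and $1$ respectively so that
\[
q\beta\,\|W_n\|^2\leq 4\pi\quad\text{for all large }n.
\]
By Lemma \ref{lemtm4} the family $\{\exp(q\beta|W_n|^2)\}$ is then uniformly bounded in $L^1(\Omega)$. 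Applying H\"{o}lder's inequality with exponents $q,q'=q/(q-1)$ gives
\[
\int_\Omega |W_n-W|\,|\nabla H(x,W_n)|\,dx\leq C\,\|W_n-W\|_{q'}\Bigl(\int_\Omega \exp\bigl(q\beta|W_n|^2\bigr)dx\Bigr)^{1/q}\longrightarrow 0,
\]
since $W_n\to W$ strongly in every $L^{q'}(\Omega,\mathbb{R}^2)$.

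Combining these two facts, the weighted testing identity yields
\[
\bigl(a_1+b_1\|u_n\|^{2(\theta_1-1)}\bigr)\langle u_n,u_n-u\rangle+\bigl(a_2+b_2\|v_n\|^{2(\theta_2-1)}\bigr)\langle v_n,v_n-v\rangle\longrightarrow 0.
\]
Because $u_n\rightharpoonup u$ and $v_n\rightharpoonup v$, weak lower semicontinuity of the norm gives $\liminf_n\langle u_n,u_n-u\rangle\geq 0$ and $\liminf_n\langle v_n,v_n-v\rangle\geq 0$, while the Kirchhoff coefficients are bounded below by $a_{\min}>0$. Hence each nonnegative summand must tend to zero separately, which forces $\|u_n\|^2\to\|u\|^2$ and $\|v_n\|^2\to\|v\|^2$. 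Combined with weak convergence in the Hilbert space $H^1_0(\Omega,\mathbb{R}^2)$, this upgrades to $W_n\to W$ strongly, as required.

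The only real obstacle is the control of $\int_\Omega |W_n-W||\nabla H(x,W_n)|\,dx$: the critical growth of $\nabla H$ does not by itself allow us to push $\beta$ down to $\beta_0$, and this is precisely what the assumption $\limsup\|W_n\|^2<4\pi/\beta_0$ is designed to fix, by leaving a margin in which we can still multiply by $q>1$ and stay inside the sharp Moser threshold of Lemma \ref{lemtm4}. Once this uniform exponential bound is secured, everything else is a routine adaptation of the proof of Lemma \ref{lem5.1}.
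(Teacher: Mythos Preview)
Your proposal is correct and follows essentially the same route as the paper's own proof. Both arguments pass to a weakly convergent subsequence, test $J_\lambda'(W_n)$ against $W_n-W$, exploit the hypothesis \eqref{msVx} to choose $\beta>\beta_0$ and $q>1$ with $q\beta\|W_n\|^2\le 4\pi$ so that Lemma~\ref{lemtm4} controls the exponential integral, kill the $\nabla H$-term via H\"older and compact embedding, and then deduce $\|u_n\|\to\|u\|$, $\|v_n\|\to\|v\|$ from the Kirchhoff-weighted identity together with weak lower semicontinuity of the norm and the positivity of the coefficients.
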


    \begin{proof}
    Let $\{W_n\}\subset H^1_0(\Omega,\mathbb{R}^2)$ is a bounded Palais-Smale sequence for $J_\lambda$ satisfying (\ref{msVx}), then up to a subsequence, there is $W=(u,v)\in H^1_0(\Omega,\mathbb{R}^2)$ such that
    \begin{align*}
    \begin{split}
    &W_n\rightharpoonup W\ \ \mbox{in}\ \ H^1_0(\Omega,\mathbb{R}^2).
    \end{split}
    \end{align*}
    By $J'_\lambda(W_n)\rightarrow 0$ in $\big(H^1_0(\Omega,\mathbb{R}^2)\big)^*$, we can get
    \begin{align*}
    \begin{split}
    o_n(1)=&
    \big(a_1+b_1\|u_n\|^{2(\theta_1-1)}\big)\langle u_n,u_n-u \rangle
    +\big(a_2+b_2\|v_n\|^{2(\theta_2-1)}\big)\langle v_n,v_n-v \rangle  \\
    & -\lambda\int_{\Omega}(W_n-W)\cdot\nabla H(x,W_n)dx,
    \end{split}
    \end{align*}
    that is,
    \begin{align}\label{ws3}
    \begin{split}
    &\big(a_1+b_1\|u_n\|^{2(\theta_1-1)}\big)(\|u_n\|^2-\|u\|^2)
    +\big(a_2+b_2\|v_n\|^{2(\theta_2-1)}\big)(\|v_n\|^2-\|v\|^2)  \\
    =&\lambda\int_{\Omega}(W_n-W)\cdot\nabla H(x,W_n)dx+o_n(1).
    \end{split}
    \end{align}
    From (\ref{msVx}), there exists $\zeta>0$ such that $\beta_0\|W_n\|^2<\zeta<4\pi$ for $n$ sufficiently large and also, there exist $\beta>\beta_0$ close to $\beta_0$ and $q>1$ close to $1$ such that $q\beta\|W_n\|^2\leq\zeta<4\pi$  for $n$ sufficiently large. Then by conditions $(TM)^{c}$ and $(H_2)$, we have
    \begin{align*}
    \begin{split}
    \left|\int_{\Omega}(W_n-W)\cdot\nabla H(x,W_n)dx\right|\leq C_1\int_{\Omega}|W_n-W|\exp(\beta|W_n|^2)dx,
    \end{split}
    \end{align*}
    and by the H\"{o}lder's inequality, Lemma \ref{lemtm4} and Sobolev embedding theorem, we can get that
    \begin{align*}
    \begin{split}
    \left|\int_{\Omega}(W_n-W)\cdot\nabla H(x,W_n)dx\right|
    \leq & C_1\|W_n-W\|_{\frac{q}{q-1}} \left(\int_{\Omega}|W_n-W|\exp(\beta|W_n|^2)dx\right)^{\frac{1}{q}} \\
    \leq & C_2\|W_n-W\|_{\frac{q}{q-1}}\to 0,
    \end{split}
    \end{align*}
    as $n\to \infty$. Then combining with (\ref{ws3}), we have $\|u_n\|\to \|u\|$ and $\|v_n\|\to \|v\|$. Then by Br\'{e}zis-Lieb Lemma \cite{bl2}, we have $u_n\rightarrow u,\ v_n\rightarrow v$ in $H^1_0(\Omega)$, i.e. $W_n\rightarrow W$ in $H^1_0(\Omega,\mathbb{R}^2)$  and the result follows.
    \end{proof}

    \begin{lemma}\label{lemcpsc}
    Suppose that  $H_u,\ H_v$ have critical growth $(TM)^{c}$, and $(H_1)$, $(H_2)$, $(H_3)$, $(SL)$ are satisfied. Moreover, if
    \begin{align}\label{mpllp}
    \begin{split}
    c_{M,\lambda}<\min\bigg\{\frac{1}{2}\Big(a_1\frac{2\pi}{\beta_0}+\frac{b_1}{\theta_1}\big(\frac{2\pi}{\beta_0}\big)^{\theta_1}\Big),
    \frac{1}{2}\Big(a_2\frac{2\pi}{\beta_0}+\frac{b_2}{\theta_2}\big(\frac{2\pi}{\beta_0}\big)^{\theta_2}\Big)\bigg\}.
    \end{split}
    \end{align}
    for some $\lambda>0$, then the functional $J_\lambda$ satisfies the Palais-Smale condition at level $c_{M,\lambda}$.
    \end{lemma}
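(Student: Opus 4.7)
The plan is to verify the hypothesis of Lemma \ref{ms1}, after which the strong convergence follows automatically. Since $(H_3)$ is precisely $(\overline{H_2})$ with $\mu=2\theta_{max}$, Lemma \ref{lem4.1} ensures that every Palais--Smale sequence $\{W_n\}\subset H_0^1(\Omega,\mathbb{R}^2)$ at level $c_{M,\lambda}$ is bounded; after extracting a subsequence we have $W_n\rightharpoonup W=(u,v)$ in $H_0^1(\Omega,\mathbb{R}^2)$, $W_n\to W$ in every $L^s$ and pointwise a.e., together with $\|u_n\|\to A_1\geq\|u\|$ and $\|v_n\|\to A_2\geq\|v\|$. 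Remark \ref{rem4.3} and Lemma \ref{lem4.4} give $\nabla H(x,W_n)\to\nabla H(x,W)$ and $H(x,W_n)\to H(x,W)$ in $L^1(\Omega)$.

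Using $\|u_n\|^{2(\theta_i-1)}\to A_i^{2(\theta_i-1)}$ together with this $L^1$ convergence, passing to the limit in $\langle J_\lambda'(W_n),\Phi\rangle\to 0$ yields the frozen-coefficient identity
\begin{equation*}
    \tilde a_1\langle u,\varphi\rangle+\tilde a_2\langle v,\psi\rangle=\lambda\int_\Omega\Phi\cdot\nabla H(x,W)\,dx,\qquad \tilde a_i:=a_i+b_iA_i^{2(\theta_i-1)},
\end{equation*}
valid for every $\Phi=(\varphi,\psi)\in H_0^1(\Omega,\mathbb{R}^2)$. Choosing $\Phi=W$ and invoking $(H_3)$ gives $\lambda\int_\Omega H(x,W)\,dx\leq(\tilde a_1\|u\|^2+\tilde a_2\|v\|^2)/(2\theta_{max})$. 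Combined with the PS-level identity $c_{M,\lambda}=\sum_{i=1,2}\bigl(\tfrac{a_iA_i^2}{2}+\tfrac{b_iA_i^{2\theta_i}}{2\theta_i}\bigr)-\lambda\int_\Omega H(x,W)\,dx$, and setting $\alpha_i:=A_i^2-\|w_i\|^2\geq 0$ with $w_1=u$, $w_2=v$, the algebraic splittings
\begin{equation*}
    \theta_{max}A_i^2-\|w_i\|^2=\theta_{max}\alpha_i+(\theta_{max}-1)\|w_i\|^2,\quad \theta_{max}A_i^2-\theta_i\|w_i\|^2=\theta_{max}\alpha_i+(\theta_{max}-\theta_i)\|w_i\|^2,
\end{equation*}
together with the convexity bound $A_i^{2(\theta_i-1)}\geq\alpha_i^{\theta_i-1}$, produce four manifestly nonnegative summands whose $\alpha_i$-pieces collect into
\begin{equation*}
    c_{M,\lambda}\;\geq\;\sum_{i=1,2}\left(\frac{a_i\alpha_i}{2}+\frac{b_i\alpha_i^{\theta_i}}{2\theta_i}\right).
\end{equation*}
If $\alpha_1\geq 2\pi/\beta_0$, the $i=1$ summand alone exceeds $\tfrac{1}{2}\bigl(a_1\tfrac{2\pi}{\beta_0}+\tfrac{b_1}{\theta_1}(\tfrac{2\pi}{\beta_0})^{\theta_1}\bigr)$, contradicting \eqref{mpllp}; symmetrically $\alpha_2<2\pi/\beta_0$, so $\alpha_1+\alpha_2<4\pi/\beta_0$.

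If $W=0$ this reads $\|W_n\|^2\to\alpha_1+\alpha_2<4\pi/\beta_0$, and Lemma \ref{ms1} concludes immediately. If $W\neq 0$, let $T^2:=\lim\|W_n\|^2\geq\|W\|^2$; then $T^2-\|W\|^2<4\pi/\beta_0$, and the rescaled sequence $\tilde W_n:=W_n/\|W_n\|$ satisfies $\|\tilde W_n\|=1$ and $\tilde W_n\rightharpoonup W/T\neq 0$. Lions' Lemma \ref{lemtm5} then gives $\sup_n\int_\Omega\exp(q\beta|W_n|^2)\,dx<\infty$ for some $q>1$ and some $\beta>\beta_0$ chosen so that $q\beta<4\pi/(T^2-\|W\|^2)$. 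The critical growth bound $|\nabla H(x,W_n)|\leq C\exp(\beta|W_n|^2)$, H\"older with exponents $q,q/(q-1)$, and the compact embedding $H_0^1(\Omega,\mathbb{R}^2)\hookrightarrow L^{q/(q-1)}(\Omega,\mathbb{R}^2)$ together yield $\int_\Omega(W_n-W)\cdot\nabla H(x,W_n)\,dx\to 0$. Testing $\langle J_\lambda'(W_n),W_n-W\rangle\to 0$ and using $\langle u_n,u_n-u\rangle\to\alpha_1$ together with the analogous limit for $v$ forces $\tilde a_1\alpha_1+\tilde a_2\alpha_2=0$, so $\alpha_1=\alpha_2=0$ and $W_n\to W$ strongly.

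The main obstacle is the algebraic rearrangement that produces the clean lower bound $\sum_i\bigl(\tfrac{a_i\alpha_i}{2}+\tfrac{b_i\alpha_i^{\theta_i}}{2\theta_i}\bigr)$: a naive use of $(H_3)$ together with $\|w_i\|\leq A_i$ only controls $c_{M,\lambda}$ by a factor $(\theta_{max}-1)/\theta_{max}$ of the desired threshold, so the splittings above and the convexity bound $A_i^{2(\theta_i-1)}\geq\alpha_i^{\theta_i-1}$ are essential. The case $W\neq 0$ is also delicate because $\|W_n\|^2$ itself may exceed $4\pi/\beta_0$, making Lemma \ref{ms1} inapplicable directly and forcing the detour through Lions' Lemma \ref{lemtm5} applied to the deficit $W_n-W$.
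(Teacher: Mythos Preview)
Your proof is correct and follows the same overall strategy as the paper (bound the norm deficits $\alpha_i=A_i^2-\|w_i\|^2$ via $(H_3)$ and the level constraint \eqref{mpllp}, then use Lions' concentration--compactness Lemma~\ref{lemtm5} when $W\neq 0$), but your execution is tighter in two respects worth noting.

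First, the paper organizes the argument into four separate claims: it proves $W_0\neq 0$ up front using $c_{M,\lambda}>0$ (Claim~1), derives the frozen--coefficient identity (Claim~2), shows $\lim\|W_n\|^2-\|W_0\|^2<4\pi/\beta_0$ by contradiction via a somewhat intricate ``assertion'' that subtracts two limiting identities (Claim~3), and then proves $J_\lambda(W_0)=c_{M,\lambda}$ as an intermediate step before concluding strong convergence (Claim~4). You instead extract the clean pointwise lower bound
\[
c_{M,\lambda}\;\geq\;\sum_{i=1,2}\Bigl(\tfrac{a_i\alpha_i}{2}+\tfrac{b_i\alpha_i^{\theta_i}}{2\theta_i}\Bigr)
\]
directly from the frozen identity and the splittings $\theta_{max}A_i^2-\|w_i\|^2=\theta_{max}\alpha_i+(\theta_{max}-1)\|w_i\|^2$, $\theta_{max}A_i^2-\theta_i\|w_i\|^2=\theta_{max}\alpha_i+(\theta_{max}-\theta_i)\|w_i\|^2$ together with $A_i^{2(\theta_i-1)}\geq\alpha_i^{\theta_i-1}$. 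This immediately forces $\alpha_i<2\pi/\beta_0$ for each $i$ separately, which is exactly the content of the paper's Claim~3 but obtained without the contradiction setup or the auxiliary assertion. Second, you do not need to isolate $W=0$ as a forbidden case: you simply feed it into Lemma~\ref{ms1}, whereas the paper uses $c_{M,\lambda}>0$ to rule it out before invoking Lions' lemma. Your route avoids the detour through $J_\lambda(W_0)=c_{M,\lambda}$ entirely and passes straight from the Lions estimate to $\tilde a_1\alpha_1+\tilde a_2\alpha_2=0$.

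One cosmetic point: in the case $W\neq 0$ you should dispose of $T^2=\|W\|^2$ (equivalently $\alpha_1=\alpha_2=0$) before dividing by $T^2-\|W\|^2$; this is trivial since it already gives strong convergence.
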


    \begin{proof}
    From (\ref{5.2}) and Lemma \ref{lem4.1}, there exists a bounded Palais-Smale consequence $\{W_n\}$ for $J_\lambda$ at level $c_{M,\lambda}$. Then up to a subsequence, still denoted by $\{W_n\}$, for some $W_0\in H^1_0(\Omega,\mathbb{R}^2)$, one has
    \begin{align}\label{5.14}
    \begin{split}
    W_n & \rightharpoonup W_0\ \ \ \ \ \ \mbox{in}\ \ H^1_0(\Omega,\mathbb{R}^2), \\
    W_n & \rightarrow W_0\ \ \ \ \ \ \mbox{in}\ \ L^s(\Omega,\mathbb{R}^2),\ \ \mbox{for all}\ \ s\geq 1  \\
    W_n(x) & \rightarrow W_0(x)\ \ \mbox{in}\ \ \mathbb{R}^2, \ \ \mbox{for a.e.}\ \ x\in \Omega.
    \end{split}
    \end{align}
    Next, we will make some claims as follows:

\noindent{\bfseries Claim 1.} \ \ \ $W_0\neq (0,0)$.
    \begin{proof}
    Supposing by contradiction that $W_0\equiv (0,0)$. Hence, by Lemma \ref{lem4.4} and $(SL)$, $\int_{\Omega}H(x,W_n)\rightarrow 0$ and so
    \begin{align*}
    \begin{split}
    &\frac {1}{2}\Big(a_1\|u_n\|^2+\frac{b_1}{\theta_1}\|u_n\|^{2\theta_1}\Big)+\frac {1}{2}\Big(a_2\|v_n\|^2+\frac{b_2}{\theta_2}\|v_n\|^{2\theta_2}\Big)\rightarrow c_{M,\lambda} \\
    &<\min\bigg\{\frac{1}{2}\Big(a_1\frac{2\pi}{\beta_0}+\frac{b_1}{\theta_1}\big(\frac{2\pi}{\beta_0}\big)^{\theta_1}\Big),
    \frac{1}{2}\Big(a_2\frac{2\pi}{\beta_0}+\frac{b_2}{\theta_2}\big(\frac{2\pi}{\beta_0}\big)^{\theta_2}\Big)\bigg\}.
    \end{split}
    \end{align*}
     We can get that for $n$ large enough $\|u_n\|^2,\ \|v_n\|^2<\frac{2\pi}{\beta_0}$, so $\|W_n\|^2=\|u_n\|^2+\|v_n\|^2<\frac{4\pi}{\beta_0}$. 
Then by Lemma \ref{ms1}, we have $\|W_n\|^2\rightarrow 0$. Therefore $J_\lambda(W_n)\rightarrow 0$, what is absurd for $c_{M,\lambda}$ and we must have $W_0\neq (0,0)$.
    \end{proof}

\noindent{\bfseries Claim 2.} \ \ \ Let $\lim\inf_{n\rightarrow \infty}\|u_n\|^2=\sigma^2,\ \lim\inf_{n\rightarrow \infty}\|v_n\|^2=\tau^2$. Then $W_0=(u_0,v_0)$ is a weak solution of
    \begin{eqnarray}\label{wsy}
    \left\{ \arraycolsep=1.5pt
       \begin{array}{ll}
        -\big(a_1+b_1\sigma^{2(\theta_1-1)}\big)\Delta u=\lambda H_u(x,u,v)\ \ \ &\ \mbox{in}\ \ \ \Omega,\\[2mm]
        -\big(a_2+b_2\tau^{2(\theta_2-1)}\big)\Delta v=\lambda H_v(x,u,v)\ \ \ &\ \mbox{in}\ \ \ \Omega,\\[2mm]
        u=0, v=0\ \ \ \ &\ \mbox{on}\ \ \ \partial\Omega.
        \end{array}
    \right.
    \end{eqnarray}
    \begin{proof}
    By $J_\lambda'(W_n)\rightarrow 0$ and Remark \ref{rem4.3}, we get that
    \begin{align*}
    \begin{split}
    \big(a_1+b_1\sigma^{2(\theta_1-1)}\big)\int_{\Omega}\nabla u_0 \nabla \varphi +\big(a_2+b_2\tau^{2(\theta_2-1)}\big)\int_{\Omega}\nabla v_0 \nabla \psi -\lambda\int_{\Omega}\Phi\cdot\nabla H(x,W_0)=0,
    \end{split}
    \end{align*}
    $\mbox{for all}\ \Phi=(\varphi,\psi)\in C^\infty_0(\Omega,\mathbb{R}^2)$. Since $C^\infty_0(\Omega,\mathbb{R}^2)$ is dense in $H^1_0(\Omega,\mathbb{R}^2)$, we conclude this claim.
    \end{proof}

\noindent{\bfseries Claim 3.} \ \ \ $\lim_{n\rightarrow \infty}\|W_n\|^2-\|W_0\|^2<\frac{4\pi}{\beta_0}$.
    \begin{proof}
    Suppose by contradiction that $\lim_{n\rightarrow \infty}\|W_n\|^2-\|W_0\|^2=\sigma^2-\|u_0\|^2+\tau^2-\|v_0\|^2 \geq \frac{4\pi}{\beta_0}$ where $\sigma$ and $\tau$ are given in Claim 2. Thus, at least one between $\sigma^2-\|u_0\|^2$ and $\tau^2-\|v_0\|^2$ is not less then $\frac{2\pi}{\beta_0}$, let $\sigma^2-\|u_0\|^2 \geq \frac{2\pi}{\beta_0}$. Therefore, from (\ref{5.2}), we have
    \begin{align*}
    \begin{split}
    c_{M,\lambda}
    = & \lim_{n\rightarrow\infty}\big[J_\lambda(W_n)-\frac {1}{2\theta_{max}}\langle J_\lambda'(W_n),W_n\rangle\big] \\
    = & \lim_{n\rightarrow \infty} \Bigg[\frac {1}{2}\Big(a_1\|u_n\|^2+\frac{b_1}{\theta_1}\|u_n\|^{2\theta_1}\Big)
    +\frac {1}{2}\Big(a_2\|v_n\|^2+\frac{b_2}{\theta_2}\|v_n\|^{2\theta_2}\Big) \\
    & -\frac{1}{2\theta_{max}}\big(a_1+b_1\|u_n\|^{2(\theta_1-1)}\big)\|u_n\|^2
    -\frac{1}{2\theta_{max}}\big(a_2+b_2\|v_n\|^{2(\theta_2-1)}\big)\|v_n\|^2  \\
    & +\lambda\int_{\Omega}\left (\frac{1}{2\theta_{max}} W_n\cdot \nabla H(x,W_n) -H(x,W_n)\right )dx\Bigg] \\
    \geq & \lim\inf_{n\rightarrow \infty}\Big[\big(\frac {1}{2}-\frac{1}{2\theta_{max}}\big)a_1\|u_n\|^2
    +\big(\frac {1}{2\theta_1}-\frac{1}{2\theta_{max}}\big)b_1\|u_n\|^{2\theta_1}\Big] \\
    & +\lim\inf_{n\rightarrow \infty}\Big[\big(\frac {1}{2}-\frac{1}{2\theta_{max}}\big)a_2\|v_n\|^2
    +\big(\frac {1}{2\theta_1}-\frac{1}{2\theta_{max}}\big)b_2\|v_n\|^{2\theta_2}\Big] \\
    & +\lambda\lim\inf_{n\rightarrow \infty}\int_{\Omega}\left (\frac{1}{2\theta_{max}} W_n\cdot \nabla H(x,W_n) -H(x,W_n)\right)dx \\
    \geq & \lim\inf_{n\rightarrow \infty}\Big[\big(\frac {1}{2}-\frac{1}{2\theta_{max}}\big)a_1\|u_n\|^2
    +\big(\frac {1}{2\theta_1}-\frac{1}{2\theta_{max}}\big)b_1\|u_n\|^{2\theta_1}\Big] \\
    & +\frac{\lambda}{2\theta_{max}} \lim\inf_{n\rightarrow \infty}\int_{\Omega}\big(W_n\cdot \nabla H(x,W_n) -2\theta_{max} H(x,W_n)\big)dx. \\
    \end{split}
    \end{align*}
    Then by $(H_3)$, Remark \ref{rem4.3} and Lemma \ref{lem4.4}, we obtain
    \begin{align*}
    \begin{split}
    c_{M,\lambda}
    \geq &  \Big[\big(\frac {1}{2}-\frac{1}{2\theta_{max}}\big)a_1\frac{2\pi}{\beta_0}
    +\big(\frac {1}{2\theta_1}-\frac{1}{2\theta_{max}}\big)b_1(\frac{2\pi}{\beta_0})^{\theta_1}\Big] \\
    & +\frac{\lambda}{2\theta_{max}} \lim\inf_{n\rightarrow \infty}\int_{\Omega}\big(W_n\cdot \nabla H(x,W_n) -W_0\cdot \nabla H(x,W_0)\big)dx. \\
    = & \frac {1}{2}\Big[a_1\frac{2\pi}{\beta_0}+\frac{b_1}{\theta_1}(\frac{2\pi}{\beta_0})^{\theta_1}\Big] \\
    & -\frac{1}{2\theta_{max}} \Big[a_1\frac{2\pi}{\beta_0}+b_1(\frac{2\pi}{\beta_0})^{\theta_1}
    -\lambda\lim\inf_{n\rightarrow \infty}\int_{\Omega}\big(W_n\cdot \nabla H(x,W_n) -W_0\cdot \nabla H(x,W_0)\big)dx\Big].
    \end{split}
    \end{align*}
    Here, we assert
    \begin{align*}
    0\geq a_1\frac{2\pi}{\beta_0}+b_1(\frac{2\pi}{\beta_0})^{\theta_1}-\lambda \lim\inf_{n\rightarrow \infty}\int_{\Omega}\big(W_n\cdot \nabla H(x,W_n) -W_0\cdot \nabla H(x,W_0)\big)dx.
    \end{align*}
    Indeed, $\langle J_\lambda'(W_n),W_n\rangle\rightarrow 0$ and Claim 2 indicate that
    \begin{align*}
    \begin{split}
    0&=\Big(a_1+b_1\sigma^{2(\theta_1-1)}\Big)\sigma^2+\Big(a_2+b_2\tau^{2(\theta_2-1)}\Big)\tau^2
    -\lambda\lim\inf_{n\rightarrow \infty}\int_{\Omega}W_n\cdot \nabla H(x,W_n)dx. \\
    0&=\Big(a_1+b_1\sigma^{2(\theta_1-1)}\Big)\|u_0\|^2+\Big(a_2+b_2\tau^{2(\theta_1-1)}\Big)\|v_0\|^2
    -\lambda\int_{\Omega}W_0\cdot \nabla H(x,W_0)dx.
    \end{split}
    \end{align*}
    Subtracting the second equality from the first one, we get
    \begin{align*}
    \begin{split}
    0= & \Big(a_1+b_1\sigma^{2(\theta_1-1)}\Big)(\sigma^2-\|u_0\|^2)
    +\Big(a_2+b_2\tau^{2(\theta_2-1)}\Big)(\tau^2-\|v_0\|^2) \\
    & -\lambda\lim\inf_{n\rightarrow \infty}\int_{\Omega}\big(W_n\cdot \nabla H(x,W_n) -W_0\cdot \nabla H(x,W_0)\big)dx \\
    \geq & \Big(a_1+b_1\sigma^{2(\theta_1-1)}\Big)\frac{2\pi}{\beta_0}
    -\lambda\lim\inf_{n\rightarrow\infty}\int_{\Omega}\big(W_n\cdot \nabla H(x,W_n) -W_0\cdot \nabla H(x,W_0)\big)dx\\
    \geq & a_1\frac{2\pi}{\beta_0}+b_1(\frac{2\pi}{\beta_0})^{\theta_1}
    -\lambda\lim\inf_{n\rightarrow \infty}\int_{\Omega}\big(W_n\cdot \nabla H(x,W_n) -W_0\cdot \nabla H(x,W_0)\big)dx.
    \end{split}
    \end{align*}
    This shows the assertion. Then we can conclude that
    \begin{align*}
    c_{M,\lambda}\geq \frac {1}{2}\Big[a_1\frac{2\pi}{\beta_0}
    +\frac{b_1}{\theta_1}(\frac{2\pi}{\beta_0})^{\theta_1}\Big],
    \end{align*}
    which contradicts the assumption. Thus this claim is proved.
    \end{proof}

\noindent{\bfseries Claim 4.} \ \ \ $J_\lambda(W_0)=c_{M,\lambda}$.
    \begin{proof}
    By using Lemma \ref{lem4.4}, and semicontinuity of norm, we have that $J_\lambda(W_0)\leq c_{M,\lambda}$. We will show that the case $J_\lambda(W_0)<c_{M,\lambda}$ can not occur. We suppose $J_\lambda(W_0)<c_{M,\lambda}$, and define $\sigma$ and $\tau $ as in Claim 2, then we get $\sigma>\|u_0\|$ or $\tau>\|v_0\|$. Next, defining $Z_n=\frac{W_n}{\|W_n\|}$ and $Z_0=\frac{W_0}{\sqrt{\sigma^2+\tau^2}}=\frac{W_0}{\lim_{n\rightarrow \infty}\|W_n\|}$, we have $Z_n\rightharpoonup Z_0$ in $H^1_0(\Omega,\mathbb{R}^2)$ and $\|Z_0\|<1$, then Lemma \ref{lemtm5} which shows that
    \begin{align}\label{5.16}
    \sup_n \int_{\Omega}\exp(p|Z_n|^2)dx<\infty,\ \ \forall p<\frac{4\pi} {1-\|Z_0\|^2}.
    \end{align}
    Since $\sigma^2+\tau^2=\frac{\sigma^2+\tau^2-\|W_0\|^2}{1-\|Z_0\|^2}$, by Claim 3, it follows that
    \begin{align*}
    \lim_{n\rightarrow \infty}\|W_n\|^2 <\frac{4\pi/\beta_0}{1-\|Z_0\|^2}.
    \end{align*}
    Thus, there exists $\alpha>0$ such that $\beta_0\|W_n\|^2<\alpha<\frac{4\pi}{1-\|Z_0\|^2}$ for $n$ sufficiently large. For $q>1$ close to 1 and $\beta>\beta_0$ close to $\beta_0$ we still have $q\beta\|W_n\|^2\leq\alpha<\frac{4\pi}{1-\|Z_0\|^2}$, and provoking (\ref{5.16}), for some $C>0$ and $n$ large enough, we conclude that
    \begin{align*}
    \int_{\Omega}\exp(q\beta|W_n|^2)dx\leq \int_{\Omega}\exp(\alpha|Z_n|^2)dx\leq C.
    \end{align*}
    Then, by using $(H_2)$ and critical growth condition $(TM)^c$, (\ref{5.14}), 
    H\"{o}lder's inequality and Sobolev imbedding, we can obtain
    \begin{align*}
    \begin{split}
    \left|\int_{\Omega} (W_n-W_0)\cdot\nabla H(x,W_n) dx\right|
    \leq & C_1\int_{\Omega} |W_n|\cdot|W_n-W_0| dx \\
    & +C_2\int_{\Omega}|W_n-W_0|\exp(\beta|W_n|^2) dx \\
    \leq & C_1\|W_n-W_0\|_2\|W_n\|_2+C_3\|W_n-W_0\|_{\frac{q}{q-1}}\rightarrow 0.
    \end{split}
    \end{align*}
    Since $\langle J_\lambda'(W_n),W_n-W_0\rangle\rightarrow 0$, it follows that
    \begin{align*}
    \big(a_1+b_1\|u_n\|^{2(\theta_1-1)}\big)\langle u_n,u_n-u_0 \rangle+\big(a_2+b_2\|v_n\|^{2(\theta_2-1)}\big)\langle v_n,v_n-v_0 \rangle \rightarrow 0.
    \end{align*}
    On the other hand,
    \begin{align*}
    \begin{split}
    \big(a_1+b_1\|u_n\|^{2(\theta_1-1)}\big)\langle u_n,u_n-u_0 \rangle&=\big(a_1+b_1\|u_n\|^{2(\theta_1-1)}\big)\|u_n\|^2-\big(a_1+b_1\|u_n\|^{2(\theta_1-1)}\big)\langle u_n,u_0 \rangle \\
    &\rightarrow\big(a_1+b_1\sigma^{2(\theta_1-1)}\big)\sigma^2-\big(a_1+b_1\sigma^{2(\theta_1-1)}\big)\|u_0\|^2, \\
    \big(a_2+b_2\|v_n\|^{2(\theta_2-1)}\big)\langle v_n,v_n-v_0 \rangle&=\big(a_2+b_2\|v_n\|^{2(\theta_2-1)}\big)\|v_n\|^2-\big(a_2+b_2\|v_n\|^{2(\theta_2-1)}\big)\langle v_n,v_0 \rangle \\
    &\rightarrow\big(a_2+b_2\tau^{2(\theta_2-1)}\big)\tau^2-\big(a_2+b_2\tau^{2(\theta_2-1)}\big)\|v_0\|^2,
    \end{split}
    \end{align*}
    which implies that $\sigma=\|u_0\|$ and $\tau=\|v_0\|$, i.e. $\|W_n\|\rightarrow \|W_0\|$. Then, we must have $J_\lambda(W_0)=c_{M,\lambda}$, what is an absurd. Thus, this claim is proved.
    \end{proof}

\noindent{\bfseries Finalizing the proof of Lemma \ref{lemcpsc}}: By Lemma \ref{lem4.4} and Claim 4, we can know that \begin{align*}
    \begin{split}
    \Big(a_1\sigma^2+\frac{b_1}{\theta_1}\sigma^{2\theta_1}\Big)+&\Big(a_2\tau^2+\frac{b_2}{\theta_2}\tau^{2\theta_2}\Big)
    =\Big(a_1\|u_0\|^2+\frac{b_1}{\theta_1}\|u_0\|^{2\theta_1}\Big)+\Big(a_2\|v_0\|^2+\frac{b_2}{\theta_2}\|v_0\|^{2\theta_2}\Big).
    \end{split}
    \end{align*}
    By the semicontinuity of norm, we have $\sigma=\|u_0\|,\ \tau=\|v_0\|$. Then by Br\'{e}zis-Lieb Lemma \cite{bl2}, we have $u_n\rightarrow u_0,\ v_n\rightarrow v_0$ in $H^1_0(\Omega)$, i.e. $W_n\rightarrow W_0$ in $H^1_0(\Omega,\mathbb{R}^2)$.
    \end{proof}

    \begin{lemma}\label{lemc*y}
    Suppose that $H_u,\ H_v$ have the critical growth $(TM)^{c}$ and $(H_1)$, $(H_2)$, $(H_3)$, $(SL)$ are satisfied. Then, we have $0<c_{M,\lambda}\rightarrow 0$ as $\lambda\rightarrow +\infty$.
    \end{lemma}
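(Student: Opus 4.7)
The plan is to control $c_{M,\lambda}$ from above by maximizing $J_\lambda$ along a fixed ray $s\mapsto sW_0$, then exploiting the strong superlinearity of $H$ encoded in $(H_3)$. Fix once and for all a nonzero vector $W_0=(u_0,v_0)\in H^1_0(\Omega,\mathbb{R}^2)$ and set
\[
g(s):=\int_\Omega H(x,sW_0)\,dx.
\]
Assumption $(H_1)$ combined with the strict inequality $0<2\theta_{max}H$ in $(H_3)$ gives $H(x,W)>0$ for every $W\neq (0,0)$, hence $g(s)>0$ for every $s>0$.

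The central preparatory step converts $(H_3)$ into a multiplicative lower bound on $g$. Rewriting $(sW_0)\cdot\nabla H(x,sW_0)\geq 2\theta_{max}H(x,sW_0)$ as $\tfrac{d}{ds}\log H(x,sW_0)\geq 2\theta_{max}/s$ (valid wherever $sW_0(x)\neq 0$), integrating from $\delta$ to $s$ and then in $x\in\Omega$ yields the key estimate
\[
g(s)\geq (s/\delta)^{2\theta_{max}}g(\delta),\qquad 0<\delta\leq s.
\]
For $\lambda$ large enough, Lemma \ref{lemgc2} produces $T_\lambda$ with $\|T_\lambda W_0\|>\rho_0$ and $J_\lambda(T_\lambda W_0)<0$; choosing $E_0:=T_\lambda W_0$ in the definition of $\Upsilon$ and picking the straight path $\gamma_\lambda(t)=tT_\lambda W_0$, we obtain
\[
c_{M,\lambda}\leq \sup_{s\geq 0}f_\lambda(s),\qquad f_\lambda(s):=As^2+B_1s^{2\theta_1}+B_2s^{2\theta_2}-\lambda g(s),
\]
where $A=\tfrac{a_1\|u_0\|^2+a_2\|v_0\|^2}{2}>0$ and $B_i=\tfrac{b_i\|w_{0,i}\|^{2\theta_i}}{2\theta_i}\geq 0$, with $w_{0,1}=u_0$, $w_{0,2}=v_0$. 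Set $K:=A+B_1+B_2$.

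The crux is to show $\sup_{s\geq 0}f_\lambda(s)\leq K\delta^2$ for any prescribed $\delta\in(0,1]$ and all sufficiently large $\lambda$. Split $[0,\infty)$ into three pieces: $(i)$ on $[0,\delta]$, dropping $-\lambda g\leq 0$ and using $2\theta_i>2,\ \delta\leq 1$ yields $f_\lambda(s)\leq K\delta^2$; $(ii)$ on $[\delta,1]$, the polynomial part is bounded by $K$ while $g(s)\geq g(\delta)>0$, so $f_\lambda(s)\leq K-\lambda g(\delta)\leq 0$ as soon as $\lambda\geq K/g(\delta)$; $(iii)$ on $[1,\infty)$, using $\theta_i\leq\theta_{max}$ and $s\geq 1$ we have $f_0(s)\leq Ks^{2\theta_{max}}$ while the integrated form of $(H_3)$ gives $g(s)\geq g(1)s^{2\theta_{max}}$, so $f_\lambda(s)\leq (K-\lambda g(1))s^{2\theta_{max}}\leq 0$ for $\lambda\geq K/g(1)$. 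Combining the three, $c_{M,\lambda}\leq K\delta^2$ for all $\lambda\geq K\max\{g(\delta)^{-1},g(1)^{-1}\}$; letting $\delta\to 0^+$ gives $\limsup_{\lambda\to\infty}c_{M,\lambda}\leq 0$, and the positivity $c_{M,\lambda}>0$ from Lemma \ref{lemgc4} concludes $c_{M,\lambda}\to 0^+$.

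The main obstacle is the intermediate interval $[\delta,1]$: neither the small-$s$ polynomial estimate nor the large-$s$ homogeneity coming from the integrated $(H_3)$ is directly useful there, so one genuinely needs the strict positivity $g(\delta)>0$ (ultimately a consequence of the strict inequality in $(H_3)$) to ensure that $-\lambda g(s)$ dominates the bounded positive part uniformly on this compact band as $\lambda\to\infty$.
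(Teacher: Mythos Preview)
Your proof is correct and takes a genuinely different route from the paper's.

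The paper fixes $\Phi\in C_0^\infty(\Omega,\mathbb{R}^2)\setminus\{(0,0)\}$, locates the maximizer $t_\lambda$ of $t\mapsto J_\lambda(t\Phi)$, and uses the first-order condition at $t_\lambda$ together with the polynomial lower bound $H(x,W)\geq C_1|W|^\xi-C_2$ (coming from $(H_2)$ via Remark~\ref{remfc}, with $\xi>2\theta_{max}$) to show that $\{t_\lambda\}$ stays bounded. Then a contradiction argument invoking dominated convergence forces $t_\lambda\to 0$, whence $c_{M,\lambda}\leq J_\lambda(t_\lambda\Phi)\to 0$.

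You instead integrate the differential inequality encoded in $(H_3)$ to obtain the homogeneity-type bound $g(s)\geq (s/\delta)^{2\theta_{max}}g(\delta)$ and then estimate $\sup_{s\geq 0}J_\lambda(sW_0)$ directly by a three-zone splitting $[0,\delta]\cup[\delta,1]\cup[1,\infty)$. This is more elementary: it avoids both the contradiction step and dominated convergence, does not need $W_0\in C_0^\infty$, and uses only $(H_3)$ (not the auxiliary polynomial bound from $(H_2)$). It also yields an explicit quantitative threshold, namely $c_{M,\lambda}\leq K\delta^2$ whenever $\lambda\geq K\max\{g(\delta)^{-1},g(1)^{-1}\}$. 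The paper's approach, on the other hand, is the classical ``track the maximizer'' argument and makes transparent that the crucial object shrinking is $t_\lambda$ itself. A minor cosmetic point: you tacitly use that $c_{M,\lambda}$ does not depend on the particular endpoint $E_0$ chosen in Lemma~\ref{lemgc2}, so that taking $E_0=T_\lambda W_0$ is legitimate; this is standard once $J_\lambda(E_0)<0<\tau_0$.
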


    \begin{proof}
    Previous lemmas and the definition of $c_{M,\lambda}$ (see (\ref{5.1})) imply $c_{M,\lambda}>0$ for each $\lambda>0$. Fixing any nonnegative functions $\Phi=(\phi,\varphi)\in C^\infty_0(\Omega,\mathbb{R}^2)\backslash \{(0,0)\}$, then there exists $t_\lambda>0$ such that $J_\lambda(t_\lambda\Phi)=\max_{t>0}J_\lambda(t\Phi)$. Since $\frac{dJ_\lambda(t\Phi)}{dt}\big|_{t=t_\lambda}=0$, then by $(H_3)$ and (\ref{rf3arb}) we have
    \begin{align}\label{c*y1}
    \begin{split}
    a_1t_\lambda^2\|\phi\|^2+a_2t_\lambda^2\|\varphi\|^2+b_1 t_\lambda^{2\theta_1}\|\phi\|^{2\theta_1}
    +b_2 t_\lambda^{2\theta_2}\|\varphi\|^{2\theta_2}
    = & \lambda\int_{\Omega}t_\lambda\Phi \cdot \nabla H(x,t_\lambda \Phi)dx \\
    \geq & \lambda \left(C_1 t_\lambda^{\xi} \int_{\Omega}|\Phi|^{\xi} dx-C_2\right)
    \end{split}
    \end{align}
    for some $C_1,\ C_2>0$ with $\xi>2\theta_{max}$. This implies that $t_\lambda$ is bounded from above uniformly in $\lambda$. In addition, we claim $t_\lambda\rightarrow 0 $ as $\lambda \rightarrow +\infty$. If not, we can choose a sequence $\{\lambda_n\}\subset \mathbb{R}^+$ and a constant $t_0>0$ such that $\lambda_n \rightarrow +\infty$ and $t_{\lambda_n}\rightarrow t_0>0$. Since $\Phi\in C^\infty_0(\Omega,\mathbb{R}^2)$, we have $\nabla H(x,t_{\lambda_n} \Phi)\rightarrow \nabla H(x,t_0 \Phi)$ in $L^1(\Omega,\mathbb{R}^2)$ by \cite[Lemma 2.1]{dmr1}. Then we get
    \begin{align*}
    \begin{split}
    \int_{\Omega}\Phi \cdot \nabla H(x,t_{\lambda_n} \Phi)dx \rightarrow \int_{\Omega}\Phi \cdot \nabla H(x,t_0 \Phi)dx.
    \end{split}
    \end{align*}
    Noting that the right side is strictly positive by $(H_3)$. Then it follows from (\ref{c*y1}) that
    \begin{align*}
    \begin{split}
    a_1t_{\lambda_n}^2\|\phi\|^2+a_2t_{\lambda_n}^2\|\varphi\|^2+b_1 t_{\lambda_n}^{2\theta_1}\|\phi\|^{2\theta_1}
    +b_2 t_{\lambda_n}^{2\theta_2}\|\varphi\|^{2\theta_2} \rightarrow +\infty
    \end{split}
    \end{align*}
    as $n\rightarrow \infty$, this is impossible. Therefore by $(H_1)$, we can conclude that
    \begin{align*}
    \begin{split}
    c_{M,\lambda}\leq J_\lambda(t_\lambda\Phi) \leq
    t^2_\lambda\frac{a_1\|\phi\|^2}{2}+t^2_\lambda\frac{a_2\|\varphi\|^2}{2}
    +t_{\lambda}^{2\theta_1}\frac{b_1 \|\phi\|^{2\theta_1}}{2\theta_1}
    +t_{\lambda}^{2\theta_2}\frac{b_2 \|\varphi\|^{2\theta_2}}{2\theta_2} \rightarrow 0
    \end{split}
    \end{align*}
    as $\lambda\rightarrow +\infty$. Thus, this proof is complete.
    \end{proof}

\noindent{\bfseries Proof of Theorem \ref{thmcerd}:} By Lemma \ref{lemc*y}: $0<c_{M,\lambda}\rightarrow 0$ as $\lambda\rightarrow +\infty$, thus, there exists a constant $\lambda_*>0$ large enough such that for all $\lambda\geq \lambda_*$, it holds that
    \begin{align*}
    c_{M,\lambda}<\min\bigg\{\frac{1}{2}\Big(a_1\frac{2\pi}{\beta_0}+\frac{b_1}{\theta_1}\big(\frac{2\pi}{\beta_0}\big)^{\theta_1}\Big),
    \frac{1}{2}\Big(a_2\frac{2\pi}{\beta_0}+\frac{b_2}{\theta_2}\big(\frac{2\pi}{\beta_0}\big)^{\theta_2}\Big)\bigg\}.
    \end{align*}
    Lemma \ref{lemcpsc} indicts that the functional $J_\lambda$ satisfies the Palais-Smale condition at level $c_{M,\lambda}$
    for each $\lambda\geq \lambda_*$.

    Then by using Minimax principle, there exists a critical point $W_*$ for $J_\lambda$ at level $c_{M,\lambda}$ for each $\lambda\geq \lambda_*$.
    We affirm that $W_*\neq (0,0)$. In fact, suppose by contradiction that $ W_*\equiv (0,0)$. Because $J_\lambda$ is continuous, $J_\lambda(W_n)\rightarrow c_{M,\lambda}$ and $W_n\rightarrow W_*$ in $H^1_0(\Omega,\mathbb{R}^2)$, we can know that $0<c_{M,\lambda}=\lim_{n\rightarrow \infty}J_\lambda(W_n)=J_\lambda(W_*)=J_\lambda((0,0))=0$, what is absurd. Thus, the proof of Theorem \ref{thmcerd} is complete.
    \qed

\section{{\bfseries Multiplicity results}}\label{vfh6}

    The next lemma motivates us to investigate the multiplicity of solutions in the asymptotically case.
   \begin{lemma}\label{lemgc3}
    Suppose that $(H_1)$, $(H_2)$, $(AL)$ hold and $H_u,\ H_v$ have subcritical growth $(TM)^{sc}$ (or critical growth $(TM)^{c}$). Then, there exists a constant $b_0>0$ such that for every $b_{min}> b_0$, we have a constant $\Lambda_0(b_{min})>\frac{a_{max}}{a_{min}}\lambda_1$ such that there exist constants $\tau_1>0$ and $0<\rho_1<\rho_2<1$ such that
    \begin{align*}
    J_\lambda(W)\geq \tau_1,\ \ \forall\ W\in H^1_0(\Omega,\mathbb{R}^2)\ \mbox{with}\ \rho_1\leq \|W\|\leq \rho_2,
    \end{align*}
    and
    \begin{align*}
    -\infty<\inf_{\|W\|\leq \rho_1} J_\lambda(W)<0,
    \end{align*}
    for all $\frac{a_{max}}{a_{min}}\lambda_1<\lambda<\Lambda_0(b_{min})$, where $b_{min}>b_0$. Furthermore, we can take the constant $\Lambda_0(b_{min})$ depending only on $b_{min}$ such that $\Lambda_0(b_{min})\rightarrow \infty$ as $b_{min}\rightarrow \infty$.
    \end{lemma}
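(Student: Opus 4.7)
The two-level geometry requires both a positive barrier $J_\lambda\geq\tau_1$ on an annulus $\rho_1\leq\|W\|\leq\rho_2$ and a dip $\inf J_\lambda<0$ on $\|W\|\leq\rho_1$. Both come from pitting $(AL)$ against the Kirchhoff stabilizer. The plan is: derive a master lower bound $J_\lambda(W)\geq\|W\|^2 h(\|W\|)$; pick a reference scale $s_1\in(0,1)$ on which $h>0$; then define $\Lambda_0(b_{min})$ as the range of $\lambda$ on which this analysis persists; finally produce the dip by a test-function calculation.

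\textbf{Master inequality.} Using $(AL)$ together with subcritical/critical growth, for any $\epsilon>0$ and $q>2$ one gets the pointwise bound $H(x,W)\leq(a_{min}/2+\epsilon)|W|^2+C_\epsilon|W|^q\exp(\beta|W|^2)$ with $\beta>0$ (resp.\ $\beta>\beta_0$). Integrating, using Poincar\'e (constant $\lambda_1$) and Lemma \ref{lemiiw} on $\|W\|\leq N$ with $\beta N^2<2\pi$, yields $\int_\Omega H\leq(a_{min}+2\epsilon)\|W\|^2/(2\lambda_1)+C'\|W\|^q$. For the Kirchhoff term, Lemma \ref{lemiit} applied with $t=\theta_{max}$ to $x=\|u\|^2,\,y=\|v\|^2$, together with $\|u\|^{2\theta_i}\geq\|u\|^{2\theta_{max}}$ for $\|u\|\leq 1$ (and the analogue for $v$), gives
\[
\frac{b_1}{2\theta_1}\|u\|^{2\theta_1}+\frac{b_2}{2\theta_2}\|v\|^{2\theta_2}\geq B\|W\|^{2\theta_{max}},\quad B:=\frac{b_{min}}{2\theta_{max}(2^{\theta_{max}}-1)}.
\]
Combining, for $\|W\|\leq\min\{N,1\}$,
\[
J_\lambda(W)\geq\|W\|^2 h(\|W\|),\quad h(s):=A_\lambda+Bs^{2\theta_{max}-2}-\lambda C's^{q-2},
\]
where $A_\lambda:=[a_{min}-\lambda(a_{min}+2\epsilon)/\lambda_1]/2$ is negative as soon as $\lambda>\lambda_1$.

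\textbf{Barrier and definition of $\Lambda_0(b_{min})$.} For $\lambda>\lambda_1$, pick $s_1\in(0,1)$ by $s_1^{2\theta_{max}-2}=2|A_\lambda|/B$, so that $A_\lambda+Bs_1^{2\theta_{max}-2}=|A_\lambda|>0$. This requires $2|A_\lambda|\leq B$ (a linear bound on $\lambda$ in terms of $b_{min}$), and the residual $\lambda C's_1^{q-2}$ is controlled by $|A_\lambda|/2$ provided $b_{min}$ is sufficiently large relative to $\lambda$; I \emph{define} $\Lambda_0(b_{min})$ to be the supremum of $\lambda$ for which both conditions are satisfied. Scaling shows $\Lambda_0(b_{min})\asymp b_{min}$, hence $\Lambda_0(b_{min})\to\infty$ as $b_{min}\to\infty$. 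At $\lambda=(a_{max}/a_{min})\lambda_1$ the constraints reduce to $b_{min}$ exceeding an explicit threshold, which I take as $b_0$; then $\Lambda_0(b_{min})>(a_{max}/a_{min})\lambda_1$ for all $b_{min}>b_0$. Continuity of $h$ around $s_1$ provides $0<\rho_1<\rho_2<1$ with $h\geq h(s_1)/2$ on $[\rho_1,\rho_2]$, so $J_\lambda(W)\geq\tau_1:=\rho_1^2 h(s_1)/2>0$ on the annulus.

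\textbf{Dip.} In the standard reading of $(AL)$ (as examples (iii)--(iv) of Remark \ref{remext} confirm), one also has the matching lower bound $H(x,W)\geq(a_{min}/2-\epsilon)|W|^2$ for $|W|$ small. With $\phi_1>0$ the first Dirichlet eigenfunction of $-\Delta$ normalized by $\|\phi_1\|=1$, and $W_t=(t\phi_1,0)$, for small $t$
\[
J_\lambda(W_t)\leq\tfrac{t^2}{2}\left[a_1-\frac{\lambda(a_{min}-2\epsilon)}{\lambda_1}\right]+\frac{b_1}{2\theta_1}t^{2\theta_1},
\]
and the bracket is negative since $\lambda>(a_{max}/a_{min})\lambda_1\geq(a_1/a_{min})\lambda_1$, so $J_\lambda(W_t)<0$ for $t$ small enough to place $\|W_t\|=t\leq\rho_1$. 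Boundedness below of $\inf_{\|W\|\leq\rho_1}J_\lambda$ is immediate from the master inequality, since $h$ is continuous on $[0,\rho_1]$. The principal technical hurdle is making the $(\lambda,b_{min})$ dependence in $h(s_1)$ explicit enough to pin down the threshold $b_0$ and verify both $\Lambda_0(b_{min})>(a_{max}/a_{min})\lambda_1$ for $b_{min}>b_0$ and the divergence $\Lambda_0(b_{min})\to\infty$.
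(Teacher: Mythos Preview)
Your overall strategy matches the paper's exactly: bound $H$ from above via $(AL)$ plus growth, lower-bound the Kirchhoff part via Lemma~\ref{lemiit}, read off an annular positivity of $J_\lambda$, and produce the dip with a first-eigenfunction test. The dip argument with $W_t=(t\phi_1,0)$ is essentially the paper's (the paper uses the vector eigenfunction $\Phi_1$, but these coincide since the vector and scalar first eigenvalues agree).

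The one substantive difference is your choice of remainder $C_\epsilon|W|^q e^{\beta|W|^2}$ with $q>2$. This forces a three-term profile $h(s)=A_\lambda+Bs^{2\theta_{max}-2}-\lambda C' s^{q-2}$ and is precisely what creates the ``principal technical hurdle'' you flag: $\Lambda_0(b_{min})$ then has to be defined implicitly as a supremum over two coupled constraints, and the scaling $\Lambda_0\asymp b_{min}$ needs a separate argument. The paper sidesteps this entirely by taking the remainder with $q=2$, i.e.\ $H(x,W)\leq \tfrac{a_{min}+1}{2}|W|^2+C|W|^2 e^{\beta|W|^2}$; after H\"older and Lemma~\ref{lemtm4} this extra term is again $\leq C'\|W\|^2$, so it merges with the quadratic part and one is left with the clean two-term bound
\[
J_\lambda(W)\geq \frac{b_{min}}{2\theta_{max}(2^{\theta_{max}}-1)}\|W\|^{2\theta_{max}}-\lambda C_*\|W\|^2.
\]
From this, $\Lambda_0(b_{min})$ and $b_0$ are written down explicitly (linear in $b_{min}$), the divergence $\Lambda_0(b_{min})\to\infty$ is immediate, and $\rho_1,\rho_2$ are given by an explicit inequality rather than a continuity argument. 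So your proof would go through, but switching to $q=2$ removes exactly the difficulty you identify at the end.
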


    \begin{proof}
    By assumptions $(AL)$, $(TM)^{sc}$ (or $(TM)^{c}$) and $(H_2)$, we can choose $\beta=\max\{1,2\beta_0\}$ (that is, for subcritical case we take $\beta=1$ and for critical case we take $\beta=2\beta_0$) such that
    \begin{align*}
    H(x,W) \leq \frac{(a_{min}+1)|W|^2}{2}+C|W|^2 e^{\beta |W|^2},\ \forall (x,W)\in \Omega\times \mathbb{R}^2.
    \end{align*}
    Thus by Lemmas \ref{lemtm5}, \ref{lemiit} and H\"{o}lder's inequality, we obtain
    \begin{align}\label{gc3.3}
    \begin{split}
    J_\lambda(W)
    = & \frac {1}{2}\Big(a_1\|u\|^2+\frac{b_1}{\theta_1}\|u\|^{2\theta_1}\Big)+\frac {1}{2}\Big(a_2\|v\|^2+\frac{b_2}{\theta_2}\|v\|^{2\theta_2}\Big)-\lambda\int_{\Omega}H(x,W)dx \\
    \geq & \frac{b_{min}}{2\theta_{max}}\big(\|u\|^{2\theta_{max}}+\|v\|^{2\theta_{max}}\big)+\frac{a_{min}}{2}\big(\|u\|^{2}+\|v\|^{2}\big) \\
    & -\frac{\lambda (a_{min}+1)}{2}\int_{\Omega}|W|^2dx-\lambda C\int_{\Omega}|W|^2\exp(\beta |W|^2)dx \\
    \geq & \frac{b_{min}}{2\theta_{max}(2^{\theta_{max}}-1)}\|W\|^{2\theta_{max}}-\left[\frac{\lambda (a_{min}+1)}{2\lambda_1}-\frac{a_{min}}{2}\right]\|W\|^2 \\
    & -\lambda C\Big(\int_{\Omega}|W|^{4}dx\Big)^{1/2}\left(\int_{\Omega}\exp\left(2\beta \|W\|^2(W/\|W\|)^2\right)dx\right)^{1/2},
    \end{split}
    \end{align}
    for all $W\in H^1_0(\Omega,\mathbb{R}^2)$ with $\|W\|\leq 1$.
    If $\|W\|^2\leq \min\{1,\frac{\pi}{\beta_0}\}$, then from Lemma \ref{lemtm4}, it holds that
    \begin{align}\label{gc3.3dd}
    \begin{split}
    J_\lambda(W)\geq &\frac{b_{min}}{2\theta_{max}(2^{\theta_{max}}-1)}\|W\|^{2\theta_{max}}
    -\Big[\frac{\lambda (a_{min}+1)}{2\lambda_1}-\frac{a_{min}}{2}\Big]\|W\|^2-\lambda C'\|W\|^2 \\
     \geq &\frac{b_{min}}{2\theta_{max}(2^{\theta_{max}}-1)}\|W\|^{2\theta_{max}}-\lambda C_*\|W\|^2 \\
    = &\frac{b_{min}}{2\theta_{max}(2^{\theta_{max}}-1)}\|W\|^2
    \left[\|W\|^{2\theta_{max}-2}-\frac{2\theta_{max}(2^{\theta_{max}}-1)\lambda C_*}{b_{min}}\right],
    \end{split}
    \end{align}
    for some constant $C_*>0$. Then we choose
    \begin{align}\label{deflbm}
    \Lambda_0(b_{min}):=\frac{b_{min}}{2\theta_{max}(2^{\theta_{max}}-1)C_*} \left(\min\{1,\frac{\pi}{\beta_0}\}\right)^{\theta_{max}-1}.
    \end{align}
    It follows that there exists a constant
    \begin{equation}\label{defb0}
    b_0:=2\theta_{max}(2^{\theta_{max}}-1)C_*\left(\min\{1,\frac{\pi}{\beta_0}\}\right)^{1-\theta_{max}}\frac{a_{max}}{a_{min}}\lambda_1>0
    \end{equation}
    such that
    \begin{equation*}
     \Lambda_0(b_{min})>\frac{a_{max}}{a_{min}}\lambda_1 \quad\mbox{for all}\quad b_{min}> b_0.
    \end{equation*}
    Taking $b_{min}> b_0$ and $\frac{a_{max}}{a_{min}}\lambda_1\leq \lambda<\Lambda_0(b_{min})$ we have
    \[\frac{2\theta_{max}(2^{\theta_{max}}-1)C_*\lambda}{b_{min}}<\left(\min\{1,\frac{\pi}{\beta_0}\}\right)^{\theta_{max}-1}.\]
     Then from (\ref{gc3.3dd}), we can choose $0<\rho_1<\rho_2<1$ satisfying
    \[\left[\frac{2\theta_{max}(2^{\theta_{max}}-1)C_*\lambda}{b_{min}}\right]^{\frac{1}{2(\theta_{max}-1)}}<\rho_1<\rho_2<\left(\min\{1,\frac{\pi}{\beta_0}\}\right)^{\frac{1}{2}}\]
     such that
    \begin{align}\label{gc3.4}
    J_\lambda(W)\geq \tau_1,\ \ \mbox{for all}\ \ W\in H^1_0(\Omega,\mathbb{R}^2)\ \mbox{with}\ \rho_1\leq \|W\|\leq \rho_2,
    \end{align}
    for some constant $\tau_1>0$.
    On the other hand, we claim
    \begin{align}\label{gc3.5}
    J_\lambda(t\Phi_1)<0,
    \end{align}
    if $\frac{a_{max}}{a_{min}}\lambda_1<\lambda$ and $t>0$ is sufficiently small, where $\Phi_1=(\phi_1,\varphi_1)$ is the eigenfunction of $\lambda_1$, i.e. $\lambda_1=\frac {\|\Phi_1\|^2}{\int_{\Omega}|\Phi_1|^2dx}$. Actually, fix $\epsilon>0$ such that $\lambda(a_{min}-\epsilon)>\lambda_1a_{max}$.\ $(AL)$ implies that there exists a constant $S_\epsilon$ such that
    \begin{align*}
    H(x,W)\geq \frac{a_{min}-\epsilon}{2}|W|^2,\ \mbox{if}\ 0\leq |W|\leq S_\epsilon \ \mbox{ uniformly for}\ x\in\Omega.
    \end{align*}
    Set $\kappa_\epsilon>0$ such that $\kappa_\epsilon \sup_{x\in \Omega}|\Phi_1|\leq S_\epsilon$. Consequently, by $(H_1)$ we can get that
    \begin{align*}
    \begin{split}
    J_\lambda(t\Phi_1)\leq & \frac{t^2 a_{max}}{2}\|\Phi_1\|^2-
    \frac{t^2 \lambda(a_{min}-\epsilon)}{2}\int_{\Omega}|\Phi_1|^2dx+t^{2\theta_1}\frac{b_1 \|\phi_1\|^{2\theta_1}}{2\theta_1}
    +t^{2\theta_2}\frac{b_2 \|\varphi_1\|^{2\theta_2}}{2\theta_2} \\
    = &-\frac{t^2}{2}\left[\frac{\lambda(a_{min}-\epsilon)}{\lambda_1a_{max}}-1\right]a_{max}\|\Phi_1\|^2
    +t^{2\theta_1}\frac{b_1 \|\phi_1\|^{2\theta_1}}{2\theta_1}+t^{2\theta_2}\frac{b_2 \|\varphi_1\|^{2\theta_2}}{2\theta_2},
    \end{split}
    \end{align*}
    for all $0<t<\kappa_\epsilon$. Since $2\theta_1,2\theta_2> 2$, this concludes the claim. Thus, (\ref{gc3.4}) and (\ref{gc3.5}) end the proof.
    \end{proof}



 In the rest of this section, we always suppose that $\frac{a_{max}}{a_{min}}\lambda_1<\lambda<\Lambda_0(b_{min})$, where $\Lambda_0(b_{min})$ is obtained in Lemma \ref{lemgc3} and $b_{min}> b_0$.

    In order to obtain a weak solution with negative energy, by Lemma \ref{lemgc3}, we define that
    \begin{equation*}
    -\infty<c_{0,\lambda}:=\inf_{\|W\|\leq \rho_1}J_\lambda(W)<0,
    \end{equation*}
    where $\rho_1$ is given as in Lemma \ref{lemgc3}. Since $\overline{B}_{\rho_1}$ is a complete metric space with the metric given by the norm of $H^1_0(\Omega,\mathbb{R}^2)$, convex and the functional $J_\lambda$ is of class $C^1$ and bounded below on $\overline{B}_{\rho_1}$, by the Ekeland variational principle, there exists a sequence $\{W^0_n\}$ in $\overline{B}_{\rho_1}$ such that
    \begin{equation}\label{7.1}
    J_\lambda(W^0_n)\rightarrow c_{0,\lambda}=\inf_{\|W\|\leq \rho_1}J_\lambda(W)<0\ \ \mbox{and}\ \ J_\lambda'(W^0_n)\rightarrow 0.
    \end{equation}
    And in order to obtain another weak solution with positive energy, according to Lemmas \ref{lemgc2} and \ref{lemgc3}, let
    \begin{equation*}
    c_{*,\lambda}=\inf_{\gamma\in\Gamma_\lambda}\max_{t\in [0,1]}J_\lambda(\gamma(t))>0
    \end{equation*}
    be the minimax level of $J_\lambda$, where $\Gamma_\lambda=\{\gamma\in C\big([0,1],H^1_0(\Omega,\mathbb{R}^2)\big):\gamma(0)=0,\gamma(1)=E_0\}$, where $E_0$ is given in Lemma \ref{lemgc2} and here we choose $\|E_0\|>\max\{\rho_0,\rho_2\}$. Therefore, by using the Mountain-pass theorem, there exists a sequence $\{W^*_n\} \subset H^1_0(\Omega,\mathbb{R}^2)$ satisfying
    \begin{equation}\label{7.2}
    J_\lambda(W^*_n)\rightarrow c_{*,\lambda}\ \ \mbox{and}\ \ J_\lambda'(W^*_n)\rightarrow 0.
    \end{equation}

\noindent{\bfseries Proof of Theorem \ref{thmscerm}:} Similarly to the proof of Lemma \ref{lem5.1}, by using the subcritical growth condition $(TM)^{sc}$, we can get that the functional $J_\lambda$ satisfies the Palais-Smale condition. Then, similarly to the proof of Theorem \ref{thmsesl}, by (\ref{7.1}), (\ref{7.2}), using Minimax principle, there exist two nontrivial critical points $W_{0,\lambda},\ W_{*,\lambda}$ for $J_\lambda$ at the levels of $c_{0,\lambda}$ and $c_{*,\lambda}$, respectively. By compactness condition and the definitions of $c_{0,\lambda}$ and $c_{*,\lambda}$, it is easy to verify that $W_{0,\lambda},\ W_{*,\lambda}$ are different. Thus, the proof of Theorem \ref{thmscerm} is complete.
    \qed

\noindent{\bfseries Proof of Theorem \ref{thmcerm}:} From (\ref{7.1}) and Lemma \ref{lemgc3}, there exists a sequence $\{W^0_n\}$ in $\overline{B}_{\rho_1}$ such that
    \begin{align*}
    J_\lambda(W^0_n)\rightarrow c_{0,\lambda}=\inf_{\|W\|\leq \rho_1}J_\lambda(W)<0\ \ \mbox{and}\ \ J_\lambda'(W^0_n)\rightarrow 0,
    \end{align*}
    where $\rho^2_1\leq \min\{1,\frac{\pi}{\beta_0}\}$ is given in Lemma \ref{lemgc3}. Thus $\|W^0_n\|^2 \leq \min\{1,\frac{\pi}{\beta_0}\}<\frac{4\pi}{\beta_0}$ for all $n\in\mathbb{N}$. Then by Lemma \ref{ms1}, the functional $J_\lambda$ satisfies the Palais-Smale condition at level $c_{0,\lambda}$. Thus, we can get one nontrivial critical point $W_{0,\lambda}$ for $J_\lambda$ at level $c_{0,\lambda}$ and $J_\lambda(W_{0,\lambda})=c_{0,\lambda}<0$. Therefore, the proof of Theorem \ref{thmcerm} is complete.
    \qed

\bigskip

\noindent{\bfseries Acknowledgements:}

The authors have been supported by National Nature Science Foundation of China 11971392, Natural Science Foundation of Chongqing, China cstc2019jcyjjqX0022  and Fundamental Research
Funds for the Central Universities XDJK2019TY001.

    \end{document}